\newcommand{\Z}{{\mathbb{Z}}}
\newcommand{\Q}{{\mathbb{Q}}} 
\newcommand{\N}{{\mathbb{N}}}    
\newcommand{\R}{{\mathbb{R}}}    
\newcommand{\C}{{\mathbb{C}}}    
\newcommand{\M}{{\mathcal{M}}}
\newcommand{\p}{{\mathfrak{p}}}
\newcommand{\RR}{{\mathcal{R}}}
\newcommand{\A}{{\mathcal{A}}}
\newcommand{\T}{{\mathcal{T}}}
\newcommand{\x}{{\mathcal{X}}}
\newcommand{\OO}{{\mathcal{O}}}
\newcommand{\surj}{\twoheadrightarrow}
\newcommand{\lra}{\longrightarrow}
\newcommand{\Gal}{\mathrm{Gal}}
\newcommand{\Hom}{\mathrm{Hom}}
\newcommand{\cyc}{\mathrm{cyc}}
\newcommand{\n}{\noindent}
\newcommand{\La}{\Lambda}
\theoremstyle{plain}
\newtheorem{theorem}{Theorem}[section]
\newtheorem*{theorem*}{Theorem}
\newtheorem{proposition}[theorem]{Proposition}
\newtheorem{rem}[theorem]{Remark}
\newtheorem{lemma}[theorem]{Lemma}
\newtheorem{corollary}[theorem]{Corollary}
\newtheorem{defn}[theorem]{Definition}
\newtheorem{hypothesis}[theorem]{Hypothesis}
\begin{document}
\title{Functional equation  for the Selmer group of  nearly ordinary Hida deformation of Hilbert modular forms.} 
\author{Somnath Jha, Dipramit Majumdar} \address{Somnath Jha, School of Physcial Sciences, Jawaharlal Nehru University, New Delhi - 110067, India}\email{jhasomnath@gmail.com} \address{Dipramit Majumdar, Indian Institute of Science Education and Research Pune, Pashan, Pune 411008, Maharashtra, India} \email{dipramit@gmail.com}

\maketitle

\begin{abstract} {

We establish a duality result proving the `functional equation' of the characteristic ideal of the Selmer group associated to a nearly ordinary Hilbert modular form over the cyclotomic $\Z_{p}$ extension of a totally real number field. 
Further, we use this result to establish a duality or algebraic `functional equation'   for the `big' Selmer groups associated to the corresponding nearly ordinary Hida deformation.  The multivariable cyclotomic Iwasawa main conjecture for nearly ordinary Hida family of Hilbert modular forms is not established  yet and this can be thought of as an evidence to the validity of this Iwasawa main conjecture. 
We  also prove a functional equation for the `big'  Selmer group associated to an ordinary Hida family of elliptic modular forms over the $\Z_{p}^{2}$ extension of an  imaginary quadratic field. 

}\end{abstract}

\tableofcontents
\section*{Introduction}
 We fix an odd rational prime $p$ and $N$ a natural number prime to $p$. Throughout, we fix an embedding $\iota_\infty$ of a fixed algebraic closure $\bar{\Q}$ of $\Q$  into $\C$ and also an embedding $\iota_l$ of $\bar{\Q}$ into a fixed algebraic closure ${\bar{\Q}}_\ell$ of the  field $\Q_\ell$ of the $\ell$-adic numbers, for every prime $\ell$. Let $F$ denote a totally real number field and $K$ denote an  imaginary quadratic field. For any number field $L$, $S_{L}$ will denote a finite set of places of $L$ containing the primes dividing $Np$. The  cyclotomic $\Z_{p}$ extension of $L$ will be denoted by $L_{\cyc}$ and   the unique $\Z_{p}^{2}$ extension of $K$ will be denoted by $K_\infty$. Set $\Gamma := \text{Gal}(L_{\cyc}/L) \cong \Z_{p}$ and $\Gamma_K := \text{Gal}(K_{\infty}/K) \cong \Z_{p}^{2}$. Let $B$ be a  commutative, complete, noetherian, normal, local ring of characteristic $0$ with finite residue field of characteristic  $p$. We will denote by $B[[ \Gamma ]]$ (resp. $B[[ \Gamma_{K}]]$) the Iwasawa algebra of $\Gamma$ (resp. $\Gamma_{K}$) with coefficient in $B$. Let $M$ be a finitely generated torsion $B[[\Gamma]]$ (resp. $B[[\Gamma_{K}]]$) module. Then $M^{\iota}$ denote the $B[[\Gamma]]$ (resp. $B[[\Gamma_{K}]]$) module whose underlying abelian group is the same as $M$ but the $\Gamma$ (resp. $\Gamma_{K}$) action is changed via the involution sending $\gamma \mapsto \gamma^{-1}$ for every $\gamma \in \Gamma$ (resp. $\gamma \in \Gamma_{K}$). We denote the characteristic ideal of $M$ in $B[[\Gamma]]$ (resp. $B[[\Gamma_{K}]]$)) by $Ch_{B[[\Gamma]]} (M)$ (resp. $Ch_{B[[\Gamma_{K}]]} (M)$). The main results of the article is the following theorem.

\begin{theorem*}[Theorem \ref{mnm} and Theorem \ref{mnmm}]\label{THMB}
 Let $\mathcal{R}$ be a branch of Hida's universal nearly ordinary Hecke algebra associated to nearly ordinary Hilbert modular forms (resp. Hida's universal ordinary Hecke algebra associated to ordinary elliptic modular forms). Let $\T_\mathcal R$ be a $G_{F}$ (resp. $G_{\Q}$) invariant lattice associated to `big' Galois representation $(\rho_{\mathcal{R}}, \mathcal{V}_{\mathcal{R}})$ and set  $\T_\RR^{\ast} : = \text{Hom}_{\RR}(\T_\RR, \RR(1))$. Let $\mathcal{X}(\T_\mathcal R/F_{\cyc})$ (resp.  $\mathcal{X}(\T_\mathcal R/K_{\infty})$) denote the dual Selmer group of $\mathcal{R}$ over $F_{\cyc}$ (resp. $K_{\infty}$). Then under certain conditions, as ideals in $\mathcal{R}[[\Gamma]]$ (resp. $\mathcal{R}[[\Gamma_{K}]]$), we have the equality  
$$Ch_{\mathcal{R}[[\Gamma]]}(\mathcal{X}(\T_\mathcal R/F_{\cyc})) = Ch_{\mathcal{R}[[\Gamma]]}(\mathcal{X}(\T_\mathcal R^{\ast}/F_{\cyc})^{\iota})$$
 (resp.
  $$Ch_{\mathcal{R}[[\Gamma_{K}]]}(\mathcal{X}(\T_\mathcal R/K_{\infty})) = Ch_{\mathcal{R}[[\Gamma_{K}]]}(\mathcal{X}(\T^{\ast}_\mathcal R/K_{\infty})^{\iota}) .) \qed$$ 
 \end{theorem*}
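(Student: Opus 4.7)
The plan is to deduce the ``big'' functional equation from the corresponding statement for the Selmer group attached to an individual nearly ordinary Hilbert modular form (respectively elliptic modular form), via a specialization-and-density argument — a strategy already foreshadowed by the abstract (``we use this result to establish a duality \ldots for the `big' Selmer groups'').

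\textbf{Step 1 (individual-form functional equation).} For each arithmetic specialization $\mathfrak{P} \subset \RR$ corresponding to a classical form $f_\mathfrak{P}$ with lattice $T_{f_\mathfrak{P}} := \T_\RR/\mathfrak{P}\T_\RR$, first establish the characteristic-ideal equality
$Ch_{\OO_\mathfrak{P}[[\Gamma]]}(\mathcal{X}(T_{f_\mathfrak{P}}/F_{\cyc})) = Ch_{\OO_\mathfrak{P}[[\Gamma]]}(\mathcal{X}(T_{f_\mathfrak{P}}^{\ast}/F_{\cyc})^{\iota})$, where $\OO_\mathfrak{P} := \RR/\mathfrak{P}$, together with its $K_\infty$-analogue. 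The inputs are global Poitou--Tate duality in Iwasawa cohomology and Greenberg's formulation of the Selmer condition through the nearly ordinary filtration at primes above $p$; the involution $\iota$ emerges from the cup-product pairing after passing to the inverse limit along $F_{\cyc}/F$ (resp.\ $K_\infty/K$).

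\textbf{Step 2 (control theorem).} Show that the natural specialization map
$\mathcal{X}(\T_\RR/F_{\cyc})/\mathfrak{P}\,\mathcal{X}(\T_\RR/F_{\cyc}) \lra \mathcal{X}(T_{f_\mathfrak{P}}/F_{\cyc})$
and the analogous map for $\T_\RR^{\ast}$ have pseudo-null kernel and cokernel as $\OO_\mathfrak{P}[[\Gamma]]$-modules. Standard ingredients: freeness of $\T_\RR$ over $\RR$ from Hida theory, the good behavior of the nearly ordinary local condition at primes above $p$ under specialization at $\mathfrak{P}$, and vanishing of the relevant $H^0$ terms arising from residual irreducibility (these being the ``certain conditions'' in the theorem statement).

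\textbf{Step 3 (density).} Combining Steps 1 and 2, for every arithmetic prime $\mathfrak{P}$ of $\RR$ the ideals $Ch_{\RR[[\Gamma]]}(\mathcal{X}(\T_\RR/F_{\cyc}))$ and $Ch_{\RR[[\Gamma]]}(\mathcal{X}(\T_\RR^{\ast}/F_{\cyc})^{\iota})$ have the same image in $\OO_\mathfrak{P}[[\Gamma]]$. Since arithmetic primes are Zariski-dense in $\mathrm{Spec}(\RR)$ and $\RR[[\Gamma]]$ is a Krull domain, comparing divisors at the height-one primes of $\RR[[\Gamma]]$ lying above each such $\mathfrak{P}$ forces the two characteristic ideals to coincide as elements of the divisor group, hence as ideals. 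The $K_\infty$-case runs in parallel over $\RR[[\Gamma_K]]$.

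The principal obstacle is Step 2: one must verify that the cokernel of specialization is \emph{pseudo-null}, not merely finite, over the residue ring $\OO_\mathfrak{P}[[\Gamma]]$, and this demands delicate control of local Iwasawa cohomology and of the deformation of the nearly ordinary filtration under $\mathfrak{P}$. The $\Z_p^2$-setting over $K_\infty$ is strictly harder, since the ambient two-variable Iwasawa algebra $\RR[[\Gamma_K]]$ has Krull dimension one greater, pseudo-null submodules of the big Selmer are harder to rule out, and an analogue of Greenberg's conjecture on the absence of nontrivial pseudo-null submodules is essentially required for the density argument to faithfully transfer specialization-level equalities back to $\RR[[\Gamma_K]]$.
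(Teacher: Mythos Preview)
Your three-step outline matches the paper's strategy: functional equation at each arithmetic fibre (Theorem~\ref{fefibre}, Theorem~\ref{fefibrem}), a specialization/control result (Theorem~\ref{spl}, Proposition~\ref{splm}), and a lifting step. However, there is a genuine gap in Step~3 and a misdiagnosis in your final paragraph.

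In Step~3 you assert that Steps~1 and~2 together give that $Ch_{\RR[[\Gamma]]}(\mathcal X(\T_\RR/F_\cyc))$ and $Ch_{\RR[[\Gamma]]}(\mathcal X(\T_\RR^\ast/F_\cyc)^\iota)$ have the same image in $\OO_\mathfrak{P}[[\Gamma]]$. But Step~2 only tells you that $Ch_{\OO_\mathfrak{P}[[\Gamma]]}\bigl(\mathcal X(\T_\RR/F_\cyc)/\mathfrak{P}\bigr)$ equals $Ch_{\OO_\mathfrak{P}[[\Gamma]]}(\mathcal X(T_{f_\mathfrak{P}}/F_\cyc))$; it does \emph{not} tell you that the image of $Ch_{\RR[[\Gamma]]}(\mathcal X(\T_\RR/F_\cyc))$ under reduction mod $\mathfrak{P}$ coincides with $Ch_{\OO_\mathfrak{P}[[\Gamma]]}\bigl(\mathcal X(\T_\RR/F_\cyc)/\mathfrak{P}\bigr)$. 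The obstruction to this compatibility is precisely the maximal pseudonull $\RR[[\Gamma]]$-submodule $\mathcal X^0$: one needs $\mathcal X^0/\mathfrak{P}\mathcal X^0$ to be pseudonull over $\OO_\mathfrak{P}[[\Gamma]]$ for every arithmetic $\mathfrak{P}$. You correctly flag this as the crux, but your proposed resolution --- assuming an analogue of Greenberg's conjecture that $\mathcal X^0=0$ --- is both unproven and unnecessary. The paper instead proves directly (Propositions~\ref{bigp} and~\ref{bigpm}) that $\mathcal X(\T_\RR/F_\cyc)[\mathfrak{P}]=0$ for every arithmetic $\mathfrak{P}$, by exploiting surjectivity of the global-to-local map defining the strict Selmer group (itself a consequence of the torsion hypothesis); this forces $\mathcal X^0[\mathfrak{P}]=0$, hence $\mathcal X^0/\mathfrak{P}\mathcal X^0$ is pseudonull, without ever assuming $\mathcal X^0$ vanishes. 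The paper explicitly notes that its proof allows $\mathcal X^0\neq 0$.

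Separately, your Zariski-density argument in Step~3 is too loose: knowing that two principal ideals of $\RR[[\Gamma]]$ agree modulo infinitely many height-$r$ primes $\mathfrak{P}$ of $\RR$ does not immediately force equality at every height-one prime of $\RR[[\Gamma]]$. The paper makes this step precise via a concrete lifting lemma (Proposition~\ref{lift}, generalizing Ochiai) which uses the structure theorem and an inverse-limit argument along products of the $\mathfrak{P}$'s to conclude.
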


 
Theorem \ref{mnm} and Theorem \ref{mnmm} generalizes \cite[Theorem 5.2]{jp}. An ingredient in the  proof of Theorem \ref{mnm} is Theorem \ref{fefibre}, where we prove a functional equation for the Selmer group of a single Hilbert modular form. Theorem \ref{fefibre} is a variant of \cite[Theorem 2]{gr} and \cite[Theorem 4.2.1]{pr}. 
The motivation for  the above theorems  is explained below. 

Let $E$ be an elliptic curve defined over $\Q$ with good ordinary reduction at an odd prime $p$. Let $L_E$ be the complex $L$ function of $E$. Then by modularity theorem, $E$ is modular and $L_E$ coincides with the $L$-function of a weight $2$ newform of level $N_E$, where $N_E$ is the conductor of $E$. Moreover, $L_E$  has analytic continuation to all of $\C$ and  if we set  $$\Lambda_E(s): = N_E^{s/2}(2\pi)^{-s}\Gamma(s)L_E(s)$$ to be the completed $L$-function of $E$, where $\Gamma(s)$ is the usual $\Gamma$ function, then (due to Hecke) 
\begin{equation}\label{mot1}
\La_E(2-s) = \pm \La_E(s), s \in \C.
\end{equation}  Thus $L_E$ satisfies a functional equation connecting values at $s$ and $2-s$, where $s \in \C$. Let $\phi$ vary over the Dirichlet characters of  $\Gamma =\text{Gal}(\Q_\cyc/\Q)$ i.e. $\phi \in \hat{\Gamma}$, then the twisted $L$-function $L_{E}(s,\phi)$, will also satisfy a functional equation similar to \eqref{mot1} connecting the values of $L_{E}(2-s,\phi)$ and $L_{E}(s,\phi^{-1})$. In particular, $L_{E}(1,\phi)$ and $L_{E}(1,\phi^{-1})$ are related by a functional equation.

Now, by the work of Mazur and Swinnerton-Dyer \cite{msd}, the $p$-adic $L$-function of $E$ exists. Let $g_E(T)$ be the power series representation of the $p$-adic $L$-function of $E$ in $\Z_p[[\Gamma]]\otimes \Q_p \cong \Z_p[[T]] \otimes \Q_p$. Then  we have, 
 \begin{itemize}
 \item  $g_E(0) = (1-\alpha_p^{-1})^2L_E(1)/{\Omega_E}$,
 \item  $g_E(\phi(T) - 1 ) = \frac{L_E(1,\phi) {\beta_p}^n}{\tau(\phi)\Omega_E}$, for a  character $\phi$ of $\Gamma$ of order $p^n \geq 1$. Here $\alpha_p +\beta_p = a_p$, $\alpha_p \beta_p =p$ with $p  \nmid  \alpha_p$, $\Omega_E$ is the real period of $E$ and  $\tau(\phi)$ is the Gauss sum of $\phi$.
 \end{itemize}
We assume that $g_E \in \Z_p[[\Gamma]]$. Then using the above interpolation properties, we can deduce from \eqref{mot1}, a  functional equation for the $p$-adic $L$-function (also proven  in \cite{mtt}), given by  
\begin{equation}\label{mot2}
 g_E(T)  = u_Eg_E(\frac{1}{1+T} -1),
 \end{equation}
where $u_E$ is a unit in the ring $\Z_p[[\Gamma]] \cong \Z_p[[T]]$. In other words, we have an equality of ideals in $\Z_p[[\Gamma]]$,

\begin{equation}\label{mot3}
 (g_E(T))  = (g_E(\frac{1}{1+T} -1)).
 \end{equation}
 
 By the  cyclotomic Iwasawa main conjecture for $E$ (cf. \cite[$\S$ 3.5]{es}), the $p$-adic  $L$-function $g_E(T) $ is a characteristic ideal of the (dual) $p^\infty$-Selmer group  $X(T_pE/\Q_\cyc)$. Thus, we should get 
 \begin{equation}\label{mot4}
 Ch_{\Z_p[[\Gamma]]}(X(T_pE/{\Q_\cyc})) = Ch_{\Z_p[[\Gamma]]}(X(T_pE^*/{\Q_\cyc})^\iota),
 \end{equation}
   as ideals in the Iwasawa algebra $\Z_p[[\Gamma]]$. (Notice that by Weil pairing, $T_pE^* \cong T_pE$). 
   
   Indeed, under certain assumptions, \eqref{mot4} is a corollary of the main conjecture of Iwasawa theory for elliptic curves (proven in  \cite[Corollary 3.34]{es}, also see \cite{ka}) together with the fact that the $p$-adic $L$-function $g_E$ satisfies  functional equation \eqref{mot3}. However, for any elliptic curve $E$ defined over $\Q$ which is ordinary at $p$, \eqref{mot4} was already proven  by \cite[Theorem 2]{gr} (and also independently in \cite[Theorem 4.2.1]{pr}) purely algebraically and without assuming the existence of $g_E$. The proofs uses  duality  and pairing in cohomology, (like the Poitou-Tate duality, generalized Cassels-Tate pairing of Flach) among other tools.

Now for any compatible system of $l$-adic representations associated to a motive, a complex $L$-function is  defined and  one can think of similar questions.   For example, for a  normalized cuspidal Hilbert eigenform $f$, which is nearly ordinary at primes $\mathfrak p$ of a totally real number field $F$ dividing the prime  $ p$,  one can associate a compatible system of $l$-adic representation. Furthermore, one can  define a Selmer group $X(T_f/{F_\cyc})$ using the  Galois representation of the Galois group $G_F:=\Gal (\bar{F}/F)$. Under suitable conditions (for example, non-critical slope), the $p$-adic $L$-function for $f$, which interpolates the complex $L$-function, exists (see \cite{di}, also see \cite{ba}); and a precise  Iwasawa main conjecture for $f$ over $F_\cyc$ can also be formulated (for example, see \cite{xw}). However, this   cyclotomic Iwasawa main conjecture for $f$  is not proven yet. In Theorem \ref{fefibre}, we prove the functional equation for the characteristic ideal of the Selmer group of $f$ i.e.

\begin{equation}\label{mot5}
 Ch_{O_f[[\Gamma]]}(X(T_f/{F_\cyc})) = Ch_{O_f[[\Gamma]]}(X(T^*_f/{F_\cyc})^\iota),  
 \end{equation}
algebraically (without assuming the existence of the $p$-adic $L$-function or the Iwasawa main conjecture of $f$). Thus, Theorem \ref{fefibre} can be thought of as a modest evidence towards the validity for the cyclotomic Iwasawa main conjecture for $f$.

   Now, let us consider the nearly ordinary Hida deformation of Hilbert modular forms.  A   `several variable' $p$-adic $L$-function  (say $\mathcal L^p$) associated to (a branch $\RR$ of the) nearly ordinary Hida family ${\mathbf H}_{\mathcal N, O}$ over $F_\cyc$   will interpolate  the special values of the complex $L$-functions of the various individual nearly ordinary normalized cuspidal  Hilbert  eigenforms lying in the family (cf.  \cite{o4}, \cite{di}). Hence,  $\mathcal L^p$  should also satisfy a functional equation. Again, by the `several variable'   Iwasawa  main conjecture over $F_\cyc$ for a nearly  ordinary Hida family ${\mathbf H}_{\mathcal N, O}$ of Hilbert modular forms (cf. \cite{xw}), $\mathcal L_{p}$ should be a characteristic ideal of the `big' Selmer group  $\mathcal X(\mathcal T_\RR/F_\cyc)$ of the branch $\RR$ of the nearly ordinary Hida family. Thus, we would expect to  get a  `functional equation' stating  
   \begin{equation}\label{mot6}
   Ch_{R[[\Gamma]]}(\mathcal X(\mathcal T_\RR/F_\cyc))= Ch_{R[[\Gamma]]}(\mathcal X(\T_\RR^*/F_\cyc)^\iota)
   \end{equation} where $\Gamma = \text{Gal}(F_\cyc/F)$.  Again,   we will prove this fact algebraically (without any assumption on the analytic side) in Theorem \ref{mnm} and thus, in turn,   this can be thought of as a modest evidence for the validity of the Iwasawa main conjecture for the nearly ordinary Hida deformation of Hilbert modular forms over $F_\cyc$.

   From an purely algebraic point of view of duality and pairing in cohomology, Theorem \ref{mnm}  can also be thought as a direct generalization of  result of \cite[Theorem 2]{gr} in the nearly ordinary Hida deformation setting.

 
 An entirely  parallel argument  as above,  in the setting of ordinary Hida deformation of   elliptic modular forms over the $\Z_p^2$ extension of an imaginary quadratic field $K$,   works  as the motivation for Theorem \ref{mnmm}. (Functional equation for elliptic modular forms over the cyclotomic $\Z_p$ extension of $K$ was discussed in \cite[Theorem 5.2]{jp}.) In this case though, we would like to stress that  for an imaginary quadratic field $K$,  under certain hypotheses,   the three variable   Iwasawa main conjecture over $K_\infty$ for an ordinary Hida family of elliptic modular form has been  proven in \cite[$\S$ 3.6.3]{es}. Indeed,  in  that article   a suitable three variable $p$-adic $L$-function  for the ordinary Hida family has been constructed (\cite[$\S$ 3.4.5]{es}). It is known  that this 3 variable $p$-adic $L$-function  should satisfy a functional equation.     Thus, at least  in principle, the work of \cite{es}  should also establish the equality  
 $$Ch_{\mathcal R[[\Gamma_K]]}(\mathcal X(T_\RR/K_\infty)) = Ch_{\mathcal R[[\Gamma_K]]}(\mathcal X(T^*_\RR/K_\infty)^\iota) $$ 
 in $\mathcal R[[\Gamma_K]]$.  However,   we would like to mention that  our proof of the functional equation for the Selmer group $\mathcal X(\T_\mathcal R/K_\infty)$ (Theorem \ref{mnmm}) is simple  and we do  not need to make use of the vast tools  involved in the proof of  the 3 variable main conjecture of \cite{es}.  Moreover,  in Theorem \ref{mnmm}, we do not need some of the hypotheses of the proof of the main conjecture (see \cite[$\S$ 3.6]{es}).

  The key idea of the proof of  Theorem \ref{fefibre} is to use generalized Cassels-Tate pairing of Flach along with a ``control theorem'' (Theorem \ref{ctcyc}).  The central idea of the proof of Theorem \ref{mnm} (and Theorem \ref{mnmm}) can be explained in three steps. First, we show that for infinitely many arithmetic points the specialization map is a pseudo-isomorphism. Secondly, we use the fact that functional equation holds at the fibre  for infinitely many arithmetic specialization. Finally, we use some suitable lifting techniques, generalization of results of  \cite{o3}, to obtain  our results. This gives a simple proof of the desired functional equation of the `big' Selmer group of the nearly ordinary Hida family. 

 
 The structure of the article is as follows. In section 1, we discuss some preliminary results in two parts. In subsection \ref{sec1}, we discuss preliminaries related to the Hida deformation for nearly ordinary Hilbert modular forms and ordinary elliptic modular forms, only to the extent  which we need in this article.  In  subsection \ref{sec2}, we define various Selmer group involved.  In section \ref{sec3}, we prove a control theorem for Hilbert modular form and deduce Theorem \ref{fefibre}. In section  \ref{sec4}, we discuss the specialization results connecting the `big' Selmer groups with the Selmer groups of the individual Hilbert modular forms at the fibres and prove the main theorem in the Hilbert modular form  case (Theorem \ref{mnm}). We prove  the second version of the main theorem for elliptic modular forms over $\Z_p^2$ extension  in section \ref{sec6}  (Theorem  \ref{mnmm}).

 \n {\bf Acknowledgment} :
We would like to thank T. Ochiai, A. Raghuram,  B. Baskar, R. Greenberg, A. Pal and R. Sujatha for discussions. During this project, first named author was initially supported by JSPS postdoctoral fellowship and later by DST Inspire Faculty Award grant. The second author was initially supported by ISI  postdoctoral  fellowship and later by IISER Pune postdoctoral fellowship. 
\section{Preliminaries}
\subsection{Preliminary results on Hilbert Modular Forms}\label{sec1}
\subsubsection{Hilbert Modular Forms and Nearly Ordinary Hecke Algebra}
In this subsection, we collect some basic results about  nearly ordinary Hida deformation  of Hilbert modular forms and ordinary Hida deformation of elliptic modular forms, which are needed in the course of this article. All the results in this section are  well known and can be found in the literature (cf.  \cite{h1}, \cite{h2}, \cite{wi}, \cite{wi2}). Our presentation of results in this subsection, in many cases, follows the presentation of \cite{fo}.

 Let $p$ be an odd prime. Let $F$ be a totally real number field of degree $d$, $\OO_{F}$ be the ring of integers of $F$, and $J_{F}$ denotes the set of embedding of $F$ into $\R$. To an ideal $\M$ of $\OO_{F}$, we attach standard compact open subgroups $K_{0},K_{1}$ and $K_{11}$ of $Gl_{2}(\OO_{F}\otimes_{\Z} \hat{\Z})$ as follows:
$$K_{0}(\M)= \Big\{ \begin{pmatrix}
                              a & b\\
                               c & d
                              \end{pmatrix} \in Gl_{2}(\OO_{F}\otimes_{\Z} \hat{\Z}) |  c \equiv 0 ~(mod  ~ \M)\Big\}$$

$$K_{1}(\M)= \Big\{ \begin{pmatrix}
                              a & b\\
                               c & d
                              \end{pmatrix} \in K_{0}(\M) |  d \equiv 1 ~(mod ~ \M)\Big\}$$
                              
$$K_{11}(\M)= \Big\{ \begin{pmatrix}
                              a & b\\
                               c & d
                              \end{pmatrix} \in K_{0}(\M) |  a,d \equiv 1 ~(mod ~ \M)\Big\}.$$

\begin{defn}
A weight $k= \sum_{\tau \in J_{F}} k_{\tau} \tau$ is an element of $\Z[J_{F}]$, an arithmetic weight is a weight such that $k_{\tau} \geq 2$ for all $\tau \in J_{F}$ and $k_{\tau}$ has constant parity. A parallel weight is an integral multiple of the weight $t = \sum_{\tau \in J_{F}} \tau$. Two weights are said to be equivalent if their deference is a parallel weight. To an arithmetic weight $k$, one associates a weight $v \in \Z[J_{F}]$, called the parallel defect of $k$, which satisfies $k+2v \in \Z t$.
\end{defn}

Let $\OO$ be the ring of integers of a finite extension of $\Q_{p} $ which contains all conjugates of $F$. For $k$ an arithmetic weight and $v$ its parallel defect, $S_{k,w}(U;\OO)$ denotes the holomorphic cusp forms of weight $(k,w)$ of level $U$ and coefficient in $\OO$, where $U$ is a finite index subgroup of  $Gl_{2}(\OO_{F}\otimes_{\Z} \hat{\Z})$ containing $K_{11}(\M)$ for some $\M \subset \OO_{F}$, and $w =k+v-t$. A cuspidal Hilbert modular form  $S_{k,w}(U;\OO)$ is called primitive if it is not a Hilbert modular form of weight $(k,w)$ and of level smaller than $U$. A normalized primitive Hilbert eigenform is called a Hilbert newform. To $f \in S_{k,w}(U;\OO)$, one can naturally associate an automorphic representation $\pi_{f} \in Gl_{2}(\mathbb{A}_{F})$. 

Fix an ideal $\mathcal{N}$ of $F$ which is prime to p and for any $s \in \N$, we have an action of $G = (\OO_{F}\otimes_{\Z} \Z_{p})^{\ast} \times ((\OO_{F}\otimes_{\Z} \Z_{p})^{\ast}/\bar{\OO_{F}^{\ast}})$ on $S_{k,w}(K_{1}(\mathcal{N}) \cap K_{11}(p^{s});\OO)$. We have an action of the $p$-Hecke operator $T_{0}(p)$, normalized according to the parallel defect $v$, on the space $S_{k,w}(K_{1}(\mathcal{N}) \cap K_{11}(p^{s});\OO)$. The largest $\OO$ submodule of $S_{k,w}(K_{1}(\mathcal{N}) \cap K_{11}(p^{s});\OO)$ on which $T_{0}(p)$ acts invertibly is denoted by $S_{k,w}^{n.o.}(K_{1}(\mathcal{N}) \cap K_{11}(p^{s});\OO)$. A form $f \in S_{k,w}(K_{1}(\mathcal{N}) \cap K_{11}(p^{s});\OO)$ is called nearly ordinary if $f \in S_{k,w}^{n.o.}(K_{1}(\mathcal{N}) \cap K_{11}(p^{s});\OO)$.

The nearly ordinary Hecke algebra ${\bf H}_{k,w}(K_{1}(\mathcal{N}) \cap K_{11}(p^{s});\OO)$ of weight $(k,w)$ and level $K_{1}(\mathcal{N}) \cap K_{11}(p^{s})$ is defined to be the $\OO$ subalgebra of  $End_{\OO}(S_{k,w}^{n.o.}(K_{1}(\mathcal{N}) \cap K_{11}(p^{s});\OO))$ generated by the Hecke operators. The $\OO$ algebra ${\bf H}_{k,w}(K_{1}(\mathcal{N}) \cap K_{11}(p^{s});\OO)$ is finite flat over $\OO$.

Let $k$ be an arithmetic weight and $v$ be its parallel defect. By the perfect duality between ${\bf H}_{k,w}(K_{1}(\mathcal{N}) \cap K_{11}(p^{s});\OO)$ and $S_{k,w}^{n.o.}(K_{1}(\mathcal{N}) \cap K_{11}(p^{s});\OO)$, giving an eigen cuspform $f \in S_{k,w}^{n.o.}(K_{1}(\mathcal{N}) \cap K_{11}(p^{s});\OO)$ is equivalent to giving an algebra homomorphism 
$$q_{f}: {\bf H}_{k,w}(K_{1}(\mathcal{N}) \cap K_{11}(p^{s});\OO) \surj {\bf H}_{k,w}(K_{1}(\mathcal{N}p^{s}) ;\OO) \to \bar{\Q}_{p}$$
sending $T \in {\bf H}_{k,w}(K_{1}(\mathcal{N}p^{s}) ;\OO)$ to $a_{1}(f|T)$.

Let $\Lambda_{\OO}$ denote the completed group algebra $\OO[G/G_{tors}]$. The algebra $\Lambda_{\OO}$ is non-canonically isomorphic to the power series algebra $\OO[[X_{1},\cdots,X_{r}]]$, where $r=1+d+\delta_{F,p}$, $\delta_{F,p}$ be the defect of the Leopoldt's conjecture for $F$ at $p$.

Let the nearly ordinary Hecke algebra ${\bf H}_{\mathcal{N},\OO}$ be the inverse limit w.r.t. $s$ of the ${\bf H}_{2t,0}(K_{1}(\mathcal{N}) \cap K_{11}(p^{s});\OO)$. By fundamental work of Hida, for any arithmetic weight $k \in \Z[J_{F}]$, we have $ {\bf H}_{\mathcal{N},\OO} \cong \underset{s}\varprojlim ~{\bf H}_{k,w}(K_{1}(\mathcal{N}) \cap K_{11}(p^{s});\OO)$. The nearly ordinary Hecke algebra ${\bf H}_{\mathcal{N},\OO}$ is finite torsion free $\Lambda_{\OO}$ module and hence a semi-local ring. Let $\mathfrak{a}$ be one of the finitely many ideals of height zero in ${\bf H}_{\mathcal{N},\OO}$. The algebra $\RR={\bf H}_{\mathcal{N},\OO}/\mathfrak{a}$ is called a branch of ${\bf H}_{\mathcal{N},\OO}$.

\begin{defn}\label{kerxi}
For a weight $k \in \Z[J_{F}]$, an algebraic character $\xi: G=(\OO_{F}\otimes_{\Z} \Z_{p})^{\ast} \times ((\OO_{F}\otimes_{\Z} \Z_{p})^{\ast}/\bar{\OO_{F}^{\ast}}) \to \bar{\Q}_{p}^{\ast}$ of weight $(k,w)$ is a character of the form, $\xi(a,z) = \psi(a,z)\chi_{\cyc}^{[n+2v]}(z)a^{n}$, where $\psi$ is a character of finite order, $[n+2v]$ is the unique integer satisfying $n+2v = [n+2v]t$ (Recall, $n=k+2t$, $w= k+v-t$).  An algebraic character  of weight $(k,w)$ is an arithmetic character  of weight $(k,w)$ if its restriction to $\OO_{F}^{\ast} \subset (\OO_{F}\otimes_{\Z} \Z_{p})^{\ast}$ is trivial. An algebra homomorphism, $\xi \in \Hom(\Lambda_{\OO},\bar{\Q}_{p})$ is algebraic (resp. arithmetic)
of weight $(k,w)$ if $\xi|_{G}$ is algebraic (resp. arithmetic) of weight $(k,w)$. If $R$ is a finite $\Lambda_{\OO}$ algebra, $\xi \in \Hom(R,\bar{\Q}_{p})$ is algebraic (resp. arithmetic) of weight $(k,w)$ if $\xi|_{\Lambda_{\OO}}$ is algebraic (resp. arithmetic) of weight $(k,w)$.\\
A prime ideal $P_{\xi} \subset R$ which is defined as the kernel of an algebraic (resp. arithmetic) specialization of $\xi$ of $R$ is called an algebraic (resp. arithmetic) point.
\end{defn}

For any $k$ be an arithmetic weight and any of its parallel defect $v$, and for any nearly ordinary eigen cuspform $f \in S_{k,w}^{n.o.}(K_{1}(\mathcal{N}) \cap K_{11}(p^{s});\OO)$ which is new at every prime diving $\mathcal{N}$, there exists a unique branch $\RR $ of ${\bf H}_{\mathcal{N},\OO}$  and a unique arithmetic specialization $\xi_f : \RR \to \bar{\Q}_{p}$ of weight $(k,w)$ such that $\xi_f (\RR)$ is canonically identified with $q_{f}({\bf H}_{k,w}(K_{1}(\mathcal{N}) \cap K_{11}(p^{s});\OO))$.\\
Let $\RR$ be a branch of ${\bf H}_{\mathcal{N},\OO}$. Then for any $k$ be an arithmetic weight and any of its parallel defect $v$, and for any arithmetic specialization $\xi:\RR \to \bar{\Q}_{p}$ of weight $(k,w)$, there exists a unique nearly ordinary eigen cuspform $f_{\xi} \in {\bf H}_{k,w}(K_{1}(\mathcal{N}p^{s});\OO)$ for some $s$, such that $\xi(\RR)$ is canonically identified with $q_{f_{\xi}}({\bf H}_{k,w}(K_{1}(\mathcal{N}) \cap K_{11}(p^{s});\OO))$.  We will donate the set of arithmetic points of $\RR$ by $\mathfrak X (\RR)$. For each $\xi \in \mathfrak X (\RR)$,  $ P_\xi = \text{ker}(\xi)$ is a coheight 1 prime ideal in $\RR$.
\subsubsection{Galois Representation}

Galois representation associated to a Hilbert modular eigen cuspform was constructed and studied by Carayol,  Ohta, Wiles, Taylor and Blasius-Rogawski. We briefly recall their results in the following two theorems.

\begin{theorem}\label{1}
Let $f \in S_{k,w}(K_{1}(\M); \bar{\Q}_{p})$ be a normalized eigen cuspform of arithmetic weight $k$, and let $K$ be a finite extension of $\Q_{p}$ containing all Hecke eigenvalues for $f$. Then there exists a continuous irreducible $G_{F}$ representation $V_{f} \cong K^{\oplus 2}$, which is unramified outside $\M p$ and satisfies
$$ det(1- \text{Fr}_{\lambda} X|V_{f}) = 1- T_{\lambda} (f) X + S_{\lambda}(f) X^2 $$
for all $\lambda \nmid \M p $, where $T_{\lambda}$ (resp. $S_{\lambda}$) is the Hecke operator induced by the coset class $K_{1}(\M) \begin{pmatrix} 
                                          1 & 0 \\
                                          0 & \pi_{\lambda}
                                          \end{pmatrix} K_{1}(\M)$ (resp. $K_{1}(\M) \begin{pmatrix} 
                                          \pi_{\lambda} & 0 \\
                                          0 & \pi_{\lambda}
                                          \end{pmatrix} K_{1}(\M)$), where $\pi_{\lambda}$ is a uniformizer at $\lambda$ and $Fr_{\lambda}$ is the geometric Frobenius at $\lambda$.
 
 The $G_{F}$ representation $V_{f}$ is known to be irreducible, and hence characterized upto isomorphism by the above equation.                                        
\end{theorem}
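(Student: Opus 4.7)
The plan is to realise $V_f$ geometrically inside the $\ell$-adic \'etale cohomology of a suitable Shimura variety attached to $F$, and then to read off the characteristic polynomial at unramified places from an Eichler--Shimura style congruence relation. The construction naturally splits into two cases, depending on the parity of $d = [F:\Q]$ and the local behaviour of $\pi_f$, so the first step would be to separate the argument and to choose a quaternion algebra $B/F$ tailored to each case via the Jacquet--Langlands correspondence.

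In the principal case, when $d$ is odd or when $\pi_f$ is square-integrable at some finite place, I would take $B/F$ split at exactly one archimedean place (and ramified at a suitable finite set of finite places dictated by Jacquet--Langlands), so that the associated Shimura curve $M_B$ is projective and smooth over $F$ away from $\M p$. Jacquet--Langlands transfers $\pi_f$ to an automorphic representation on $B^{\times}$, whose contribution to $H^1_{\mathrm{\'et}}(M_B \otimes_F \bar F,\, \mathcal{L}_{k,w})$ cuts out a two-dimensional subquotient $V_f$ over $K$, where $\mathcal{L}_{k,w}$ is the $\ell$-adic local system attached to the weight $(k,w)$. Smooth--proper base change gives unramifiedness outside $\M p$, and the Eichler--Shimura congruence at any $\lambda \nmid \M p$ yields the required identity $\det(1 - \mathrm{Fr}_\lambda X \mid V_f) = 1 - T_\lambda(f) X + S_\lambda(f) X^2$, once one tracks the chosen normalisation of $T_\lambda$ and $S_\lambda$ against the parallel defect $v$. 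In the complementary case, with $d$ even and $\pi_f$ principal series at every finite place, no such $B$ is available; here I would fall back on Taylor's congruence method, producing via Hida theory and level raising a sequence of congruent Hilbert eigenforms to which the geometric case applies, and then gluing the associated pseudo-representations. Residual irreducibility of $\bar\rho_f$ for generic $\lambda$ allows the limiting pseudo-representation to be lifted canonically to the desired genuine two-dimensional $V_f$.

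Irreducibility of $V_f$ then follows from the cuspidality of $f$: any decomposition $V_f \cong \chi_1 \oplus \chi_2$ would force $L(s,f) = L(s,\chi_1) L(s,\chi_2)$, exhibiting $f$ as an Eisenstein series and contradicting our hypothesis. The characterisation of $V_f$ up to isomorphism is then immediate from the \v{C}ebotarev density theorem applied to the prescribed characteristic polynomials. The main obstacle I anticipate is the even-degree principal-series case: one must simultaneously arrange absolute irreducibility of $\bar\rho_f$ (so that the pseudo-representation lifts uniquely), coherence of the congruences with the Hecke normalisation tied to the parallel defect $v$, and compatibility of the limit across varying auxiliary levels, so that the resulting $\{\rho_f\}_\lambda$ really is a strictly compatible system with the local factors displayed in the theorem.
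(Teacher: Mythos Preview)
The paper does not give a proof of this theorem at all: it is stated as a known result, attributed in the text immediately preceding it to Carayol, Ohta, Wiles, Taylor and Blasius--Rogawski, and then simply quoted. So there is no ``paper's own proof'' to compare against; your proposal is in fact a sketch of what those authors do.

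As a sketch of the literature, your outline is broadly accurate. The dichotomy you set up---$d$ odd or $\pi_f$ discrete series at some finite place (Carayol, via Shimura curves over $F$ attached to a quaternion algebra split at exactly one archimedean place), versus $d$ even and $\pi_f$ principal series everywhere (Taylor, via congruences and pseudo-representations)---is exactly the standard one. The Eichler--Shimura relation and smooth--proper base change are the right inputs in the geometric case, and \v{C}ebotarev plus irreducibility is the correct uniqueness argument.

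Two small points where your account is imprecise. First, in Taylor's case you do not need residual irreducibility to lift the limiting pseudo-representation: a two-dimensional pseudo-representation over a field always comes from a genuine semisimple representation (this is elementary in dimension two, or one can cite Rouquier/Nyssen). What one actually worries about in that step is checking that the limit pseudo-representation takes values in $K$ rather than some pro-$p$ completion, and that it is continuous. Second, your irreducibility argument is morally right but loosely stated: the clean version is that reducibility would force the Satake parameters of $\pi_f$ at almost all places to be those of an isobaric sum of two Hecke characters, which by strong multiplicity one contradicts cuspidality of $\pi_f$. The alternative route, via the purity of Frobenius eigenvalues (Ramanujan for Hilbert forms, due to Blasius), also works and is perhaps closer in spirit to how the paper uses these representations later.
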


\begin{rem}
Let $f \in S_{k,w}(K_{1}(\M); \bar{\Q}_{p})$ be a normalized eigen cuspform of arithmetic weight $k$, and let $K$ be a finite extension of $\Q_{p}$ containing all Hecke eigenvalues for $f$ as in Theorem \ref{1}. If $\p \mid p$, let $c(\p,f)$ be the $T(\p)$ eigenvalue of $f$. We say $f$ is ordinary at $\p$ if $c(\p,f)$ is a unit in the ring of integers of $K$ and $f$ is ordinary at $p$ if and only if for all $\p \mid p$, $f$ is ordinary at $\p$.
\end{rem}
 Next theorem describes the local properties of the  Galois representation $V_{f}$.

\begin{theorem}\label{2}
Let $f \in S_{k,w}(K_{1}(\M); \bar{\Q}_{p})$ be a normalized eigen cuspform. Let $w_{max} =\text{max} ~\{w_{\tau}, \tau \in J_{F} \}$. Let $V_{f}$ (resp. $\pi_{f}$) be the Galois representation (resp. automorphic representation) associated to $f$.
\begin{enumerate}
\item If $\lambda \nmid p$, then 
 \begin{enumerate}
  \item The inertia group $I_{\lambda}$ at $\lambda$ acts on $V_{f}$ through infinite quotient iff $\pi_{f,\lambda}$ is a Steinberg representation. In this case, $V_{f}$ has a unique filtration by graded pieces of dimension one:
  $$ 0 \to (V_{f})_{\lambda}^{+} \to V_{f} \to (V_{f})_{\lambda}^{-} \to 0 $$
 which is stable under the decomposition group $G_{\lambda}$ at $\lambda$. The inertia group $I_{\lambda}$ acts on $(V_{f})_{\lambda}^{+}$ (resp. $(V_{f})_{\lambda}^{-}$) through a finite quotient of $I_{\lambda}$. An eigenvalue $\alpha$ of the action of a lift of $Fr_{\lambda}$ to $G_{\lambda}$ on $(V_{f})_{\lambda}^{+}$ (resp. $(V_{f})_{\lambda}^{-}$) is an algebraic number satisfying $|\alpha|_{\infty} = (N_{F/\Q}(\lambda))^{\frac{w_{max}+1}{2}}$ (resp. $(N_{F/\Q}(\lambda))^{\frac{w_{max}-1}{2}}$). 
 \item If $I_{\lambda}$ acts on $V_{f}$ through a finite quotient, the action of $I_{\lambda}$ is reducible iff $\pi_{f,\lambda}$ is principal series. If $I_{\lambda}$ acts on $V_{f}$ through a finite quotient, an eigenvalue $\alpha$ of the action of a lift of $Fr_{\lambda}$ to $G_{\lambda}$ on $V_{f}$ is an algebraic number satisfying $|\alpha|_{\infty} = (N_{F/\Q}(\lambda))^{\frac{w_{max}}{2}}$.
 \end{enumerate}
\item  If $\p |p$, and  if $f$ is nearly ordinary at $\p$. Then $V_{f}$ has a unique filtration by graded pieces of dimension one:
  $$ 0 \to (V_{f})_{\p}^{+} \to V_{f} \to (V_{f})_{\p}^{-} \to 0 $$
which is stable under the decomposition group $G_{\p}$ at $\p$, and Hodge-Tate weight of $(V_{f})_{\p}^{+}$ is greater then Hodge-Tate weight of $(V_{f})_{\p}^{-}$.
\end{enumerate}
\end{theorem}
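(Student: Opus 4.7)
The plan is to decompose the statement into a local-global compatibility input at each finite prime, complemented by Ramanujan-type archimedean bounds on Frobenius eigenvalues. For part (1) with $\lambda \nmid p$, I would invoke Carayol's local-global compatibility theorem (with subsequent refinements by Taylor), which identifies the Weil-Deligne representation associated to $V_f|_{G_\lambda}$ with the Frobenius-semisimple representation attached to $\pi_{f,\lambda}$ under normalized local Langlands for $GL_2$. The dichotomy between sub-cases (a) and (b) then reads off directly from the structure of $\pi_{f,\lambda}$: the inertia group $I_\lambda$ acts through an infinite quotient precisely when the Weil-Deligne monodromy operator $N$ is nonzero, which is exactly the (twisted) Steinberg case, and the required filtration $0 \to (V_f)_\lambda^+ \to V_f \to (V_f)_\lambda^- \to 0$ is in that situation nothing but the monodromy-weight filtration. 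When $I_\lambda$ acts through a finite quotient, the principal series case matches reducibility of the inertial action, while the supercuspidal case corresponds to irreducibility.

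For the archimedean Frobenius eigenvalue bounds in part (1), I would appeal to the Ramanujan--Petersson conjecture for Hilbert modular forms of arithmetic weight, established by Blasius via the construction of motives attached to such forms and the resulting Weil-type bounds from the Weil conjectures. The symmetric $\pm \tfrac{1}{2}$ shifts around $\tfrac{w_{\max}}{2}$ appearing in the Steinberg case arise because the two graded pieces of the monodromy-weight filtration differ by a single Tate twist.

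For part (2), with $\p \mid p$ and $f$ nearly ordinary at $\p$, I would invoke Wiles' theorem on Galois representations attached to nearly ordinary Hilbert modular forms. This theorem produces a unique $G_\p$-stable short exact sequence with $(V_f)_\p^-$ one-dimensional, on which a Frobenius lift acts via the $p$-adic unit eigenvalue $c(\p,f)$. The inequality of Hodge--Tate weights between $(V_f)_\p^+$ and $(V_f)_\p^-$ then follows from the explicit description of the two characters on the graded pieces via local class field theory, together with the arithmetic weight data $(k,w)$: the Hodge--Tate weights of a nearly ordinary form of weight $(k,w)$ are explicitly determined by the pair, and are strictly larger on the sub-representation $(V_f)_\p^+$.

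The main obstacle, in the sense of care rather than originality, is to align the several normalizations involved (geometric versus arithmetic Frobenius, unitary versus arithmetic local Langlands, and the precise $w_{\max}/2$ shift appearing in the stated absolute values). Since the three inputs (Carayol's local-global compatibility, Blasius' Ramanujan bound, and Wiles' nearly ordinary filtration theorem) are taken as black boxes, the proof reduces essentially to verifying that these normalizations match the statement exactly; no new Iwasawa-theoretic content is required at this stage of the paper.
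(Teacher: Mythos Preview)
Your proposal is correct and aligns with the paper's treatment: the paper does not prove this theorem but simply records it as a known result due to Carayol, Ohta, Wiles, Taylor, and Blasius--Rogawski (cf.\ the sentence immediately preceding Theorem~\ref{1} and the citations \cite{ca}, \cite{bl}, \cite{wi}). The three black-box inputs you identify---Carayol's local--global compatibility for $\lambda\nmid p$, Blasius' Ramanujan bound for the archimedean eigenvalue estimates, and Wiles' nearly ordinary filtration for $\mathfrak p\mid p$---are exactly the references the paper invokes, so there is nothing to add beyond the normalization bookkeeping you already flag.
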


\begin{rem}
Let $f \in S_{k,w}(K_{1}(\M); \bar{\Q}_{p})$ be a normalized eigen cuspform of arithmetic weight $k$ with associated Galois representation $V_{f}$ over $K$ as in Theorem \ref{1}. If $f$ is $p$-ordinary, then for all primes $\p \mid p$,
$$V_{f}|_{G_{\p}} \sim \begin{bmatrix}
                                         \epsilon_{\p}  & \ast \\ 
                                         0  & \delta_{\p}
                                         \end{bmatrix},$$
 where $\epsilon_{\p},\delta_{\p}$ are characters of $G_{\p}$ with values in $K^{\ast}$ and $\delta_{\p}$ is unramified. In the case of nearly ordinary $f$ at $\p$, our Galois representation restricted $G_{\p}$ looks same, except $\delta_{\p}$ need not be unramified.
\end{rem}
The following two theorems are the Hida family versions of the two above theorem, and are due to the work of Wiles and Hida.
\begin{theorem}\label{3}
Let $\RR$ be a branch of ${\bf H}_{\mathcal{N}, \OO}$. Then there exists a finitely generated torsion free $\RR$ module $\T_{\RR}$ with continuous $G_{F}$ action, which satisfies the following properties:
\begin{enumerate}
\item The vector space $\mathcal{V}_{\RR}:= \T_{\RR} \otimes_{\RR} \mathcal{K}$ is of dimension $2$ over $\mathcal{K}$, where $\mathcal{K}$ is the fraction field of $\RR$.
\item The representation $\rho_{\mathcal{R}}$ of $G_{F}$ on $\mathcal{V}_{\RR}$ is irreducible and unramified outside primes dividing $\mathcal{N}p\infty$.
\item  For any arithmetic weight $(k,w)$, and for any nearly ordinary eigen cuspform $f \in S_{k,w}^{n.o.}(K_{1}(\mathcal{N}) \cap K_{11}(p^{s});\OO)$ which corresponds to the arithmetic weight $\xi = \xi_f$ on the branch $\RR$, $T_{f} = \T_{\RR} \otimes_{\RR} \xi_f (\RR)$ is a lattice of the Galois representation $V_{f}$ associated to $f$.
\end{enumerate}
\end{theorem}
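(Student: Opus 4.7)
The plan is to build $(\rho_\RR, \T_\RR)$ by the standard Hida--Wiles strategy: first construct the trace as a pseudo-representation, then lift to a genuine representation, then descend to a $G_F$-stable $\RR$-lattice. First, I would use the Zariski density of $\mathfrak X(\RR)$ in $\mathrm{Spec}(\RR)$ together with Theorem \ref{1}: at each arithmetic point $\xi$, the representation $V_{f_\xi}$ yields the trace relation $\mathrm{tr}(\rho_{f_\xi})(\Fr_\lambda) = \xi(T_\lambda)$ for $\lambda \nmid \mathcal N p \infty$. By Chebotarev density and continuity, these pointwise traces are compatible under specialization, so they assemble into a continuous pseudo-representation $T : G_{F,S} \to \RR$ of dimension $2$ (with $S$ the primes dividing $\mathcal N p \infty$) characterized by $T(\Fr_\lambda) = T_\lambda$ for $\lambda \notin S$.

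Second, I would upgrade $T$ to a genuine representation over the total fraction ring $\mathcal{K}$ of $\RR$. Invoking the Nyssen--Rouquier theorem (or equivalently Wiles's original patching), and using absolute irreducibility of the residual representation attached to some well-chosen arithmetic point $\xi_0$, I obtain a continuous $\rho_\RR : G_F \to GL_2(\mathcal{K})$ with trace $T$. Properties (1) and (2) then follow: the dimension is $2$ by construction; ramification is controlled because every specialization $V_{f_\xi}$ is unramified outside $\mathcal N p \infty$; and irreducibility of $\rho_\RR$ is inherited from that of at least one $V_{f_{\xi_0}}$ via the trace, since a reducible $\rho_\RR$ would force $T$ to split as a sum of characters, contradicting irreducibility at $\xi_0$.

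Third, to construct the $\RR$-lattice $\T_\RR$ needed for property (3), I would start from any finitely generated $\RR$-submodule of $\mathcal V_\RR$ of full $\mathcal{K}$-rank and take the sum of its $G_F$-translates inside $\mathcal V_\RR$. Residual absolute irreducibility ensures this sum is bounded, hence remains finitely generated by the noetherian property of $\RR$, giving a $G_F$-stable lattice $\T_\RR$. For any $\xi \in \mathfrak X(\RR)$, the specialization $\T_\RR \otimes_\RR \xi(\RR)$ has Frobenius traces $\xi \circ T = \mathrm{tr}(\rho_{f_\xi})$ at primes outside $S$, so by Brauer--Nesbitt together with Chebotarev its semisimplification is $V_{f_\xi}$; irreducibility of $V_{f_\xi}$ (Theorem \ref{1}) then forces the specialization itself to be a lattice in $V_{f_\xi}$.

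The principal obstacle is securing absolute irreducibility of the residual representation along the branch $\RR$: this property underlies both the Nyssen--Rouquier lift and the descent to a $G_F$-stable lattice. This is the standard non-Eisenstein hypothesis on $\RR$ and is implicit in the set-up of the paper (and will be needed later for the Selmer-group arguments as well). Beyond this, the argument is largely formal, combining Chebotarev density, continuity of Galois traces, and the classical interpolation theorems of Hida and Wiles already recalled in this subsection.
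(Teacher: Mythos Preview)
The paper does not give a proof of this theorem at all: it is stated as a background result, attributed explicitly to Wiles and Hida (``The following two theorems are the Hida family versions of the two above theorem, and are due to the work of Wiles and Hida''), and simply invoked as input for the rest of the argument. Your proposal is a reasonable outline of the standard Hida--Wiles construction via pseudo-representations and lattice descent, so there is nothing to compare against in the paper itself.

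One remark on your write-up: you appeal to absolute irreducibility of the residual representation as an implicit hypothesis needed for the Nyssen--Rouquier lift and the lattice descent, and you note correctly that this is the paper's hypothesis \textbf{(Irr)}. But be aware that the paper states Theorem~\ref{3} \emph{before} introducing \textbf{(Irr)}, and only later remarks that \textbf{(Irr)} and \textbf{(Dist)} together guarantee $\T_\RR$ can be taken free of rank two. The original constructions of Hida and Wiles do not assume residual irreducibility to produce \emph{some} finitely generated torsion-free lattice $\T_\RR$ (one can work with reflexive hulls or use the fact that $\RR$ is a domain to find a stable lattice in $\mathcal{V}_\RR$ by compactness of $G_F$); the hypothesis is needed for freeness and uniqueness, not mere existence. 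So your sketch is slightly stronger in its assumptions than what the theorem as stated requires.
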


Next theorem characterizes the local behavior of the Galois representation associated to $\RR$.
\begin{theorem}\label{4}
Let $\RR$ be a branch of ${\bf H}_{\mathcal{N}, \OO}$ and $\mathcal{V}_{\RR}=\mathcal{V}$ be the Galois representation over $\mathcal{K}$. Then,
\begin{enumerate}
\item For every prime $\lambda \nmid \mathcal{N}p$, we have:
$$ det(1-\text{Fr}_{\lambda} X|\mathcal{V}) = 1- T_{\lambda} X + S_{\lambda} X^2 $$
where $T_{\lambda}$ and $S_{\lambda}$ are Hecke operators on $\RR$ at $\lambda$ which is obtained at the limit of the Hecke operators in Theorem \ref{1} at finite levels.
\item For every prime $\p |p$, we have a canonical filtration obtained as the limit of the filtration given in Theorem \ref{2}:
$$ 0 \to \mathcal{V}_{\p}^{+} \to \mathcal{V} \to \mathcal{V}_{\p}^{-} \to 0$$
 which is stable under the action of the decomposition group $G_{\p}$ at $\p$.
\end{enumerate}
\end{theorem}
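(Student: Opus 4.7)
The plan is to prove Theorem \ref{4} by interpolating the Galois-theoretic data already available at arithmetic specializations (Theorems \ref{1}, \ref{2}, and \ref{3}), using the Zariski density of $\mathfrak X(\RR)$ in $\text{Spec}(\RR)$.

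For part (1), I would fix a prime $\lambda \nmid \mathcal{N} p$. Combining Theorem \ref{1} with Theorem \ref{3}(3), for every arithmetic specialization $\xi = \xi_f : \RR \to \bar{\Q}_p$ one has
\begin{equation*}
\det(1 - \text{Fr}_\lambda X \mid V_f) = 1 - \xi(T_\lambda) X + \xi(S_\lambda) X^2,
\end{equation*}
so that the trace and determinant of $\rho_\RR(\text{Fr}_\lambda)$ on $\mathcal{V}_\RR$, viewed as elements of $\mathcal{K}$, specialize at every arithmetic point to $\xi(T_\lambda)$ and $\xi(S_\lambda)$ respectively. Because $\RR$ is a finite torsion-free $\Lambda_\OO$-algebra and algebraic characters of $\Lambda_\OO$ are dense, $\mathfrak X(\RR)$ is dense in $\text{Spec}(\RR)$, and a function in $\RR$ vanishing on a dense set of primes must be zero (in fact, zero after inverting some nonzerodivisor, hence zero in $\mathcal{K}$). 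This forces $\text{tr}\,\rho_\RR(\text{Fr}_\lambda) = T_\lambda$ and $\det \rho_\RR(\text{Fr}_\lambda) = S_\lambda$, which is the content of (1). One can alternatively phrase the argument via pseudo-representations attached to the family.

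For part (2), at each arithmetic $\xi$ Theorem \ref{2}(2) supplies a canonical $G_\p$-stable filtration $0 \to (V_{f_\xi})_\p^+ \to V_{f_\xi} \to (V_{f_\xi})_\p^- \to 0$. The goal is to produce $\mathcal{V}_\p^+ \subset \mathcal{V}_\RR$ whose specialization at each arithmetic $\xi$ recovers $(V_{f_\xi})_\p^+$, and then set $\mathcal{V}_\p^- := \mathcal{V}_\RR / \mathcal{V}_\p^+$. Following Wiles and Hida, I would realize this by applying Hida's ordinary idempotent attached to the normalized $p$-Hecke operator $T_0(\p)$: the near-ordinariness built into $\T_\RR$ (the invertibility of $T_0(p)$) produces a canonical $G_\p$-stable rank-one $\mathcal{K}$-subspace of $\mathcal{V}_\RR$. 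Uniqueness of this filtration over $\mathcal{K}$ follows from the irreducibility of $\rho_\RR$ (Theorem \ref{3}(2)) together with the fibrewise strict separation of Hodge--Tate weights, which rules out any competing $G_\p$-stable line.

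The main difficulty is the glueing step in part (2): even given a compatible family of one-dimensional $G_\p$-stable fibres, one must show they assemble into a genuine sub-$\mathcal{K}$-representation of $\mathcal{V}_\RR$, which requires some control on the residual representation (the $p$-distinguished hypothesis) or, absent that, the finer structural results of \cite{wi, wi2} and the control theorems of \cite{h1, h2}. In practice the theorem is cited from this literature rather than reproved here, so my proof plan reduces to applying the Chebotarev/pseudo-representation argument for (1) and invoking Wiles--Hida for (2), verifying that the hypotheses of those references are met by the branch $\RR$ under consideration.
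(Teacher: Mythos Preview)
The paper does not supply its own proof of Theorem \ref{4}; it is stated as a known result, attributed to Wiles and Hida (references \cite{wi}, \cite{wi2}, \cite{h1}, \cite{h2}) with the preamble ``The following two theorems are the Hida family versions of the two above theorems, and are due to the work of Wiles and Hida.'' So there is no in-paper proof to compare against.

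Your sketch is a reasonable account of how the result is actually established in that literature: Zariski density of arithmetic points (or equivalently the pseudo-representation formalism) for part (1), and the construction of the filtration via the near-ordinary/$p$-distinguished structure for part (2). You correctly flag that the glueing in (2) is the nontrivial input and that one ultimately appeals to Wiles--Hida rather than redoing it. Since you already note this yourself in the last paragraph, your proposal is effectively aligned with what the paper does, namely cite the result.
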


For an ideal $\mathcal{N}$ in $\OO_{F}$ which is prime to $p$, we have a Hida's nearly ordinary Hecke algebra ${\bf H}_{\mathcal{N},\OO}$. We fix a branch $\RR$ of ${\bf H}_{\mathcal{N},\OO}$ and a representation $\T=\T_{\RR}$ as in Theorem \ref{3}. We assume that we have a $G_{\p}$ stable filtration 
$$0 \to \T_{\p}^{+} \to \T \to \T_{\p}^{-} \to 0 $$
by finite type $\RR$ modules $\T_{\p}^{+}$ and $\T_{\p}^{-}$ with continuous $G_{\p}$ action which gives the exact sequence
$$ 0 \to \mathcal{V}_{\p}^{+} \to \mathcal{V} \to \mathcal{V}_{\p}^{-} \to 0$$
in Theorem \ref{4} by taking base extension to $\mathcal{K}$.

Let $\mathfrak{m}_{\mathcal{R}}$ denote the maximal ideal of $\RR$ and $\mathbb F_\RR$ be the finite field $\RR/{\mathfrak m_\RR}$. Associated  to the Galois representation  of $G_F$ in Theorem \ref{3}, there exist a canonical residual Galois representation $\bar{\rho}_{\mathcal{R}} :G_F \lra GL_2(\mathbb F_\RR)$. Throughout this article, we assume the following two hypotheses on  this $\bar{\rho}$.
\begin{hypothesis}\label{irr}
{\bf (Irr)}: The residual representation $\bar{\rho}_{\mathcal{R}}$ of $G_F$   is absolutely irreducible.
\end{hypothesis}

\begin{hypothesis}\label{Dist}
{\bf (Dist)}:  As before, let $G_{\p}$ be the decomposition subgroup of $G_F$ at $\p$. The restriction  of the residual representation at the decomposition subgroup i.e. $ \bar{\rho}_{\mathcal{R}} \mid_{G_{\p}}$ is an extension of two distinct characters of $G_{\p}$ with values in $\mathbb F_\RR^{\ast}$ for each $\p|p$.
\end{hypothesis}

\begin{rem}
Conditions {\bf (Irr)} and {\bf (Dist)} together implies that the representation $\T$ can be chosen to be free of rank two over $\mathcal{R}$ and, for each $\p \mid p$, the graded pieces $\T_{\p}^{+}$ and $\T_{\p}^{-}$ are both free of rank one over $\mathcal{R}$.
\end{rem}

\subsubsection{ordinary Hida deformation of elliptic modular forms}

 We summarize the relevant results for $p$-ordinary Hida family of  elliptic modular forms in the following remark. This details can be found in the  literature (also  in \cite[Section 1]{jp}).

\begin{rem}\label{elipticcase}
Let $f= \sum a_{n} q^{n}$ be a normalized elliptic eigenform of weight $k \geq 2$, level $N$ and nebentypus $\psi$. We say that $f$ is $p$-ordinary if $\iota_{p}(a_{p})$ is a $p$-adic unit. Also assume that $f$ is $p$-stabilized ($p$-refined). By the work of Eichler-Shimura, Deligne, Mazur-Wiles, Wiles and many other people, to  such an $f$, one can associate a Galois representation $\rho_{f}: G_{\Q} \to GL(V_{f})$, here $V_{f}$ is a two dimensional vector space over a finite extension of $\Q_{p}$, which is unramified outside $Np$, and for any prime $l \nmid Np$, arithmetic $\text{Frob}_{l}$ has the characteristic polynomial $X^{2}-a_{l}X+\psi(l)l^{k-1}$, moreover, restricted to the decomposition subgroup at $p$, we have, $V_{f}|_{G_{p}} \sim \begin{bmatrix} \epsilon_{1} & \ast \\  0 & \epsilon_{2} \end{bmatrix}$ with $\epsilon_{2}$ unramified. We have a notion of  Hida family and arithmetic points for elliptic modular forms of tame level $N$. A theorem of Hida asserts that if $(N,p)=1$ and $f$ is a $p$-stabilized newform of weight $k \geq 2$, tame level $N$, then there exists a branch  $\mathcal{R}$ of an ordinary Hida family and an arithmetic point $\xi: \mathcal{R} \lra \bar{\Q}_p$ such that $\xi(\RR)$ canonically  corresponds  with $f$. By the work of Hida and Wiles, to $\mathcal{R}$ one can associate a big Galois representation of dimension two, $\rho_{\mathcal{R}} : G_{\Q} \to GL(\mathcal{V_{\mathcal{R}}})$, where $\mathcal{V_{\mathcal{R}}}$ is a vector space of dimension two over $\mathcal{K}$, the fraction field of $\mathcal{R}$. The Galois representation $\rho_{\mathcal{R}}$ is unramified at all finite places outside $Np$ and for a prime $l \nmid Np$, one has $det(1-\text{Fr}_{l} X|_{\mathcal{V}}) = 1 - T_{l} X + S_{l} X^{2}$, where $T_{l},S_{l}$ are Hecke operators on $\mathcal{R}$ at $l$, moreover, $\mathcal{V}_{\mathcal{R}}|_{G_{p}} \sim \begin{bmatrix} \tilde{\epsilon}_{1} & \ast \\  0 & \tilde{\epsilon}_{2} \end{bmatrix}$ with $\tilde{\epsilon}_{2}$ unramified. 
Similar to the  case of nearly ordinary Hilbert modular forms, throughout the article, we make the following two hypotheses.

\begin{hypothesis}\label{irr2}
{\bf (Irr)}:The residual representation $\bar{\rho}_\RR$ of $G_\Q$   is absolutely irreducible.
\end{hypothesis}

\begin{hypothesis}\label{Dist2}
{\bf (Dist)}:For the representation $ \bar{\rho}_\RR \mid_{G_{p}}$, we have  $\tilde{\epsilon}_{1} \neq  \tilde{\epsilon}_{2}\pmod{\mathfrak m_\RR}$.
\end{hypothesis}

Under {\bf (Irr)} and {\bf (Dist)}, we have the lattices $T_{f}$ in $V_{f}$ and $\mathcal{T}_{\mathcal{R}}$ in $\mathcal{V}_{\mathcal{R}}$ invariant under $\rho_{f}$ and $\rho_{\mathcal{R}}$ respectively, such that $T_{f} \cong  \mathcal{T}_{\mathcal{R}} \otimes_{\mathcal{R}} \xi(\mathcal{R})$. Moreover,  $\mathcal{T}_{\mathcal{R}}$ has a filtration as $G_{p}$ module,
$$0 \to \T_{\mathcal{R}}^{+} \to \T_{\mathcal{R}} \to \T_{\mathcal{R}}^{-} \to 0 $$
such that the graded pieces $\T_{\mathcal{R}}^{+}$ and $\T_{\mathcal{R}}^{-}$ are free $\mathcal{R}$ module of rank one.
\end{rem}

\subsection{ Various Selmer groups}\label{sec2}
 We fix a totally real number field $F$ and as $S = S_F$ is  a finite set of finite places of $F$ containing the primes lying above $\mathcal{N}p$.
Let $f \in S_{k,w}^{n.o.}(K_{1}(\mathcal{N}) \cap K_{11}(p^{s});\OO)$. Let $V_{f}$ denote the Galois representation associated to $f$ over $K_{f}$, a finite extension of $\Q_{p}$ containing all Hecke eigenvalues of $f$ as in Theorem \ref{1}. We denote the ring of integer of $K_{f}$ by $O_{f}$. Let $T_f \subset V_f$ be a $G_{F}$ invariant lattice. Thus we have an induced action of $G_{F}$ on the discrete module $A_f:= V_f/T_f $. Further, since $\rho_f$ is nearly ordinary at $p$, $A_f$ has a filtration as a $G_{\p}$ module for every $\p |p$,
\begin{equation}\label{iaff}
 0 \lra (A_f)_{\p}^{+} \lra A_f \lra (A_f)_{\p}^{-} \lra 0,
 \end{equation}
 where  both ${(A_f)_{\p}^{+}}^\vee$ and  ${(A_f)_{\p}^{-}}^\vee$ are free over $O_{f}$ of rank 1. 
 
 \n Let  $\mathcal L$ be an finite or infinite  algebraic extension of $F$ and $w$ will denote a prime in $\mathcal L$. The notation $w \mid S $ will mean the prime $w$ of $\mathcal L$ is lying above a prime in  $S$.  Given  such a prime $w$, let $G_{ w}$ and $I_w$ respectively  denote  the decomposition subgroup and inertia subgroup for the extension $\bar{\Q}/\mathcal L$ with respect to the primes  $\bar{w}/w$, where we have fixed a prime $\bar{w}$ of $\bar{\Q}$ over $w$. We denote the  Frobenius element at $w$ by $\text{Fr}_w$ so that $G_{w}/{I_w} \cong <\text{Fr}_w>.$

 Let $\RR$ be a branch of ${\bf H}_{\mathcal{N}, \OO}$. By Theorem \ref{3}, we have a free $\RR$ lattice $\T=\T_{\RR}$. Define,
 $$\A= \mathcal{A}_{\RR} := \T \otimes_{\RR}\text{Hom}_{\text{cont}}(\RR, \Q_p/{\Z_p}).$$
 For any arithmetic character $\xi$ of $\RR$, we have from definition \ref{kerxi},  $\mathcal A[P_\xi] \cong A_{f_\xi}. $ Using Theorem \ref{4} and  by   the assumption \ref{Dist}, we get $\mathcal A$ has a filtration as a $G_{\p}$ module
\begin{equation}\label{bgp}
 0 \lra \mathcal A_{\p}^{+} \lra \mathcal A \lra \mathcal A_{\p}^{-} \lra 0,
 \end{equation}
where  both ${\mathcal A_{\p}^{+}}^\vee$ and  ${\mathcal A_{\p}^{-}}^\vee$ are free $\RR$ modules of  rank 1. Also we have $\mathcal A_{\p}^-[P_\xi] \cong (A_{f_\xi})_{\p}^{-}$. 

 Let $L$ be a finite extension of $K$.  We define various Selmer groups associated to  $f$ and $\RR$, defined over $L$.
 \begin{defn}[Greenberg Selmer group of $f$] \label{self}
\begin{equation}
 S(A_f/ L)  =  \mathrm{ker}(H^1(F_S/L, A_f) \lra \underset {w \mid S, w \nmid p}{\oplus} H^1(I_w, A_f)^{G_{w}}\underset {w \mid \p \mid p}{\oplus} H^1(I_w, (A_f)_{\p}^{-})^{G_{w}})
  \end{equation}
 \end{defn}

\begin{defn}[Strict Selmer group of $f$]
\begin{equation}
S^{'}(A_{f}/L) := \mathrm{ker} (H^1(F_S/L, A_f) \lra \underset {w \mid S, w \nmid p}{\oplus} H^1(I_w, A_f)^{G_w}\underset {w \mid \p  \mid p}{\oplus} H^1(G_w, (A_f)_{\p}^{-}))
\end{equation}
\end{defn}

 \begin{defn}[Greenberg Selmer group of $\RR$] \label{bself}
\begin{equation} 
 S(\mathcal A/ L)  =  \mathrm{ker}(H^1(F_S/L, \mathcal A) \lra \underset {w \mid S, w \nmid p}{\oplus} H^1(I_w, \mathcal A)^{G_{w}}\underset {w \mid \p \mid p}{\oplus} H^1(I_w, \mathcal A_{\p}^{-})^{G_{w}}) 
  \end{equation}
 \end{defn}

\begin{defn}[Strict Selmer group of $\RR$]\label{bsself}
\begin{equation}
S^{'}(\mathcal A/L) := \mathrm{ker} (H^1(F_S/L, \mathcal A) \lra \underset {w \mid S, w \nmid p}{\oplus} H^1(I_w, \mathcal A)^{G_w}\underset {w \mid \p  \mid p}{\oplus} H^1(G_w, \mathcal A_{\p}^{-}))
\end{equation}
\end{defn}

\begin{defn}\label{pont0}
 Let $S^\perp \in \{S, S'\}$.    Define the Pontryagin dual
\begin{equation}\label{pont1}
X^\perp(T_f/L) = \text{Hom}_{\text{cont}}(S^\perp(A_f/L), \Q_p/{\Z_p})
\end{equation}
where $X^\perp = X$ if $ S^\perp =S $ and $X^ \perp = X'$ if $S^\perp = S'$. \\
Define $ \mathcal X^\perp(\T_\RR/L)$ by replacing $X^\perp $ by $\mathcal X^\perp $, $A_f $ by $\mathcal A$ and $T_f $ with $\T_\RR$ in  \eqref{pont1}.
\end{defn}

For an infinite extension $F_\infty$ of $F$, the Selmer group $S^\perp(A_f/F_\infty)$ (respectively    $S^\perp(\mathcal A/F_\infty)$) is defined   by taking the inductive  limit  of $S^\perp(A_f/{L'}) $ (resp. $S^\perp(\mathcal A/L')$) over all finite extensions $L'$ of $F$ contained in $F_\infty$ with respect to the natural restriction maps.  The corresponding Pontryagin duals are denoted by $X(T_f/F_\infty)$ and $ \mathcal X(\T_\RR/F_\infty)$ respectively.

\n  Under the natural action of $\Gamma = \text{Gal}(F_\cyc/F)$,  $X(T_f/F_\cyc)$ (respectively $\mathcal X(\T_\RR/F_\cyc)$) acquires the structure of a  $O_f[[\Gamma]]$ (respectively $\RR[[\Gamma]]$) module. Also note that for $B \in \{ A_f, \mathcal A \} $, we have  $ 0 \lra S'(B/F_\cyc) \hookrightarrow S(B/F_\cyc)$.


 
Next we  discuss various types of twisted Selmer groups. For any $\Z_p$ module $M$, let $M(1)$ denotes the Tate twist of $M$ by the $p$-adic cyclotomic character $\chi_p: \Gamma \lra \Z_p^\times$.  Define $$\T_\RR^{\ast} : = \text{Hom}_{\RR}(\T_\RR, \RR(1)).$$  We have a  corresponding filtration of $\T^{\ast}$ as a $G_{\p}$ module  $$ 0 \lra ({\mathcal T_\RR^{\ast}})_{\p}^{+} \lra \mathcal T_\RR^{\ast} \lra ({\mathcal T_\RR^{\ast}})_{\p}^{-} \lra 0,  $$ where  the graded pieces are defined as $ ({\mathcal T_\RR^{\ast}})_{\p}^{+} : = \text{Hom}_{\RR}(({\T_\RR})_{\p}^{-}, \RR(1))$ and  $({\mathcal T_\RR^{\ast}})_{\p}^{-} : = \text{Hom}_{\RR}(({\T_\RR})_{\p}^{+}, \RR(1))$. 
 We can now define 
$$\mathcal A^{\ast} : = \mathcal T_\RR^{\ast} \otimes_{\RR}\text{Hom}_{\text{cont}}(\RR, \Q_p/{\Z_p}).$$ From the above discussion, we can get a filtration of $\A^{\ast} $  as in \eqref{bgp}.

\n Also, corresponding to  a newform $f$,  we define $$T_f^{\ast} : = \text{Hom}_{O_f}(T_f, O_f(1)).$$ Then it is easy to see that the quotient $\mathcal T^{\ast}\otimes_{\RR} \xi(\RR)$ is isomorphic to $T^{\ast}_{f_\xi}. $ Also define $A_f^{\ast} = T_f^{\ast} \otimes \Q_p/\Z_p$. Then as in \eqref{iaff}, there is a filtration $$ 0 \lra ({A^{\ast}_f})_{\p}^{+} \lra A^{\ast}_f \lra ({A^{\ast}_f})_{\p}^{-} \lra 0,  $$ with ${({A^{\ast}_f})_{\p}^{+}}^\vee$ and ${({A^{\ast}_f})_{\p}^{-}}^\vee$  are free   $O_f$ module of rank 1. 

From the above discussions,  by making obvious modifications in  the definitions \ref{self}, \ref{bself} and \ref{pont0}, we can now define the Greenberg Selmer groups $S(A_f^{\ast}/\mathcal L)$, $S(\mathcal A^*/\mathcal L) $ and their  respective Pontryagin duals $X(T^{\ast}_f/\mathcal L)$ and $\mathcal X(T^*_\RR/\mathcal L)$ for any finite or infinite extension $\mathcal L$ of $F$.

Let $\rho : \Gamma \lra O_f^\times$ be a character.  Set $T_\rho = O_f(\rho)$, the $G_F$ module with underlying group  $O_f$ and  an $ G_F$ action on it via $\rho$. Set $T_f(\rho) = T_\rho \otimes_{O_f}T_f $, $A_f(\rho) = T_\rho \otimes_{O_f} A_f$ and  $(A_f)_{\p}^{-}(\rho) = T_\rho \otimes_{O_f} (A_f)_{\p}^{-}$ with the diagonal action of $G_F$. Let $M$ be  an $O_f[[\Gamma]]$ module.  Define $M(\rho) = T_\rho \otimes_{O_f} M$ with $\gamma \in \Gamma$ acting by diagonal action. Applying $\otimes_{O_f} T_\rho$ to the filtration in \eqref{iaff}, we get a filtration for  $T_\rho \otimes A_f$. Proceeding  in a way similar to the definition \ref{self}, define the  Greenberg Selmer groups with respect to the `twist' $\rho$, $S(A_f \otimes T_\rho/\mathcal L)$ and  $X(T_\rho \otimes T_f/\mathcal L)$, for any  extension $\mathcal L$  of $F$ (possibly infinite).   As $\rho$  acts trivially on $G_{F_\cyc}$,  we notice  that  
\begin{equation}\label{twist}
X(T_\rho \otimes T_f/F_\cyc) \cong X(T_f/F_\cyc) \otimes T_{\rho^{-1}}.
 \end{equation}
 In particular $X(T_\rho \otimes T_f/F_\cyc)$ is a finitely generated torsion  $O_f[[\Gamma]]$  module if and only if $X(T_f/F_\cyc)$ is so.

\begin{rem}\label{twsel}
Let  $f$ be a newform  nearly ordinary at $p$. Then we can express $T^*_f \cong T_{f^*}\otimes O_f(\chi_p^t)$  as Galois modules for some $t \in \Z$ and for some newform $f^*$ which is nearly ordinary at $p$, has the same level and weight as $f$  but  possibly different character.  Hence for any extension $\mathcal L$ of $F$,  we deduce that $X(T_\rho \otimes T_f^*/\mathcal L)  \cong X(T_{\rho'} \otimes T_{f^*}/\mathcal L)$  for certain character   $\rho' : \Gamma \lra O_f^\times$. In particular,   $X( T_f^*/F_\cyc)  \cong X(T_{f^*}/F_\cyc) \otimes O_f(s)$ for some $s \in \Z$.
\end{rem}

\begin{rem}{Selmer group for $p$-ordinary  elliptic modular forms and for the corresponding Hida family:}\label{seleliptic}

Let $f \in S_k(\Gamma_0(Np^r), \psi)$ be a $p$-ordinary, $p$-stabilized (elliptic) newform. Then by remark \ref{elipticcase}, via its Galois representation, we can associate  to $f$, a lattice $T_f$. Set $A_f = V_f/T_f$. Assume the conditions {\bf (Irr)} and {\bf (Dist)}.  Then, again by remark \ref{elipticcase}, to a branch $\RR$ of a $p$-ordinary Hida family of elliptic modular forms  of tame level $N$, we have a free $\RR$ lattice $\T_\RR$ and we  can define $\A =  \mathcal{A}_{\RR} := \T \otimes_{\RR}\text{Hom}_{\text{cont}}(\RR, \Q_p/{\Z_p}).$  Then by $p$-ordinarity, both $A_f$ and $\mathcal A$ are equipped  with canonical filtration as  $G_p$ modules.

Let $K$ be an imaginary quadratic field and $K_\cyc$ and $K_\infty$ be respectively, the unique cyclotomic and $\Z_p^2$ extension of $K$. We assume that $p$ splits in $K$ and the discriminant $D_K$ is coprime to $N$. Let $S = S_K$ be a finite set of places of $K$ containing the primes dividing $Np$. Then proceeding in a similar way as in definitions \ref{self} - \ref{pont0}, we can define the Greenberg Selmer groups and the strict Selmer group of $f$ and $\RR$ over $K_\cyc$ and $K_\infty$. In fact, we will use the same symbols  used in the above definitions. However, there is no case of confusion, as we deal with the elliptic modular forms and their ordinary Hida family only in section \ref{sec6}.

\end{rem}


\section{functional equation for a Hilbert modular form}\label{sec3}

A control theorem is a widely used  tool  in Iwasawa theory. We prove  a `control theorem'  for   the twisted Selmer group $X(T_f \otimes T_\rho /F_\cyc)$ with $\rho$ as before. A control theorem in case of  elliptic modular forms  was discussed in \cite[Theorem 3.1]{jp}. Recall, there is a tower  of fields  $F= F_0 \subset  ... \subset F_n \subset ...  \subset F_\cyc$ are such that $Gal(F_n/F) \cong \Z/{p^n\Z}$. Set $\Gamma_n = \text{Gal}(F_\cyc/F_n)$. Given a cuspidal Hilbert newform  $f \in S_{k,w}^{n.o}(K_{1}(N) \cap K_{11}(p^s);\mathcal{O})$, let $\rho_f$ and $\pi_f$ respectively be the associated  Galois representation and automorphic representation.  Let $f$ be  nearly ordinary at every prime $\mathfrak{p}$ in $F$, $\mathfrak p \mid p$.  Then for any such $\mathfrak p $ dividing $p$, the Frobenius $\text{Fr}_{\mathfrak p}$ acts on the 1 dimensional  subspace $V^{-}_{f, \mathfrak p}$ with the eigenvalue  $\alpha_{f, \mathfrak p}$ (say). 
\begin{defn}\label{exceptional}
Let $f$ be as above. Define $f$ to be \it{exceptional} if  $|\alpha_{f,\mathfrak p}|_{\C} =1 $ for some $\mathfrak p \mid p$.
\end{defn}

\begin{rem}\label{grc}
From the local Langlands correspondence for Hilbert modular forms due to Carayol \cite{ca} and generalized Ramanujan Conjecture, which is known for Hilbert modular form due to Blassias \cite{bl};  it follows that the condition of $f$ being  exceptional i.e. $|\alpha_{f,\mathfrak p}|_{\C} =1 $ for some $\mathfrak p \mid p$ happens  if  for some $\mathfrak p \mid p$, the $\mathfrak p$ component $\pi_{f,\mathfrak p}$ of the automorphic representation $\pi_f$ is Steinberg or its twist.
\end{rem}

\begin{theorem}\label{ctcyc} 
Let $f$ be a Hilbert newform in $S_{k,w}^{n.o}(K_{1}(N) \cap K_{11}(p^s);\mathcal{O})$ as above.
Assume $f$ is  not exceptional. Let $\rho$ be a character $\rho : \Gamma \lra O_f^\times$ as above. Then the kernel and the cokernel of the map $$X(T_f \otimes T_\rho /F_\cyc)_{\Gamma_n} \lra X(T_f \otimes T_\rho/F_n) $$ are finite groups for all $n$ with their cardinality  uniformly bounded independent of $n$. 
\end{theorem}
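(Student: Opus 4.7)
The plan is to pass to the Pontryagin dual of the map in the statement, reducing the problem to showing that the natural restriction map
$r_n \colon S(A_f(\rho)/F_n) \to S(A_f(\rho)/F_\cyc)^{\Gamma_n}$
(with $A_f(\rho) = T_\rho \otimes_{O_f} A_f$) has finite kernel and cokernel of cardinality bounded independently of $n$. The framework is the standard Mazur-style commutative diagram relating the Selmer groups at finite levels and the cyclotomic level, with top and bottom rows coming from the defining exact sequences involving the global cohomology $H^1(F_S/F_n, A_f(\rho))$, $H^1(F_S/F_\cyc, A_f(\rho))^{\Gamma_n}$, and the Greenberg local conditions $\mathcal{J}_v$ prescribed in Definition~\ref{self}. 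Applying the snake lemma reduces the task to uniformly bounding the kernel and cokernel of the global vertical map $h_n$ and the kernels of the local vertical maps $g_{n,v}$.

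The analysis of $h_n$ is handled via inflation-restriction: the kernel is $H^1(\Gamma_n, A_f(\rho)^{G_{F_{\cyc,S}}})$, which is finite and uniformly bounded because {\bf (Irr)} forces $A_f(\rho)^{G_{F_{\cyc,S}}}$ to be finite (it is even trivial generically, since $\rho_f$ restricted to $G_{F_\cyc}$ remains absolutely irreducible). The cokernel embeds into $H^2(\Gamma_n, -)$, which vanishes since $\Gamma_n \cong \Z_p$ has $p$-cohomological dimension one.

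For $v \in S$ with $v \nmid p$, another inflation-restriction shows that the kernel of $g_{n,v}$ injects into a direct sum, over primes $w \mid v$ of $F_n$, of $H^1(\Gamma_w, A_f(\rho)^{I_w})$, where $\Gamma_w$ is the (procyclic) local Galois group of the cyclotomic tower above $w$. Each such term is finite, and since $v$ is unramified in $F_\cyc/F_n$ for $n$ large, the splitting behaviour stabilizes, so these contributions are uniformly bounded. For $v \mid p$, the kernel of $g_{n,v}$ is controlled by $((A_f)_\p^-(\rho))^{G_w}$, where $G_w$ denotes the decomposition group over $F_\cyc$. Here the non-exceptional hypothesis enters: by Theorem~\ref{2} together with the local Langlands correspondence and Ramanujan (see Remark~\ref{grc}), $\mathrm{Fr}_w$ acts on $(A_f)_\p^-$ by $\alpha_{f,\p}$ with $|\alpha_{f,\p}|_\C \neq 1$, so after twisting by $\rho$ (which takes values in $O_f^\times$ and whose cyclotomic action has $p$-adic absolute value $1$) the resulting eigenvalue on $(A_f)_\p^-(\rho)$ is still not a root of unity. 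Hence the Frobenius invariants of the twisted module are finite, with cardinality bounded by a constant depending only on $f$ and $\rho$.

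The main obstacle is the careful analysis at primes above $p$: one must verify that the twist by $\rho$, although trivial on $G_{F_\cyc}$, does not manufacture new infinite-order invariants along the tower, which requires comparing the Frobenius eigenvalue with the cyclotomic character and using the non-exceptional hypothesis to rule out accidental cancellations. Once the global bound from $h_n$ and the local bounds from each $g_{n,v}$ are assembled via the snake lemma, one obtains uniform finiteness of $\ker r_n$ and $\mathrm{coker}\, r_n$, and the theorem follows by Pontryagin duality.
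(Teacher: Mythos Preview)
Your overall strategy matches the paper's proof: dualize to the restriction map $S(A_f(\rho)/F_n)\to S(A_f(\rho)/F_\cyc)^{\Gamma_n}$, set up the Mazur diagram, and bound the global and local vertical maps via inflation--restriction and the snake lemma. Two points deserve correction, however.

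First, for the global map you invoke {\bf (Irr)} to conclude that $A_f(\rho)^{G_{F_\cyc}}$ is finite. But {\bf (Irr)} is \emph{not} a hypothesis of Theorem~\ref{ctcyc}. The paper instead shows $V_f(\rho)^{G_{F_n}}=0$ by choosing a prime $\lambda\nmid Np$ and appealing to the Weil bounds on Frobenius eigenvalues in Theorem~\ref{2}(1): a trivial subrepresentation would force an eigenvalue of absolute value~$1$, which is impossible. Your argument would be fine if {\bf (Irr)} were assumed, but as written you are importing an extraneous hypothesis. (The paper also cites \cite[Theorem~3.5(i)]{o2} to pass from ``finite for each $n$'' to ``uniformly bounded''; your bound via $|H^1(\Gamma_n,M)|\le |M|$ for finite $M$ is fine once $M=A_f(\rho)^{G_{F_\cyc}}$ is known to be finite.)

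Second, your local analysis is less precise than the paper's. At $v_n\nmid p$ the paper observes that $F_\cyc/F_n$ is unramified at $v_n$, so $I_{v_n}/I_{v_c}=0$ and the kernel of the local map on $H^1(I_{v_n},-)\to H^1(I_{v_c},-)$ is literally zero; your detour through $H^1(\Gamma_w,A_f(\rho)^{I_w})$ is unnecessary and the claim ``each such term is finite'' is not justified. At $v_n\mid\mathfrak p\mid p$ you correctly identify that the non-exceptional hypothesis is needed, but your parenthetical about $\rho$ having ``$p$-adic absolute value $1$'' is the wrong invariant: what is required is that the Frobenius eigenvalue on $T_\rho\otimes (A_f)_\mathfrak p^-$ not be a \emph{root of unity}. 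The paper argues that $\alpha_{f,\mathfrak p}$ has complex absolute value $\neq 1$ (Remark~\ref{grc}), while the action of $\mathrm{Fr}_{\mathfrak p}$ on $T_\rho$ contributes a root of unity (using that $F_\cyc/F$ is totally ramified at $\mathfrak p$ and $\rho$ is trivial on $G_{F_\cyc}$); hence the product is not a root of unity, and the Frobenius invariants are finite with order stabilizing for $n\gg 0$.
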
 
 
 \proof  Let $v_n$  be a prime of $F_n$ lying above  $S$   and let $v_c$  be a  prime of $F_\cyc$ lying above it. Given such a prime $v_n$ we fix a prime $\bar{v}$ in $\bar{\Q}$ lying above it. Recall from $\S$ \ref{sec2},  $G_{v_n}$ (resp. $G_{v_c}$) denotes the decomposition subgroup $\bar{\Q}/F_n$ (resp. $\bar{\Q}/F_\cyc$) for the prime $\bar{v}/{v_n}$ (resp.  $\bar{v}/v_\infty$). The corresponding inertia subgroups are  $I_{v_n}$ and  $I_{v_c}$ respectively. Similarly, $G_{v_{c}/v_{n}}$  (resp. $I_{v_c/v_n}$) denotes the decomposition subgroup (resp. inertia subgroup) of the Galois group of  $F_\cyc/F_n$ with respect to the primes $v_{\infty}/v_{n}$. Also the various Frobenius elements are given by  $<\text{Fr}_{\mathfrak{p}}>  : = {\frac{G_{\mathfrak{p}}}{I_{\mathfrak{p}}}}$,  $<\text{Fr}_{v_{n}}>  : = {\frac{G_{v_{n}}}{I_{v_{n}}}}$ and  $ <\text{Fr}_{v_c/v_n}>: =\frac{G_{v_c/v_n}}{I_{v_c/v_n}}$.  Set  
 \[
 J_{v_n}  =
  \begin{cases}
    H^1(I_{v_n}, T_\rho \otimes A_f)  & \text{if } v_n \mid S, v_n \nmid p\\
    H^1(I_{v_n}, T_\rho \otimes (A_f)_{\p}^{-})^{G_{v_n}} & \text{if } v_n \mid \p  \mid p,
    \end{cases}
\]
\[
  J_{v_c}  =
  \begin{cases}
    H^1(I_{v_c}, T_\rho \otimes A_f)  & \text{if } v_c \in S, v_c \nmid p\\
    H^1(I_{v_c}, T_\rho \otimes (A_f)_{\p}^{-})^{G_{v_c}} & \text{if } v_c  \mid \p  \mid p,
    \end{cases}
\]
We study the commutative diagram
\begin{equation}\label{controlcyc} 
\xymatrix{  0 \ar[r] & S( T_\rho \otimes A_f/F_\cyc)^{\Gamma_n} \ar[r]  & H^1(F_S/F_\cyc , T_\rho \otimes A_f)^{\Gamma_n}  \ar[r] & ( \underset {v_{c} \mid S} {\oplus} J_{v_c})^{\Gamma_n} \\ 
                   0 \ar[r] & S( T_\rho \otimes A_f/F_n) \ar[r] \ar[u]_{\alpha_n}&  H^1(F_S/F_n , T_\rho \otimes  A_f) \ar [r]  \ar[u]_{\phi_n}&   \underset {v_n \mid S} {\oplus}J_{v_n} \ar[u]_{\theta_n = \oplus \theta_{v_n}}}
\end{equation}
First, we prove that $\text{ker}(\alpha_n) $ is finite for all $n$. We will show $\text{ker}(\phi_n)$ is finite for all $n$.  Note that $\text{ker}(\phi_n) \cong H^1(\Gamma_n, (T_\rho \otimes A_f)^{G_{F_\cyc}})$. As $\Gamma_n$ is topologically cyclic and $T_\rho \otimes A_f$ is a cofinitely generated $\Z_p$ module is follows that  $\text{ker}(\phi_n)$ is finite if and only if $H^0(\Gamma_n, (T_\rho \otimes A_f)^{G_{F_\cyc}})$ is finite. Also, to show $(T_\rho \otimes A_f)^{G_{F_n}}$ is finite, it suffices to show $V_f(\rho)^{G_{F_n}} = 0.$ If $V_f(\rho)^{G_{F_n}} \neq 0$, then $V_f(\rho)$ contains a trivial $G_{F_n} $ sub-representation. In that case, we choose a place $\lambda \nmid Np $ and then restrict $G_F$ representation   to $\text{Fr}_\lambda$. Then    considering the eigenvalues of $\text{Fr}_\lambda$, we immediately get a contradiction by Theorem \ref{2}(1)(a), Theorem \ref{2}(1)(b). 

\noindent As $H^0(F_n, V_f(\rho)) =0$ for every $n$ and $\text{ker}(\alpha_n) $ is finite for all $n$, we deduce from \cite[Theorem 3.5(i)]{o2} that $\text{ker}(\alpha_n) $ is uniformly bounded independent of $n$. 

For  $\text{coker}(\alpha_n)$, first  note that $\text{coker}(\phi_n) \subset H^2(\Gamma_n, (T_\rho \otimes A_f)^{G_{F_\cyc}}) = 0 $ as $p$-cohomological dimension  of $\Gamma_n =$ 1 for any $n$. Using the Snake lemma, it suffices  to show that the kernel of $\theta_n$ are finite and  uniformly bounded independent of $n$. Now, there are only finite number of primes in $F_\cyc$ lying above a given prime in any $F_n$. Hence  it is enough to prove  $\text{ker}(\theta_{v_n})$ is finite for each $v_n \mid S$.
Now for a prime $v_n \mid S $ such that $v_n \nmid p$, we have 
 \begin{equation} \label{1001}
 \text{ker}(H^1(I_{v_n}, T_\rho \otimes  A_f) \lra H^1(I_{v_c}, T_\rho \otimes A_f)) \cong H^1(I_{v_n}/I_{v_c}, T_\rho \otimes  A_f^{I_{v_c}}).
 \end{equation}
 The last isomorphism follows from the inflation-restriction sequence of Galois cohomology.  We know that the  cyclotomic $\Z_p$ extension of any number field is unramified outside primes above $p$. Thus  $I_{v_n}/I_{v_c}  \cong I_{v_c/v_n} = 0 $ whenever $v_n \nmid p$. Using \eqref{1001} in diagram \eqref{controlcyc}, it is immediate that $\text{ker}(\theta_{v_n})$ vanishes for $v_n \nmid p$.

To consider $\text{ker}(\theta_{v_{n}})$ for primes $v_n \mid \p \mid p$, we study 
\begin{align}\label{lo2}
\text{ker}(H^1(I_{v_{n}}, T_\rho \otimes  (A_f)_{\p}^{-})^{ G_{v_{n}}} \lra H^1(I_{v_{c}},T_\rho \otimes  (A_f)_{\p}^{-})^{G_{v_{c}}}) \nonumber \\ \cong H^1(I_{v_{c}/v_{n}}, (T_\rho \otimes (A_f)_{\p}^{-})^{G_{v_{c}}})^{<\text{Fr}_{v_c/v_n}>}
\end{align}
As,  $\rho$ acts trivially on $G_{v_{c}}$,  $$H^1(I_{v_{c}/v_{n}}, (T_\rho \otimes (A_f)_{\p}^{-})^{G_{v_{c}}}) \cong H^1(I_{v_{c}/v_{n}}, T_\rho \otimes ((A_f)_{\p}^{-})^{G_{v_{c}}}). $$   Now $F_\cyc/F$ is totally ramified at $\mathfrak{p}$ for all $\mathfrak{p}|p$. Hence, $I_{v_{c}/v_{n}} \cong \Z_p$ for every $v_n$ lying above $\p \mid p$. Note that, as an abstract group $T_\rho \cong O_f $, $ ((A_f)_{\p}^{-})^\vee \cong O_f$.  Hence \\$H^1(I_{v_{c}/v_{n}}, T_\rho\otimes {(A_f)_{\p}^{-}}^{G_{v_{c}}}) = 0 $ unless $I_{v_{c}/v_{n}}$ acts trivially on $T_\rho \otimes ({(A_f)_{\p}^{-}})^{G_{v_{c}}}$. 

Thus it suffices to consider the case where $I_{v_{c}/v_{n}}$  is acting trivially on $T_\rho \otimes ({(A_f)_{\p}^{-}})^{G_{v_{c}}}$, as otherwise kernel in \eqref{lo2} is $0$.  Then 
$$H^1(I_{v_{c}/v_{n}}, T_\rho \otimes {(A_f)_{\p}^{-}}^{G_{v_{c}}}) \cong \text{Hom}(I_{v_{c}/v_{n}}, (T_\rho \otimes {(A_f)_{\p}^{-}})^{G_{v_{c}}}). $$ 
 
\n As $F_\cyc/F$ is abelian, the action of the Frobenius  $ <\text{Fr}_{v_{c}/v_{n}}>$ on   $I_{v_{c}/v_{n}}$ (via lifting and conjugation) is trivial. Hence,  the module in \eqref{lo2} is isomorphic to 
\begin{align*} 
 \text{Hom}_{<\text{Fr}_{v_{c}/v_{n}}>}(I_{v_{c}/v_{n}}, (T_\rho \otimes {(A_f)_{\p}^{-}})^{G_{v_{c}}})  \\ \cong \text{Hom}(I_{v_{c}/v_{n}}, (T_\rho \otimes (A_f)_{\p}^{-})^{G_{v_{n}}}) \\ \cong \text{Hom}(I_{v_{c}/v_{n}}, (T_\rho \otimes (A_f)_{\p}^{-})^{\frac{G_{v_{n}}}{I_{v_{n}}}})\end{align*}
 
We claim that $(T_\rho \otimes (A_f)_{\p}^{-})^{<\text{Fr}_{v_{n}}>}$ is finite.  On the one dimensional vector space corresponding to  $(A_f)_{\p}^{-}$, $\text{Fr}_{\mathfrak{p}}$ acts by multiplication  by  $\alpha_{\mathfrak{p}}(f)$. By our assumption in this theorem that $f$ is not  exceptional and remark \ref{grc}, it follows that the eigenvalue of $\text{Fr}_{\mathfrak{p}}$ acting on the 1 dimensional line  corresponding to $(A_f)_{\p}^{-}$  is not a root of unity. On the other hand, $\rho(g) =1$ for any $g  \in G_{F_\cyc}$ and  $F_\cyc/F$ is totally ramified at any prime in $F$ lying above $p$. Hence  the eigenvalue corresponding to the action of $\text{Fr}_{\mathfrak{p}}$ on $T_\rho$ is a root of unity. Combining these  facts, we deduce that  the  eigenvalue of  $\text{Fr}_{\mathfrak{p}}$ on the line corresponding to $T_\rho \otimes (A_f)_{\p}^{-}$ is not a root of unity. Hence $\text{Fr}_{v_{n}}$  acts non-trivially on  $T_\rho \otimes (A_f)_{\p}^{-}$ for any $n$ and any  $\rho$ as before. Hence ${(T_\rho \otimes (A_f)_{\p}^{-})}^{<\text{Fr}_{v_{n}}>}$ is indeed finite.  Also as  $<\text{Fr}_{v_{n}}>  ~ \cong  ~<\text{Fr}_{v_{c}}>$ for $n \gg 0$, we deduce that   the size of ${(T_\rho \otimes (A_f)_{\p}^{-})}^{<\text{Fr}_{v_{n}}>}$ is independent of $n$ for large enough $n$. As $I_{v_{c}/v_{n}} \cong \Z_p$ for every $n$;  for any  $\rho$ and every $n \geq 0$, the module in  \eqref{lo2}, given by   $\text{Hom}(I_{v_{c}/v_{n}}, (T_\rho \otimes (A_f)_{\p}^{-})^{<\text{Fr}_{v_{n}}>})$,  is also finite  with its cardinality bounded  independent of $n$.  Hence same is true for $\text{ker}(\theta_{v_{n}})$ for  $v_n \mid \p$. This completes the proof.
\qed 
 
Let $O$ be the ring of integers of a finite extension of $\Q_p$. Take  $M$ to be a finitely generated $\Lambda$ module, where $\Lambda = O[[\Gamma]]$. Let us denote $ \text{Ext}^{~1}_\La(M, \La)$ by $a_\La^1(M)$.

\begin{lemma}\label{a1}
 Let $M$ be a finitely generated torsion $\La = O[[\Gamma]]$ module such that $M_{\Gamma_n}$ is finite for each $n$. Then 
 $$ a_\La^1(M) \cong \underset{n}\varprojlim ~({M^\iota}^\vee)^{\Gamma_n}.$$ 
\end{lemma}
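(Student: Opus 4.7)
The plan is to identify both sides at each finite level via a Gorenstein self-duality computation, and then pass to the inverse limit. Fix a topological generator $\gamma$ of $\Gamma$ and set $\omega_n = \gamma^{p^n} - 1 \in \La$, so that $\La/\omega_n\La \cong O[\Gamma/\Gamma_n]$ is $O$-free of rank $p^n$. For each $n$, I would apply $\Hom_\La(M,-)$ to the short exact sequence $0 \to \La \xrightarrow{\omega_n} \La \to \La/\omega_n\La \to 0$; since $M$ is torsion, $\Hom_\La(M, \La) = 0$, and the resulting long exact sequence yields an injection $a_\La^1(M)/\omega_n a_\La^1(M) \hookrightarrow \text{Ext}^1_\La(M, \La/\omega_n\La)$ with cokernel inside $\text{Ext}^2_\La(M, \La)[\omega_n]$.

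Next I would identify $\text{Ext}^1_\La(M, \La/\omega_n\La) \cong ((M^\iota)^\vee)^{\Gamma_n}$. Three ingredients enter. First, a base-change computation (using the two-term resolution $0 \to \La \xrightarrow{\omega_n} \La \to \La/\omega_n\La \to 0$, together with the observation that $M[\omega_n]$ is $O$-torsion under our hypothesis) gives $\text{Ext}^1_\La(M, \La/\omega_n\La) \cong \text{Ext}^1_{O[\Gamma/\Gamma_n]}(M_{\Gamma_n}, O[\Gamma/\Gamma_n])$. Second, the Gorenstein self-duality $O[\Gamma/\Gamma_n] \cong \Hom_O(O[\Gamma/\Gamma_n], O)$ (with its built-in $\iota$-twist $g \mapsto g^{-1}$) converts this into $\text{Ext}^1_O(M_{\Gamma_n}, O)$. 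Third, Matlis duality $\text{Ext}^1_O(X, O) \cong X^\vee$ for finite $O$-modules $X$ applies to $X = M_{\Gamma_n}$ by the hypothesis. The $\iota$-twist in the self-duality is precisely what produces $M^\iota$ on the right, and a direct check shows $((M^\iota)^\vee)^{\Gamma_n} \cong (M_{\Gamma_n})^\vee$ as $\La/\omega_n$-modules with the appropriate Galois action.

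Finally, I would take the inverse limit over $n$, with transition maps on the right being the trace/corestriction $((M^\iota)^\vee)^{\Gamma_{n+1}} \to ((M^\iota)^\vee)^{\Gamma_n}$, which come (via the above identifications) from the inclusions $\omega_{n+1}\La \subset \omega_n\La$. Since $a_\La^1(M)$ is finitely generated over $\La$ and $\omega_n$ lies in arbitrarily high powers of $\m_\La$, it is $(\omega_n)$-adically complete, so $a_\La^1(M) \cong \varprojlim_n a_\La^1(M)/\omega_n a_\La^1(M)$. The hardest step is to verify that the cokernel terms $\text{Ext}^2_\La(M, \La)[\omega_n]$ assemble into a trivial inverse system; here the pseudo-nullity of $\text{Ext}^2_\La(M, \La)$ together with the finiteness hypothesis on each $M_{\Gamma_n}$ play the key role. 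Once this vanishing is established, combining the level-$n$ identifications with the completeness above yields the desired isomorphism $a_\La^1(M) \cong \varprojlim_n ((M^\iota)^\vee)^{\Gamma_n}$.
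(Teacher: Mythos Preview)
The paper does not supply its own argument for this lemma; it simply refers the reader to \cite[\S 1.3, page 733]{pr}. Your proposal is precisely the kind of direct computation that underlies that reference: reduce modulo $\omega_n$ via the long exact sequence for $\Hom_\La(M,-)$, identify the finite-level piece through the Frobenius/Gorenstein self-duality of $O[\Gamma/\Gamma_n]$ and Matlis duality for finite $O$-modules, and then pass to the limit. The steps you outline are correct, including the key observations that $M[\omega_n]$ is finite (so the change-of-rings spectral sequence degenerates where needed), that $\mathrm{Ext}^2_\La(M,\La)$ is finite with transition maps given by multiplication by $\omega_{n+1}/\omega_n\in\m_\La$ (hence $\varprojlim$ of that system vanishes), and that $\omega_n\in\m_\La^{n+1}$ forces $a_\La^1(M)$ to be complete for the $(\omega_n)$-filtration. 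One small sharpening: the cokernel in your first long exact sequence is \emph{equal} to $\mathrm{Ext}^2_\La(M,\La)[\omega_n]$, not merely contained in it, which makes the limit argument cleaner.
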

\proof  See \cite[\S 1.3, page 733]{pr}.  \qed

\begin{lemma}\label{pa1}
Let $M$ be a finitely generated torsion $\La = O[[\Gamma]]$ module. Then   $Ch_{\La }(M) = Ch_{\La }(a^1_\La(M))$, considered as ideals in $O[[\Gamma]].$
\end{lemma}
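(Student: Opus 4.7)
The plan is to reduce to elementary modules via the structure theorem and then compute $a^1_\Lambda$ explicitly on each summand. Recall that $\Lambda = O[[\Gamma]] \cong O[[T]]$ is a two-dimensional regular (hence Cohen–Macaulay) local ring, so characteristic ideals are well defined and preserved under pseudo-isomorphism. By the structure theorem, there is a pseudo-isomorphism $M \to E$ with $E = \bigoplus_i \Lambda/(f_i^{n_i})$ an elementary module for distinct height-one primes $(f_i)$, and $Ch_\Lambda(M) = \bigl(\prod_i f_i^{n_i}\bigr)$.

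First I would show that $a^1_\Lambda$ vanishes on any pseudo-null $\Lambda$-module $P$. Since $\dim P = 0$ and $\Lambda$ is Cohen–Macaulay of dimension $2$, local duality (or a direct depth computation against a regular sequence of length $2$) gives $\mathrm{Ext}^i_\Lambda(P, \Lambda) = 0$ for $i < 2$. In particular $a^0_\Lambda(P) = a^1_\Lambda(P) = 0$, while $a^2_\Lambda(P)$ is again pseudo-null.

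Next, I would factor the pseudo-isomorphism $M \to E$ through $M/A$, where $0 \to A \to M \to M/A \to 0$ and $0 \to M/A \to E \to B \to 0$ are short exact with $A, B$ pseudo-null. Applying $\mathrm{Hom}_\Lambda(-, \Lambda)$ and using the vanishing of $a^0_\Lambda$ on torsion modules together with the previous step, the first sequence yields $a^1_\Lambda(M/A) \cong a^1_\Lambda(M)$, and the second yields an exact sequence
$$0 \to a^1_\Lambda(E) \to a^1_\Lambda(M/A) \to a^2_\Lambda(B).$$
Since $a^2_\Lambda(B)$ is pseudo-null, this gives a pseudo-isomorphism $a^1_\Lambda(E) \to a^1_\Lambda(M)$.

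Finally, for each elementary summand the free resolution $0 \to \Lambda \xrightarrow{f_i^{n_i}} \Lambda \to \Lambda/(f_i^{n_i}) \to 0$ produces $a^1_\Lambda(\Lambda/(f_i^{n_i})) \cong \Lambda/(f_i^{n_i})$, so $a^1_\Lambda(E) \cong E$ as $\Lambda$-modules. Combining these, $Ch_\Lambda(a^1_\Lambda(M)) = Ch_\Lambda(a^1_\Lambda(E)) = Ch_\Lambda(E) = Ch_\Lambda(M)$. There is no genuine obstacle here; the only point requiring care is to separate the $\Lambda$-action on $\mathrm{Ext}^1_\Lambda(M,\Lambda)$ coming from the second argument (which is what appears in $Ch_\Lambda(a^1_\Lambda(M))$) from the $\iota$-twisted description given by Lemma \ref{a1}, so that no spurious $\iota$ enters the statement.
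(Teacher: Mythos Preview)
Your argument is correct and is in fact the standard proof of this well-known fact. The paper itself does not give an argument: its proof of this lemma is a bare citation to \cite[Lemma 3.5]{jp}. What you have written is essentially the content behind that reference, namely: reduce to an elementary module via the structure theorem, use the grade condition $\mathrm{Ext}^i_\Lambda(P,\Lambda)=0$ for $i<2$ when $P$ is pseudo-null (since $\Lambda$ is regular of dimension $2$), push $a^1_\Lambda$ through the two short exact sequences encoding the pseudo-isomorphism, and compute $a^1_\Lambda(\Lambda/(f^n))\cong\Lambda/(f^n)$ from the obvious length-one free resolution. Each step is handled correctly, including the observation that $a^2_\Lambda(B)$ is pseudo-null so that the map $a^1_\Lambda(E)\to a^1_\Lambda(M)$ has pseudo-null cokernel.

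Your closing remark about the $\iota$-twist is well taken but not a source of difficulty: the $\Lambda$-module structure on $a^1_\Lambda(M)=\mathrm{Ext}^1_\Lambda(M,\Lambda)$ used to form its characteristic ideal is the intrinsic one (the actions through the two arguments coincide since $\Lambda$ is commutative), and no $\iota$ appears. Lemma \ref{a1} gives an alternative description of $a^1_\Lambda(M)$ as an inverse limit involving $M^{\iota\vee}$, but that identification is not needed for the present computation and does not interfere with the statement.
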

\proof See  \cite[Lemma 3.5]{jp}).
\qed

\begin{lemma}\label{tw}
Let $M$  be a finitely generated torsion $\La =O[[\Gamma]]$ module. Then there exists a character $\rho : \Gamma= \text{Gal}(F_\cyc/F) \lra  \text{Aut}(O) $ such that $(M(\rho))_{\Gamma_n}$  is finite for every $n$, where $M(\rho)$  is as defined in section \ref{sec2}. 
\end{lemma}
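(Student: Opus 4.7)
The plan is to invoke the structure theorem for finitely generated torsion $\Lambda$-modules and then select a continuous character $\rho$ by avoiding a countable set of ``bad'' values.

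Fix a topological generator $\gamma$ of $\Gamma$ and identify $\Lambda = O[[T]]$ via $T = \gamma - 1$. By the structure theorem there is a pseudo-isomorphism $M \to E := \bigoplus_i \Lambda/(p^{a_i}) \oplus \bigoplus_j \Lambda/(f_j(T)^{b_j})$, where the $f_j$ are distinguished irreducible polynomials. The kernel and cokernel of such a pseudo-isomorphism are pseudo-null, hence finite, since $\Lambda$ has Krull dimension $2$. Because the twist $N \mapsto N(\rho)$ is an exact functor (tensor product with a free $O$-module of rank one), this induces a pseudo-isomorphism $M(\rho) \to E(\rho)$ whose kernel and cokernel are still finite; a snake-lemma argument using right-exactness of $H_0(\Gamma_n, -)$ then shows that $(M(\rho))_{\Gamma_n}$ is finite iff $(E(\rho))_{\Gamma_n}$ is finite. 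Thus it suffices to find $\rho$ such that $(E(\rho))_{\Gamma_n}$ is finite for every $n$.

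A continuous character $\rho : \Gamma \to O^\times$ is determined by $u := \rho(\gamma)$, and since $\Gamma$ is pro-$p$ we must have $u \in 1 + \pi O$, the pro-$p$ part of $O^\times$ (with $\pi$ a uniformizer of $O$). For such $u$, the assignment $\gamma \mapsto u\gamma$ extends to a ring automorphism $\sigma_u$ of $\Lambda$ with $\sigma_u(T) = u(1+T) - 1$, and $E(\rho)$ is identified with $E$ viewed as a $\Lambda$-module via $\sigma_u$. Writing $F(T) = p^a \prod_j f_j(T)^{b_j}$ for a generator of the characteristic ideal of $E$, one obtains $\text{Ch}_\Lambda(E(\rho)) = \sigma_u^{-1}(\text{Ch}_\Lambda(E))$, whose roots in $\bar{\Q}_p$ are precisely the values $u(\alpha + 1) - 1$ as $\alpha$ ranges over the finite set $\Sigma$ of roots of $\prod_j f_j$. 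Now $(E(\rho))_{\Gamma_n}$ is finite iff $\text{Ch}_\Lambda(E(\rho))$ is coprime to $\omega_n := (1+T)^{p^n} - 1$, whose roots in $\bar{\Q}_p$ are $\zeta - 1$ for $\zeta$ a $p^n$-th root of unity. Hence the required condition is $u(\alpha+1) \neq \zeta$ for every $\alpha \in \Sigma$ and every $\zeta \in \mu_{p^\infty}(\bar{\Q}_p)$.

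The root $\alpha = -1$ imposes no constraint (as $\zeta \neq 0$), so the set of forbidden values for $u$ is
\[
\{\zeta/(\alpha+1) : \alpha \in \Sigma,\ \alpha \neq -1,\ \zeta \in \mu_{p^\infty}(\bar{\Q}_p)\} \subset \bar{\Q}_p,
\]
which is countable. On the other hand, $1 + \pi O$ has the cardinality of the continuum, so one may pick $u \in 1 + \pi O$ outside this countable bad locus; the associated character $\rho$ then satisfies the required property. The only step needing care is the explicit description of how the twist transforms the characteristic ideal; once that is pinned down, the existence of $\rho$ follows immediately from a cardinality comparison.
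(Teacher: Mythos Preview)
Your argument is correct. The reduction via the structure theorem, the identification of the twist with the ring automorphism $\sigma_u$, the computation of the roots of the twisted characteristic polynomial, and the cardinality argument are all sound; the only point to note is that the bad values $\zeta/(\alpha+1)$ need not lie in $O$, but their intersection with $1+\pi O$ is still at most countable, so the avoidance step goes through. The paper itself does not give a proof but simply cites \cite[\S 2.6, p.~740]{pr}, where the same standard argument (structure theorem plus choosing $u$ to avoid countably many values) is carried out; your write-up is essentially that argument spelled out in full.
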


\begin{proof}
This is well known, for example see \cite[\S 2.6, Page 740]{pr}.
\end{proof}

\begin{lemma}\label{fgsn}
 For any cuspidal Hilbert newform $f$, the dual Selmer groups  $ X(T_f/F_\cyc)$  and  $ X(T_f^*/F_\cyc)$   are finitely generated  $O_f[[\Gamma]]$  modules.
 \end{lemma}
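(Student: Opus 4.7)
The plan is to verify finite generation via a Nakayama-type argument for compact modules over the local ring $\La := O_f[[\Gamma]]$, whose maximal ideal is $\mathfrak{m}_\La = (\varpi, \gamma-1)$, with $\varpi$ a uniformizer of $O_f$ and $\gamma$ a topological generator of $\Gamma$. Dually, a discrete $\La$-module $M$ has finitely generated Pontryagin dual if and only if $M[\mathfrak{m}_\La]$ is finite, since $(M^\vee/\mathfrak{m}_\La M^\vee)^\vee \cong M[\mathfrak{m}_\La]$. Applied to $M = S(A_f/F_\cyc)$, whose dual is $X(T_f/F_\cyc)$, and using that submodules of $\La$-cofinitely generated modules remain $\La$-cofinitely generated (as $\La$ is Noetherian), it suffices to prove that $H^1(F_S/F_\cyc, A_f)[\mathfrak{m}_\La]$ is finite.

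The key reduction is to mod-$\varpi$ coefficients. The short exact sequence $0 \to A_f[\varpi] \to A_f \stackrel{\varpi}{\to} A_f \to 0$ of $G_F$-modules yields, from the long exact cohomology sequence, that $H^1(F_S/F_\cyc, A_f)[\varpi]$ is a quotient of $H^1(F_S/F_\cyc, A_f[\varpi])$. Taking $\Gamma$-invariants, $H^1(F_S/F_\cyc, A_f)[\mathfrak{m}_\La] = H^1(F_S/F_\cyc, A_f)[\varpi]^\Gamma$ is a subquotient of $H^1(F_S/F_\cyc, A_f[\varpi])^\Gamma$. Since $\Gamma \cong \Z_p$ has $p$-cohomological dimension one, the inflation-restriction sequence gives a surjection $H^1(F_S/F, A_f[\varpi]) \twoheadrightarrow H^1(F_S/F_\cyc, A_f[\varpi])^\Gamma$. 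The module $A_f[\varpi]$ is a finite $p$-primary $G_F$-module unramified outside the finite set $S$, so $H^1(F_S/F, A_f[\varpi])$ is finite by the standard finiteness of Galois cohomology of number fields with finite coefficients. Hence $H^1(F_S/F_\cyc, A_f[\varpi])^\Gamma$ is finite, which gives the desired finite generation of $X(T_f/F_\cyc)$ over $\La$.

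For $X(T_f^*/F_\cyc)$, the cleanest route is to invoke Remark \ref{twsel}, which yields $X(T_f^*/F_\cyc) \cong X(T_{f^*}/F_\cyc) \otimes O_f(s)$ for some Hilbert newform $f^*$ (of the same level and weight, nearly ordinary at $p$) and some integer $s$. The argument of the previous paragraphs applied to $f^*$ gives finite generation of $X(T_{f^*}/F_\cyc)$ over $\La$; the twist by $O_f(s)$ is pulled back along a continuous automorphism of $\La$ (twisting $\gamma$ by a unit), which preserves finite generation. No serious obstacle appears in this argument, as it uses only the Noetherianity of $\La$ and the standard finiteness of Galois cohomology of finite modules over number fields; in particular, hypotheses \ref{irr} and \ref{Dist}, as well as the non-exceptionality condition of Theorem \ref{ctcyc}, are not required.
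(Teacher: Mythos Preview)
Your overall strategy is correct and is the standard Nakayama argument that the paper's one-line reference to \cite[Lemma 3.7]{jp} is pointing to. There is, however, one step that is not quite right as written. You assert that since $H^1(F_S/F_\cyc, A_f)[\varpi]$ is a quotient of $H^1(F_S/F_\cyc, A_f[\varpi])$, the $\Gamma$-invariants of the former are a subquotient of the $\Gamma$-invariants of the latter. Taking $\Gamma$-invariants is only left exact, so for a surjection $A' \twoheadrightarrow B'$ with kernel $K$ one has an exact sequence $A'^\Gamma \to B'^\Gamma \to H^1(\Gamma,K)$, and $B'^\Gamma$ need not be a subquotient of $A'^\Gamma$ in general.

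The fix is painless. Either note that here $K \cong A_f^{G_{F_\cyc}}/\varpi A_f^{G_{F_\cyc}}$ is finite (as $A_f^{G_{F_\cyc}}$ is $\Z_p$-cofinitely generated), so $H^1(\Gamma,K)$ is finite and the conclusion follows; or, more cleanly, reorder the argument: from finiteness of $H^1(F_S/F_\cyc, A_f[\varpi])^\Gamma$ deduce by Nakayama over $\mathbb{F}[[\Gamma]]$ that $H^1(F_S/F_\cyc, A_f[\varpi])^\vee$ is finitely generated over $\mathbb{F}[[\Gamma]]$, hence so is its quotient $(H^1(F_S/F_\cyc, A_f)[\varpi])^\vee$, which gives $H^1(F_S/F_\cyc, A_f)[\mathfrak m_\La]$ finite. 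Your treatment of $X(T_f^*/F_\cyc)$ via Remark~\ref{twsel} is fine.
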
\label{2fgs}
\proof The proof is similar to the one in elliptic modular form case (see for example \cite[Lemma 3.7]{jp}). 

\medskip

\noindent Throughout the rest of the article we make this assumption - 

\begin{hypothesis}\label{torf}
{\bf $(\text{Tor})$} = {\bf (Tor$_f$)} : For any cuspidal Hilbert newform $f$, $X(T_f/F_\cyc)$ is finitely generated torsion $O_f[[\Gamma]]$ module.
\end{hypothesis}

\begin{corollary}\label{4555}
It follows from remark \ref{twsel} that by hypothesis ({\bf Tor}), we have for  any cuspidal Hilbert newform $f$, $X(T^*_f/F_\cyc)$ is also   torsion over $O_f[[\Gamma]]$.
\end{corollary}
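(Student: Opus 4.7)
The plan is to reduce the claim about $T_f^*$ to the (Tor) hypothesis applied to a different newform, by exploiting the ``self-duality up to a Tate twist'' statement recorded in Remark \ref{twsel}. The proof should be essentially a one-line chase once the ingredients are lined up correctly.

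First, I would apply Remark \ref{twsel}, which produces a cuspidal Hilbert newform $f^{*}$ of the same level and weight as $f$, nearly ordinary at $p$, together with an integer $s \in \Z$ and an isomorphism of $O_f[[\Gamma]]$-modules
\begin{equation*}
X(T_f^{*}/F_{\cyc}) \;\cong\; X(T_{f^{*}}/F_{\cyc}) \otimes_{O_f} O_f(s).
\end{equation*}
The key observation is that $f^{*}$ is \emph{itself} a cuspidal Hilbert newform to which hypothesis (Tor) applies, giving that $X(T_{f^{*}}/F_{\cyc})$ is a finitely generated torsion $O_f[[\Gamma]]$-module (with $O_f$ enlarged if necessary to contain the Hecke eigenvalues of $f^{*}$, which by the ``same level and weight'' assertion of Remark \ref{twsel} amounts at worst to a finite unramified extension that does not affect the torsion property).

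Second, I would invoke the invariance of the torsion property under a character twist. As already noted in the discussion surrounding equation \eqref{twist}, for any character $\rho:\Gamma \lra O_f^{\times}$ the module $M(\rho) = T_{\rho}\otimes_{O_f} M$ has the same underlying $O_f$-module as $M$ with the $\Gamma$-action modified diagonally, so $M(\rho)$ is finitely generated (resp.\ torsion) over $O_f[[\Gamma]]$ if and only if $M$ is. Applying this with $M = X(T_{f^{*}}/F_{\cyc})$ and $\rho$ corresponding to the Tate twist $O_f(s)$ yields that $X(T_f^{*}/F_{\cyc})$ is finitely generated torsion over $O_f[[\Gamma]]$, which is what we want.

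There is no real obstacle: once Remark \ref{twsel} is in hand the statement is a formal consequence of (Tor) together with the character-twist stability of the torsion property. The only point that might deserve a brief remark is the coefficient ring: one should observe that the twist by $O_f(s)$ does not change the ambient Iwasawa algebra, and that enlarging $O_f$ to contain the Hecke eigenvalues of $f^{*}$ is harmless since torsion is preserved under such a faithfully flat extension. With these minor bookkeeping remarks the proof is complete.
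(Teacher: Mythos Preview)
Your proposal is correct and is exactly the argument the paper intends: the corollary is stated as an immediate consequence of Remark~\ref{twsel}, with no separate proof given, and you have simply unpacked that one-line reduction (use $X(T_f^*/F_\cyc)\cong X(T_{f^*}/F_\cyc)\otimes O_f(s)$, apply {\bf (Tor)} to $f^*$, then invoke twist-invariance of the torsion property from~\eqref{twist}). Your additional bookkeeping remarks about the coefficient ring are harmless and do not deviate from the paper's approach.
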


\begin{theorem}\label{fefibre}
Let the notation be as before.  Let $f \in S_{k,w}^{n.o}(K_{1}(N) \cap K_{11}(p^s);\mathcal{O})$ be a Hilbert newform nearly ordinary at $\mathfrak{p}|p$, which is not exceptional (as defined in theorem \ref{ctcyc}).  Assume  {\bf $(\text{Tor})$}  holds. Then the functional equation holds for $X(T_f/F_\cyc)$ i.e. we have the equality of  ideals in $O_f[[\Gamma]]$, $$Ch_{O_f[[\Gamma]]}(X(T_f/F_\cyc)) =Ch_{O_f[[\Gamma]]}( X(T^*_f/F_\cyc)^\iota) .$$

\end{theorem}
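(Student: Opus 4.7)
The strategy mirrors the algebraic approach of \cite[Theorem 2]{gr} and \cite[Theorem 4.2.1]{pr} (cf.\ \cite{jp} in the elliptic case): I use a cyclotomic twist to force the Selmer groups at every finite layer of $F_\cyc/F$ to be finite, apply Flach's generalization of the Cassels--Tate pairing at each layer $F_n$, and then pass to the inverse limit, invoking Lemmas \ref{a1} and \ref{pa1} to convert the finite-level identifications into the desired equality of characteristic ideals over $\Lambda := O_f[[\Gamma]]$.

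By the hypothesis $(\text{Tor})$ and Corollary \ref{4555}, both $X(T_f/F_\cyc)$ and $X(T_f^*/F_\cyc)^\iota$ are torsion $\Lambda$-modules, so Lemma \ref{tw} produces a character $\rho : \Gamma \to O_f^\times$ such that the $\Gamma_n$-coinvariants of both modules, after twisting by $\rho$, are finite for every $n$. Via the isomorphism \eqref{twist}, these coinvariants coincide with those of $X(T_f(\rho^{-1})/F_\cyc)$ and $X(T_f^*(\rho^{-1})/F_\cyc)^\iota$ respectively, and the control theorem (Theorem \ref{ctcyc}, where the non-exceptional hypothesis on $f$ is essential) then shows that $X(T_f(\rho^{-1})/F_n)$ and $X(T_f^*(\rho^{-1})/F_n)$ are themselves finite for each $n$, with orders matching those coinvariants up to a uniformly bounded factor independent of $n$.

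At each level $F_n$, Flach's generalized Cassels--Tate pairing now supplies a $\Gal(F_n/F)$-equivariant perfect pairing
\begin{equation*}
X(T_f(\rho^{-1})/F_n) \times X(T_f^*(\rho)/F_n) \longrightarrow \Q_p/\Z_p,
\end{equation*}
compatible along the tower $\{F_n\}_{n\geq 0}$. Dualising and taking the inverse limit over $n$ yields the pseudo-isomorphism
\begin{equation*}
\varprojlim_n X(T_f^*(\rho)/F_n)^{\iota} \;\sim\; \varprojlim_n \Bigl( \bigl(X(T_f(\rho^{-1})/F_\cyc)^\iota\bigr)^{\vee} \Bigr)^{\Gamma_n} = a^1_{\Lambda}\bigl(X(T_f(\rho^{-1})/F_\cyc)\bigr),
\end{equation*}
where the final equality is Lemma \ref{a1}; a second application of Theorem \ref{ctcyc} identifies the left-hand limit with $X(T_f^*(\rho)/F_\cyc)^\iota$ up to pseudo-isomorphism, so Lemma \ref{pa1} yields the twisted functional equation
\begin{equation*}
Ch_{\Lambda}\bigl(X(T_f(\rho^{-1})/F_\cyc)\bigr) = Ch_{\Lambda}\bigl(X(T_f^*(\rho)/F_\cyc)^\iota\bigr)
\end{equation*}
as ideals of $\Lambda$.

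To conclude, both sides of this twisted identity are the images of the corresponding sides of the untwisted functional equation under the ring automorphism of $\Lambda$ sending $\gamma \mapsto \rho(\gamma)\gamma$ (with $\iota$ interchanging $\rho$ and $\rho^{-1}$), so the twisted equality descends to $Ch_{\Lambda}(X(T_f/F_\cyc)) = Ch_{\Lambda}(X(T_f^*/F_\cyc)^\iota)$. The main technical hurdle I anticipate is ensuring $\Gamma$-equivariance and tower-compatibility of Flach's pairing at the primes above $p$: Flach's construction naturally targets the strict local conditions, so one must control the discrepancy between the strict and Greenberg Selmer groups---by the non-exceptional hypothesis this discrepancy is a bounded finite group at each layer, hence pseudo-null in the inverse limit and thus invisible to the characteristic-ideal computation.
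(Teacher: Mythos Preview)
Your proposal is correct and follows essentially the same route as the paper: twist by a character $\rho$ via Lemma~\ref{tw} so that the finite-layer Selmer groups become finite, invoke Flach's generalized Cassels--Tate pairing at each $F_n$, pass to the inverse limit using the control theorem (Theorem~\ref{ctcyc}) together with Lemma~\ref{a1}, apply Lemma~\ref{pa1}, and then untwist. One bookkeeping slip to watch: since $M^\iota(\rho)\cong (M(\rho^{-1}))^\iota$, twisting $X(T_f^*/F_\cyc)^\iota$ by $\rho$ yields $X(T_f^*(\rho)/F_\cyc)^\iota$ rather than $X(T_f^*(\rho^{-1})/F_\cyc)^\iota$, so the finite-layer groups you actually control are $X(T_f(\rho^{-1})/F_n)$ and $X(T_f^*(\rho)/F_n)$---which is precisely the pair appearing in your Flach pairing, so the argument goes through once the signs are aligned.
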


\proof 
By the assumption {\bf ($\text{Tor}$)} and corollary \ref{4555}, both $X(T_f/F_\cyc)$ and $X(T^*_f/F_\cyc)$ are torsion over $O_f[[\Gamma]]$. Thus   we can find a $\rho$ by lemma  \ref{tw} such that $X(T_f\otimes T_\rho/F_n)$ and $X(T^*_f\otimes T_{\rho^{-1}}/F_n)$ are  both finite  groups for every $n$. Then from the generalized Cassels-Tate pairing of Flach (see \cite[ 3.1.1]{pr}), we obtain that $S(T_{\rho^{-1}} \otimes A_f^*/F_n) \cong X(T_{\rho} \otimes T_f/F_n)$ for every $n$.   Hence we get
\begin{equation}\label{xf}
X(T_{\rho} \otimes T_f/F_\cyc) \cong \underset{n}\varprojlim ~ X(T_{\rho} \otimes T_f/F_n)  \cong \underset{n}\varprojlim ~S(T_{\rho^{-1}} \otimes A^*_f/F_n). 
\end{equation}
By  Theorem \ref{ctcyc} and remark  \ref{twsel}, we see that  the kernel  and the cokernel of the natural restriction map, given by $S(T_{\rho^{-1}} \otimes A^*_f/F_n) \stackrel {\alpha^*_n}{\lra} S(T_{\rho^{-1}} \otimes A^*_f/F_\cyc)^{\Gamma_n} $, are finite groups and their size  uniformly bounded independent of $n$. Thus we obtain from \eqref{xf} that the  induced map 
\begin{equation}\label{phi}
X(T_{\rho} \otimes T_f/F_\cyc) \stackrel{\phi_{\rho}}{\lra} \underset{n}\varprojlim ~S(T_{\rho^{-1}} \otimes A^*_f/F_\cyc)^{\Gamma_n}
\end{equation} is a  $O_f[[\Gamma]]$  pseudo-isomorphism.
We have  $$ \underset{n}\varprojlim ~S(T_{\rho^{-1}} \otimes A^*_f/F_\cyc)^{\Gamma_n} = \underset{n}\varprojlim ~{(X(T_{\rho^{-1}} \otimes T^*_f/F_\cyc)^\vee)}^{\Gamma_n} \stackrel {\text{Lemma} ~ \ref{a1}} \cong a^1_\La(X(T_{\rho^{-1}} \otimes T^*_f/F_\cyc)^\iota).$$ Combining this with \eqref{phi} we get  an  $O_f[[\Gamma]]$ module pseudo-isomorphism
\begin{equation}\label{thetarho}
X(T_f\otimes T_\rho/F_\cyc)  \stackrel{\theta_\rho} \lra a^{1}_{\La}(X(T^*_f\otimes T_{\rho^{-1}}/F_\cyc)^\iota). 
\end{equation}
We recall from \eqref{twist},  $X(T_f\otimes T_\rho/F_\cyc) \cong X(T_f/F_\cyc) \otimes T_{\rho^{-1}}$. On the other hand, we have $a^1_\La(X(T^*_f\otimes T_{\rho^{-1}}/F_\cyc)^\iota) \cong a^1_\La(X(T^*_f/F_\cyc)^\iota)\otimes T_{\rho^{-1}}$ (see \cite[Page 744, \S3.2.1-3.2.2]{pr}). Thus tensoring  \eqref{thetarho} with $T_{\rho}$, we get a pseudo-isomorphism of $O_f[[\Gamma]]$  modules $$X(T_f/F_\cyc)   \lra a^{1}_{\La}(X(T^*_f/F_\cyc)^\iota) $$  which is independent of $\rho$. Hence $Ch_{O_f[[\Gamma]]}(X(T_f/F_\cyc)) = Ch_{O_f[[\Gamma]]}(a^1_\La(X(T^*_f/F_\cyc)^\iota ))  $ as  ideals in $O_f[[\Gamma]]$. By applying Lemma \ref{pa1}, we get that $$ Ch_{O_f[[\Gamma]]}(X(T_f/F_\cyc))= Ch_{O_f[[\Gamma]]}(X(T^*_f/F_\cyc)^\iota ). \hfill \quad \quad \quad \quad  \qed$$


\section{functional equation for a nearly ordinary Hida family }\label{sec4}

We begin by proving a specialization result relating the `big' Selmer group with the individual Selmer groups.

\begin{theorem}\label{spl}
Assume {\bf (Irr)}, {\bf (Dist)}. Let $\mathcal{R}$ be a branch of ${\bf H}_{\mathcal{N,\OO}}$ and assume that $\mathcal{R} $ is a power series ring in many variable (i.e.  we assume that $\RR  \cong O[[X_{1},\cdots,X_{r}]]$, where $r=d + 1+ \delta_{F,p}$, here $\delta_{F,p}$ is the defect of Leopoldt's conjecture for $F$ at $p$, $d = [F:\Q]$, and $O$ is the ring of integer of some finite extension of $\Q_p$). Let  $s_\xi^\vee$ be the natural specialization map
\begin{equation}\label{splz}
\x(\T_\RR/F_\cyc)/{P_\xi   \x(\T_\RR/F_\cyc)} \stackrel{s_\xi^\vee}{\lra} X(T_{f_\xi}/F_\cyc)
\end{equation}
 \begin{itemize}
 \item  Then the kernel and the cokernel of $s_\xi^\vee$ are finitely generated $\Z_p$ modules for every   $\xi \in \mathfrak X (\RR)$ and 
 \item  There exists a non zero ideal $J$ in $\RR$ such that for any $\xi \in \mathfrak X (\RR) \setminus S_J$, the kernel and the cokernel of $s_\xi^\vee$ are finite, where the set $S_{J}$ is defined as $S_J : = \{ \xi \in \mathfrak X (\RR) \mid P_\xi \text{ does not contain } J \}$.
 \end{itemize}
In particular, assuming {\bf ($\text{Tor}$)}, the equality $$Ch_{O_{f_\xi}[[T]]}(\x(\T_\RR/F_\cyc)/{P_\xi \x(\T_\RR/F_\cyc)}) = Ch_{O_{f_\xi}[[T]]}(X(T_{f_\xi}/F_\cyc))$$  holds for all $\xi \in \mathfrak X (\RR) \setminus S_J$.

\end{theorem}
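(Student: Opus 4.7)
The plan is to study the specialization map $s_\xi^\vee$ via its Pontryagin dual
$$s_\xi \colon S(A_{f_\xi}/F_\cyc) \lra S(\A/F_\cyc)[P_\xi],$$
which is the map on Selmer groups induced by the $G_F$-equivariant identification $A_{f_\xi} \cong \A[P_\xi]$. First I would fit $s_\xi$ into a commutative diagram with exact rows, analogous to \eqref{controlcyc}, whose columns come from applying $(-)[P_\xi]$ to the defining sequences of $S(\A/F_\cyc)$ and comparing them with those of $S(A_{f_\xi}/F_\cyc)$. A snake lemma argument then reduces the analysis of $\ker s_\xi$ and $\coker s_\xi$ to controlling (i) the global map $H^1(F_S/F_\cyc, A_{f_\xi}) \to H^1(F_S/F_\cyc, \A)[P_\xi]$, and (ii) the analogous local maps at each summand $J_{v_c}$ for $v_c \mid S$.

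The key structural input is that $\RR \cong O[[X_1,\ldots,X_r]]$ is a regular local ring, so the coheight-one prime $P_\xi$ is generated by a regular sequence $(f_1,\ldots,f_r)$. Peeling off one $f_i$ at a time via the long exact cohomology sequence of
$$0 \to \A[f_i] \to \A \stackrel{f_i}{\lra} \A \to 0$$
(with $\A$ replaced iteratively by $\A[(f_1,\ldots,f_{i-1})]$), induction on $i$ shows that $\ker s_\xi$ and $\coker s_\xi$ are built from the iterated $P_\xi$-torsion and cotorsion of the $H^0$-modules $H^0(F_S/F_\cyc,\A)$, $H^0(I_{v_c},\A)$ for $v_c\nmid p$, and $H^0(G_{v_c},\A_\p^-)$ together with $H^0(I_{v_c},\A_\p^-)^{G_{v_c}}$ for $v_c \mid \p \mid p$. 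Each of these has cofinitely generated Pontryagin dual over $\RR$ that is expected to be $\RR$-torsion: hypothesis (Irr) together with Theorem \ref{2} handles the global $H^0$, while (Dist) together with the filtration \eqref{bgp} handles the local terms. Since $\RR/P_\xi$ has Krull dimension one, the $P_\xi$-cotorsion of any cofinitely generated torsion $\RR$-module is cofinitely generated over $\RR/P_\xi$ and therefore finitely generated as a $\Z_p$-module; this yields the first bullet.

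The main obstacle is producing the ideal $J$ of the second bullet. The plan is to take $J$ to be the product of the $\RR$-characteristic ideals of the Pontryagin duals of the auxiliary $\RR$-modules that appear above. The nontriviality $J \neq 0$ is the delicate point: for the global $H^0$ it rests on the absolute irreducibility of $\rho_\RR$ at the generic point of $\RR$, while for the local pieces at $\p \mid p$ it rests on (Dist) together with the fact that the Frobenius eigenvalue on $\T_\p^-$, which interpolates the Hecke eigenvalues $\alpha_\p(f_\xi)$, is not a root of unity in $\RR$ (so that $f_\xi$ is non-exceptional in the sense of Definition \ref{exceptional} for a Zariski-dense set of $\xi$). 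For any $\xi$ with $P_\xi \not\supseteq J$, each auxiliary module becomes $\RR/P_\xi$-torsion after $P_\xi$-specialization and hence finite as a $\Z_p$-module, upgrading the first bullet to the asserted finiteness. The concluding characteristic-ideal equality is then formal: under (Tor) both sides of \eqref{splz} are finitely generated torsion $O_{f_\xi}[[\Gamma]]$-modules, $s_\xi^\vee$ with finite kernel and cokernel is a pseudo-isomorphism for $\xi \notin S_J$, and characteristic ideals are invariant under pseudo-isomorphisms.
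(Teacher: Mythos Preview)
Your overall strategy---the commutative diagram comparing $S(A_{f_\xi}/F_\cyc)$ with $S(\A/F_\cyc)[P_\xi]$, the snake lemma, and the inductive peeling of the regular sequence generating $P_\xi$---matches the paper's proof. Two points, however, need correction.

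First, a minor one: under {\bf (Irr)} the global map $\eta_\xi\colon H^1(F_S/F_\cyc,\A[P_\xi]) \to H^1(F_S/F_\cyc,\A)[P_\xi]$ is an \emph{isomorphism} (the paper cites \cite[Remark~3.4.1]{g3}), so $\ker s_\xi = 0$ and $\mathrm{coker}(s_\xi^\vee)=0$ outright, not merely finite. No $H^0$ analysis is needed on the global side.

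Second, and this is the real gap: your construction of $J$ as a product of characteristic ideals does not work, because the Pontryagin duals of the local invariants $H^0(I_{v_c},\A)$ for $v_c\nmid p$ are the coinvariant modules $(\T^\ddagger)_{I_{v_c}}$, and these are typically \emph{not} $\RR$-torsion (they can have positive $\RR$-rank, e.g.\ when inertia acts through a finite quotient). So their characteristic ideals are either the unit ideal or zero depending on convention, and in neither case do you get the finiteness you want. The paper's $J$ comes instead from a general structure result of Greenberg \cite[Theorem~2.1]{g3}: for any finitely generated $\RR$-module $M$ there is a nonzero ideal $J$ such that $\mathrm{rk}_{\RR/P}\,M[P]=0$ and $\mathrm{rk}_{\RR/P}\,M/PM = \mathrm{rk}_\RR M$ for every prime $P\not\supseteq J$. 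The paper then carries out an explicit inductive rank computation (equations \eqref{ker7}--\eqref{ker18}) to show $\mathrm{rk}_{\RR/P_\xi}\,\mathrm{Ker}_r^\vee = 0$. The exceptional condition from Definition~\ref{exceptional} plays no role here; it enters only in the control theorem~\ref{ctcyc}, not in this specialization result.
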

\proof For a finitely generated $\RR$ module $M$, we define $M^\ddagger : = \text{Hom}_{\RR}(M, \RR)$. For convenience of notation, let $\T=\T_{\RR}$ and $\A=\A_{\RR}$. As before, $v_c$ will denote a prime in $F_\cyc$ lying above $S$ and  $I_c$ denotes  the inertia subgroup of $\bar{\Q}/F_\cyc$ with respect to the primes $\bar{v}/v_c$. We have the commutative diagram with the natural maps 
\begin{scriptsize}
\begin{equation}\label{splzc} 
 \xymatrix{  0 \ar[r] & S(  \mathcal A/F_\cyc)[P_\xi] \ar[r]  & H^1(F_S/F_\cyc , \mathcal A)[P_\xi]  \ar[r] &  \underset {v_c \mid S, v_c \nmid p} {\oplus}H^1( I_{v_c}, \mathcal A)[P_\xi] \underset { v_c \mid \p \mid p} {\oplus}H^1( I_{v_c}, \A_{\p}^{-}) [P_\xi] \\ 
                   0 \ar[r] & S(  A_{f_\xi}/F_\cyc) \ar[r] \ar[u]_{s_\xi}&  H^1(F_S/F_\cyc ,   A_{f_\xi}) \ar [r]  \ar[u]_{\eta_\xi}&   \underset {v_c \mid S, v_c \nmid p} {\oplus}H^1( I_{v_c}, A_{f_\xi}) \underset {v_c \mid \p \mid p} {\oplus}H^1( I_{v_c}, (A_{f_\xi})_{\p}^{-}) \ar[u]_{\delta_\xi = \oplus \delta_{v_c}^\xi}}
\end{equation}
\end{scriptsize}
Recall, $A_{f_\xi} \cong \mathcal A[P_\xi] $ and $(A_{f_\xi})_{\p}^{-} \cong \A_{\p}^{-}[P_\xi] $. 
Since we assume that the residual representation is absolutely irreducible, we have (by \cite[Remark 3.4.1]{g3})
\begin{equation}
 \xymatrix{ H^1(F_S/F_\cyc , \mathcal A[P_\xi] ) \ar[r]^{\eta_{\xi}} & H^1(F_S/F_\cyc , \mathcal A)[P_\xi] }
\end{equation} 
is an isomorphism. By Snake lemma, we get that $\text{ker}(s_\xi)$ is trivial for every $\xi$. Thus $\text{coker}(s_\xi^\vee) =0$.

Next we want to prove $\text{ker}(s_{\xi}^{\vee})$ is finitely generated $\Z_{p}$ module for every $\xi$. As there are only finitely many primes in $F_\cyc$ lying over a given prime in $F$, it is enough to show that $\text{ker}( \delta_{v_{c}}^{\xi})^{\vee}$ is finitely generated $\Z_{p}$ module for all $v_{c} \mid S$.
By our assumption, $\RR  \cong O[[X_{1},\cdots,X_{r}]]$, where $r= [F: \Q] +1+\delta_{F,p}$, where $\delta_{F,p}$ is the defect of Leopoldt's conjecture for $F$ at $p$  and $O$ is some finite extension of $\Z_p$.  Since $\RR$  is regular local and $P_{\xi}$ is a prime ideal of height $r$, we have $P_{\xi} = (x_{1},\cdots,x_{r})$, here $x_{1},\cdots,x_{r}$ is a regular sequence of prime elements of $\RR$. Define $P_{0} = (0)$ and $P_{i} = (x_{1},\cdots,x_{i})$, for $ 1 \leq i \leq r$.  Then $P_{i}$ is a prime ideal and $\A[P_{i}]$ is divisible as $\RR/P_{i}$ module. Notice that $\A[P_{i}] = (\A[P_{i-1}])[x_{i}]$, and multiplication by $x_{i}$ is surjective on $\A[P_{i-1}]$. We get a induced map of $\RR/P_{i}$ modules
$$0 \to \A^{I_{v_c}}[P_{i-1}]/ x_{i}\A^{I_{v_c}}[P_{i-1}] \to H^1( I_{v_c}, \mathcal A [P_{i}]) \to H^1( I_{v_\infty}, \mathcal A[P_{i-1}])[x_i] \to 0.$$
Thus we can obtain kernel of $ \delta_{v_{c}}^{\xi}$ via successive extensions. Let $Ker_{1} = \A^{I_{v_c}} /x_{1} \A^{I_{v_c}}$, and let $Ker_{i}$ denotes the kernel of the map $ H^1( I_{v_c}, \mathcal A [P_{i}]) \to H^1( I_{v_c}, \mathcal A)[P_{i}] $, then $Ker_{i}$ is an extension of the form, 
\begin{equation}\label{ker1}
 0 \to \A^{I_{v_c}}[P_{i-1}]/ x_{i}\A^{I_{v_c}}[P_{i-1}] \to Ker_{i} \to Ker_{(i-1)}[x_{i}] \to 0.
\end{equation}
Then in this notation $ \text{ker}(\delta_{v_{c}}^{\xi}) = \text{Ker}_{r}$. 

First, we show that $(\text{Ker}_{r})^{\vee}$ is finitely generated $\Z_{p}$ module for every $\xi$. We also denote $I_{v_c}$ by $G$ for the rest of  this proof.  Taking Pontryagin dual, we get from \eqref{ker1}, for each $i$,
\begin{equation}\label{ker2}
 0 \to \frac{\text{Ker}_{i-1}^\vee}{x_{i}\text{Ker}_{i-1}^\vee}  \to \text{Ker}^\vee_{i} \to \frac{\mathcal T^{\ddagger}_{G}}{P_{i-1}\mathcal T^{\ddagger}_G}[x_{i}] \to 0.
\end{equation}

Now for $i=1$, $\text{Ker}^\vee_{1} \cong  \mathcal T^{\ddagger}_G[x_{1}]$ is a finitely generated $\RR/{x_1}$ module. By induction, assume that for $i = 1, 2, \cdots, r-1$, $\text{Ker}^\vee_{i} $ is a finitely generated $\RR/{(x_1, \cdots, x_i)}$ module. Also, $\mathcal T^{\ddagger}_G$ being a finitely generated $\RR$ module, it is immediate that $\mathcal T^{\ddagger}_{G}/{P_{r-1}\mathcal T^{\ddagger}_G}$ is a finitely generated $\RR/P_{r-1}$ module.  Consequently,  $\mathcal T^{\ddagger}_{G}/{P_{r-1}\mathcal T^{\ddagger}_G}[x_r]$ is a finitely generated $\RR/P_{r}$ module. Also by induction hypothesis, $\text{Ker}_{r-1}^\vee /{x_{r}}$ is a finitely generated $\RR/{(P_{r-1}, x_r)} \cong R/P_r$ module.  Hence, we deduce from \eqref{ker2}, that $\text{Ker}_{r}^\vee$ is a finitely generated $\RR/P_r$ module. Recall, in this notation, $P_r = (x_1,\cdots, x_r) = P_\xi$ and hence $\RR/P_\xi \cong O,$ a finite extension of $\Z_p$. This finishes the proof of the first assertion of the theorem.

For the second assertion of the theorem, we have to show that there exists a non zero ideal $J$ in $\RR$ such that for any $\xi \in \mathfrak X (\RR)$ such that  $\text{ker}(\xi) = P_\xi \nsupseteq J$, the rank 
\begin{equation}\label{ker3}
\text{rk}_{\RR/(x_1, \cdots, x_r)} ~ \text{Ker}_r^\vee =0.
\end{equation}
 By \cite[Theorem 2.1]{g3}, corresponding to the $\RR$ module $\mathcal T^{\ddagger}_{G}$ there exists an ideal $J \neq 0$ such that the  following two equations hold 
 \begin{equation}\label{ker4}
 \text{rk}_{\RR/P} ~ \mathcal T^{\ddagger}_{G}[P] =0 \text{ for any prime ideal  P } \nsupseteq J 
 \end{equation}

 \begin{equation}\label{ker5}
 \text{rk}_{\RR/P} ~ \frac{\mathcal T^{\ddagger}_{G}}{P\mathcal T^{\ddagger}_{G}} = \text{rk}_{\RR} ~ \mathcal T^{\ddagger}_{G} \text{ for any prime ideal } P \nsupseteq J
 \end{equation} 
 We will go on to show that this $J$ will work for us. Let  $P_\xi = (x_1, \cdots, x_r)  \nsupseteq J$. Then as $(x_1) \nsupseteq J$, we have $$\text{rk}_{\RR/x_1} ~ \mathcal T^{\ddagger}_{G}[x_1] =0.$$ But $\text{Ker}^\vee_1 \cong \mathcal T^{\ddagger}_{G}[x_1]$. Thus we get that $$\text{rk}_{\RR/x_1} ~ \text{Ker}^\vee_1 =0.$$ Let us assume for $i= 1, 2, \cdots, r-1$, 
 \begin{equation}\label{ker6}
 \text{rk}_{\RR/(x_1, \cdots, x_i)} ~ \text{Ker}^\vee_i=0.
 \end{equation}
 Notice that 
\begin{align}\label{ker7}
   \text{rk}_{\RR/(x_1,\cdots, x_r)} ~ \frac{\mathcal T^{\ddagger}_{G}}{P_{r-1}T^{\ddagger}_{G}}[x_r] & =  \text{rk}_{\RR/(x_1,\cdots, x_r)} ~ \frac{\mathcal T^{\ddagger}_{G}}{(P_{r-1},x_r)T^{\ddagger}_{G}} - \text{rk}_{\RR/(x_1,\cdots, x_{r-1})} ~ \frac{\mathcal T^{\ddagger}_{G}}{P_{r-1}T^{\ddagger}_{G}} \nonumber \\
   &= \text{rk}_{\RR/(x_1,\cdots, x_r)} ~ \frac{\mathcal T^{\ddagger}_{G}}{P_rT^{\ddagger}_{G}} - \text{rk}_{\RR/(x_1,\cdots, x_{r-1})} ~ \frac{\mathcal T^{\ddagger}_{G}}{P_{r-1}T^{\ddagger}_{G}}.
  \end{align}
By our assumption that $P_r \nsupseteq J$, we deduce from \eqref{ker4} that  
$$\text{rk}_{\RR/P_r} ~\frac{\mathcal T^{\ddagger}_{G}}{P_r\mathcal T^{\ddagger}_{G}} = \text{rk}_{\RR/P_{r-1}} ~\frac{\mathcal T^{\ddagger}_{G}}{P_{r-1}\mathcal T^{\ddagger}_{G}} = \text{rk}_{\RR} ~\mathcal T^{\ddagger}_{G}.$$ 
Hence we deduce from \eqref{ker7} that   $\text{rk}_{\RR/P_r} ~ \frac{\mathcal T^{\ddagger}_{G}}{P_{r-1}T^{\ddagger}_{G}}[x_r] =0$. 

Thus using \eqref{ker2}, to finish the proof of the theorem, it suffices to show that  
\begin{equation}\label{ker8}
\text{rk}_{\RR/P_r} ~ \text{Ker}^\vee_{r-1}/{x_r }=0.
\end{equation}
 Now \begin{align}\label{ker9}
   \text{rk}_{\RR/(x_1,\cdots, x_r)} ~ \frac{\text{Ker}^\vee_{r-1}}{x_r \text{Ker}^\vee_{r-1}} & =  \text{rk}_{\RR/(x_1,\cdots, x_{r-1})} ~  \text{Ker}^\vee_{r-1} + \text{rk}_{\RR/(x_1,\cdots, x_{r})} ~ \text{Ker}^\vee_{r-1}[x_r]  \nonumber \\
   &= \text{rk}_{\RR/(x_1,\cdots, x_{r})} ~ \text{Ker}^\vee_{r-1}[x_r],   
  \end{align}
 where the last equality follows from \eqref{ker6}. Thus to prove \eqref{ker8}, we are further reduced to showing 
  \begin{equation}\label{ker10}
  \text{rk}_{\RR/(x_1,\cdots, x_{r})} ~ \text{Ker}^\vee_{r-1}[x_r]   =0
  \end{equation}
 Now we have the exact sequence, 
 \begin{equation}\label{ker11}
 0 \to \frac{\text{Ker}_{r-2}^\vee }{x_{r-1}\text{Ker}_{r-2}^\vee }[x_r]  \to \text{Ker}^\vee_{r-1}[x_r] \to \frac{\mathcal T^{\ddagger}_{G}}{P_{r-2}\mathcal T^{\ddagger}_G}[(x_{r-1}, x_r)]
\end{equation}
Proceeding similarly as in  \eqref{ker7}, we deduce  that 
\begin{align}\label{ker12}
 \text{rk}_{\RR/P_r} ~ \frac{\mathcal T^{\ddagger}_{G}}{P_{r-2}\mathcal T^{\ddagger}_G}[(x_{r-1}, x_r)] & =   \text{rk}_{\RR/P_r} ~ \frac{\mathcal T^{\ddagger}_{G}}{P_rT^{\ddagger}_{G}} - \text{rk}_{\RR/P_{r-2}} ~ \frac{\mathcal T^{\ddagger}_{G}}{P_{r-2}T^{\ddagger}_{G}} \nonumber \\
   &= 0.
  \end{align}
   
Thus   \begin{align}\label{ker13}
   \text{rk}_{\RR/P_r} ~ \text{Ker}^\vee_{r-1}[x_r]  & =  \text{rk}_{\RR/P_r} ~ \frac{\text{Ker}_{r-2}^\vee }{x_{r-1}\text{Ker}_{r-2}^\vee }[x_r]   \nonumber \\
   &= \text{rk}_{\RR/P_r} ~ \frac{\text{Ker}_{r-2}^\vee }{(x_{r-1}, x_r) \text{Ker}_{r-2}^\vee } - \text{rk}_{\RR/P_{r-1}} ~ \frac{\text{Ker}_{r-2}^\vee }{x_{r-1} \text{Ker}_{r-2}^\vee }.   
  \end{align}
 Now, note that 
 \begin{align}\label{ker14}
 \text{rk}_{\RR/P_r} ~ \frac{\text{Ker}_{r-2}^\vee }{(x_{r-1}, x_r) \text{Ker}_{r-2}^\vee } & =  \text{rk}_{\RR/P_{r-2}} ~ \text{Ker}_{r-2}^\vee  + \text{rk}_{\RR/P_r} ~ \text{Ker}_{r-2}^\vee[(x_{r-1}, x_r)] \nonumber \\ 
 & =\text{rk}_{\RR/P_r} ~ \text{Ker}_{r-2}^\vee[(x_{r-1}, x_r)].
 \end{align}
Here the last equality  follows from the induction hypothesis in \eqref{ker6}. Using this in \eqref{ker13}, we deduce that
\begin{align}\label{ker15}
   \text{rk}_{\RR/P_r} ~ \text{Ker}^\vee_{r-1}[x_r]  & = \text{rk}_{\RR/P_r} ~ \text{Ker}_{r-2}^\vee[(x_{r-1}, x_r)] - \text{rk}_{\RR/P_{r-1}} ~ \frac{\text{Ker}_{r-2}^\vee }{x_{r-1} \text{Ker}_{r-2}^\vee }  
 \end{align}
 Recall, from \eqref{ker2}, for $i=r-1$, we have the exact sequence
 \begin{equation}\label{ker16}
 0 \to \frac{\text{Ker}_{r-2}^\vee}{x_{r-1}\text{Ker}_{r-2}^\vee}  \to \text{Ker}^\vee_{r-1} \to \frac{\mathcal T^{\ddagger}_{G}}{P_{r-2}\mathcal T^{\ddagger}_G}[x_{r-1}] \to 0.
 \end{equation}
 Using induction hypothesis \eqref{ker6} in  \eqref{ker16}, we deduce that 
 $$\text{rk}_{\RR/P_{r-1}} ~ \frac{\text{Ker}_{r-2}^\vee }{x_{r-1} \text{Ker}_{r-2}^\vee }   = 0.$$
 Hence we obtain from \eqref{ker15} that 
 \begin{equation}\label{ker17}
  \text{rk}_{\RR/P_r} ~ \text{Ker}^\vee_{r-1}[x_r] = \text{rk}_{\RR/P_r} ~ \text{Ker}_{r-2}^\vee[(x_{r-1}, x_r)]
  \end{equation}
 Proceeding in a similar way, we deduce that 
\begin{align}\label{ker18}
\text{rk}_{\RR/P_r} ~ \text{Ker}^\vee_r & = \text{rk}_{\RR/P_r} ~ \text{Ker}^\vee_{r-1}[x_r] \nonumber \\ 
& = \text{rk}_{\RR/P_r} ~ \text{Ker}_{r-2}^\vee[(x_{r-1}, x_r)] \nonumber \\ 
& = \text{rk}_{\RR/P_r} ~ \text{Ker}^\vee_1[(x_2, \cdots, x_r)] \nonumber \\
& = \text{rk}_{\RR/P_r} ~ \mathcal T^{\ddagger}_{G}  [x_1][( x_2, \cdots,x_r)] \nonumber \\ 
& = \text{rk}_{\RR/P_r} ~ \mathcal T^{\ddagger}_{G}  [(x_1, x_2, \cdots,x_r)] \nonumber \\ 
& = \text{rk}_{\RR/P_r} ~ \mathcal T^{\ddagger}_{G}  [(x_1, x_2, \cdots,x_r)] \nonumber \\ 
& = \text{rk}_{\RR/P_r} ~ \mathcal T^{\ddagger}_{G} [P_r] =0.
\end{align}
 Here the last equality follows from our hypothesis that $P_r = P_\xi \nsupseteq J$. This finishes the proof of the second assertion of the theorem.
\qed

\begin{rem}\label{splzd}
Let the assumptions be as in Theorem \ref{spl}. Then proceeding as in  the  proof of Theorem \ref{spl}, we can  deduce the corresponding theorem for $\T_\RR^*$ i.e. there exists a non zero ideal $J^*$ in $\RR$ such that for any $\xi \in \mathfrak X (\RR) \setminus S_{J^*}$, the kernel and the cokernel of ${s^*}_\xi^\vee$ are finite, where  $S_{J^*} : = \{  \xi \in \mathfrak X (\RR) \mid P_\xi \text{ does not contain } J^* \}$.
 Consequently under the assumption {\bf ($\text{Tor})$}, $$Ch_{O_{f_\xi}[[\Gamma]]}(\x(\T^*_\RR/F_\cyc)/{P_\xi   \x(\T^*_\RR/F_\cyc)}) =Ch_{O_{f_\xi}[[\Gamma]]}( X(T^*_{f_\xi}/F_\cyc))$$   for every $\xi \in \mathfrak X(\RR) \setminus S_{J^*}$. 
\end{rem}

\begin{rem}\label{splzdd}
 Under the assumption {\bf ($\text{Tor}$)}, by applying the involution $\iota$ we obtain,
\begin{align*}
Ch_{O_{f_\xi}[[\Gamma]]}(\x(\T^*_\RR/F_\cyc)^\iota/{P_\xi   \x(\T^*_\RR/F_\cyc)^\iota}) = Ch_{O_{f_\xi}[[\Gamma]]}((\x(\T^*_\RR/F_\cyc)/{P_\xi   \x(\T^*_\RR/F_\cyc)})^\iota) \\
=  Ch_{O_{f_\xi}[[\Gamma]]}(X(T^*_{f_\xi}/F_\cyc)^\iota).
\end{align*}
 for every $\xi \in \mathfrak X(\RR) \setminus S_{J^*}$.
\end{rem}

\begin{lemma}\label{fgsnmlt}
Assume {\bf (Irr)}, {\bf (Dist)} and that $\RR $ is a power series ring in many  variables. Then $\mathcal X(\T_\RR/F_\cyc)$ and $\mathcal X(\T_\RR^*/F_\cyc)$ are finitely generated  $\RR[[\Gamma]]$ modules.
\end{lemma}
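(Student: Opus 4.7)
The plan is to apply topological Nakayama's lemma for compact modules over the complete local ring $\RR[[\Gamma]]$, using the specialization result Theorem~\ref{spl} to bootstrap from the individual newform case already handled in Lemma~\ref{fgsn}. Since $\RR \cong O[[X_1,\dots,X_r]]$ by hypothesis and $\Gamma \cong \Z_p$, the ring $\RR[[\Gamma]] \cong \RR[[T]]$ is a complete Noetherian local ring with maximal ideal $\mathfrak m := (\mathfrak m_\RR, T)$ and finite residue field. Moreover, $\mathcal X(\T_\RR/F_\cyc)$ is the Pontryagin dual of a discrete $\RR[[\Gamma]]$-module and is therefore a compact topological $\RR[[\Gamma]]$-module. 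By the topological Nakayama lemma, to prove finite generation it suffices to show that $\mathcal X(\T_\RR/F_\cyc)/\mathfrak m \mathcal X(\T_\RR/F_\cyc)$ is a finite group.

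First I would pick any arithmetic point $\xi \in \mathfrak X(\RR)$. By Theorem~\ref{spl} (first assertion), the specialization map
$$s_\xi^\vee : \mathcal X(\T_\RR/F_\cyc)/P_\xi \mathcal X(\T_\RR/F_\cyc) \longrightarrow X(T_{f_\xi}/F_\cyc)$$
has trivial cokernel (this is shown in the proof of that theorem using absolute irreducibility of $\bar{\rho}_\RR$) and kernel which is a finitely generated $\Z_p$-module. Since $P_\xi \subset \mathfrak m_\RR$, the quotient $\RR/P_\xi$ is identified with the ring of integers $O_{f_\xi}$ of a finite extension of $\Q_p$. By Lemma~\ref{fgsn}, $X(T_{f_\xi}/F_\cyc)$ is finitely generated as an $O_{f_\xi}[[\Gamma]]$-module, and the kernel of $s_\xi^\vee$, being finitely generated over $\Z_p \subset O_{f_\xi}$, is likewise finitely generated over $O_{f_\xi}[[\Gamma]]$. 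The short exact sequence
$$0 \to \ker(s_\xi^\vee) \to \mathcal X(\T_\RR/F_\cyc)/P_\xi \mathcal X(\T_\RR/F_\cyc) \to X(T_{f_\xi}/F_\cyc) \to 0$$
thus exhibits $\mathcal X(\T_\RR/F_\cyc)/P_\xi \mathcal X(\T_\RR/F_\cyc)$ as a finitely generated $O_{f_\xi}[[\Gamma]]$-module.

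To conclude, I would observe that the image of $\mathfrak m \subset \RR[[\Gamma]]$ under the surjection $\RR[[\Gamma]] \twoheadrightarrow (\RR/P_\xi)[[\Gamma]] = O_{f_\xi}[[\Gamma]]$ is exactly the maximal ideal $\mathfrak m'$ of $O_{f_\xi}[[\Gamma]]$, because $\mathfrak m_\RR$ maps onto the maximal ideal of $O_{f_\xi}$ and $T$ is sent to $T$. Consequently there is a natural isomorphism
$$\mathcal X(\T_\RR/F_\cyc)/\mathfrak m \mathcal X(\T_\RR/F_\cyc) \;\cong\; \bigl(\mathcal X(\T_\RR/F_\cyc)/P_\xi \mathcal X(\T_\RR/F_\cyc)\bigr)\big/\mathfrak m'\bigl(\mathcal X(\T_\RR/F_\cyc)/P_\xi \mathcal X(\T_\RR/F_\cyc)\bigr),$$
which, being the quotient of a finitely generated $O_{f_\xi}[[\Gamma]]$-module by the maximal ideal, is a finite-dimensional vector space over the finite residue field of $O_{f_\xi}$, and hence finite. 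Topological Nakayama then gives the desired finite generation of $\mathcal X(\T_\RR/F_\cyc)$ over $\RR[[\Gamma]]$. The assertion for $\mathcal X(\T_\RR^*/F_\cyc)$ follows by the identical argument, invoking Remark~\ref{splzd} in place of Theorem~\ref{spl} and the second half of Lemma~\ref{fgsn} (finite generation of $X(T^*_{f_\xi}/F_\cyc)$) in place of the first.

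I do not expect a serious obstacle here: the only mildly delicate point is verifying that the maximal ideal of $\RR[[\Gamma]]$ restricts correctly to that of $O_{f_\xi}[[\Gamma]]$ after quotienting by $P_\xi$, and that the topological Nakayama lemma applies in the form needed (which it does, because $\RR[[\Gamma]]$ is Noetherian local complete and $\mathcal X$ is compact). Everything else is essentially a direct citation of results already established in the paper.
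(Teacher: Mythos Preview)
Your proposal is correct and follows precisely the approach indicated in the paper: invoke Theorem~\ref{spl} (and Remark~\ref{splzd} for $\T_\RR^*$) together with Lemma~\ref{fgsn} to see that $\mathcal X(\T_\RR/F_\cyc)/P_\xi\mathcal X(\T_\RR/F_\cyc)$ is finitely generated over $O_{f_\xi}[[\Gamma]]$, then conclude via topological Nakayama. You have simply spelled out the details that the paper's one-line proof leaves implicit.
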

\begin{proof} This result  follows  from Theorem \ref{spl}, remark \ref{splzd}, Lemma \ref{fgsn} and  topological Nakayama's Lemma \cite[Corollary 5.2.18]{nsw}.    \end{proof}

\begin{corollary}\label{bselt}
Let the assumption be as in  Theorem \ref{spl}. Then under  hypothesis {\bf ($\text{Tor}$)},  $\x(\T_\RR/F_\cyc)$ and $\x(\T^*_\RR/F_\cyc)$ are finitely generated torsion  $\RR[[\Gamma]]$ modules. 

\end{corollary}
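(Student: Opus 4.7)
The finite generation of both modules is immediate from Lemma \ref{fgsnmlt}, so the substantive content of the corollary is the torsion assertion. My plan is to deduce the torsion of $\x(\T_\RR/F_\cyc)$ over $\RR[[\Gamma]]$ by specialization at a generic arithmetic point, invoking hypothesis $(\text{Tor})$ at the fibre, and then lifting the torsion property back to the total space via a semicontinuity-of-rank argument. The assertion for $\x(\T^*_\RR/F_\cyc)$ will follow by the same argument using Remark \ref{splzd} together with Corollary \ref{4555}.

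Fix any $\xi \in \mathfrak X(\RR) \setminus S_J$, where $J$ is the non-zero ideal of $\RR$ furnished by Theorem \ref{spl}; such $\xi$ exist in abundance. The second bullet point of Theorem \ref{spl} guarantees that the kernel and cokernel of the specialization map
\[
s_\xi^\vee \colon \x(\T_\RR/F_\cyc)/P_\xi \x(\T_\RR/F_\cyc) \longrightarrow X(T_{f_\xi}/F_\cyc)
\]
are both finite. Meanwhile, hypothesis $(\text{Tor})$ asserts that $X(T_{f_\xi}/F_\cyc)$ is torsion over $O_{f_\xi}[[\Gamma]] = \RR[[\Gamma]]/P_\xi\RR[[\Gamma]]$. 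Combining these two facts, the fibre $\x(\T_\RR/F_\cyc)/P_\xi \x(\T_\RR/F_\cyc)$ is itself torsion over the integral domain $\RR[[\Gamma]]/P_\xi\RR[[\Gamma]]$.

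To propagate torsion from the fibre back to the total ring $\RR[[\Gamma]]$, I will use the elementary observation that the generic rank of a finitely generated module cannot drop upon reduction modulo a prime. Concretely, choose a finite presentation
\[
\RR[[\Gamma]]^m \xrightarrow{\phi} \RR[[\Gamma]]^n \longrightarrow \x(\T_\RR/F_\cyc) \longrightarrow 0,
\]
and let $\bar\phi$ be its reduction modulo $P_\xi\RR[[\Gamma]]$. Every $k \times k$ minor of $\bar\phi$ is the image of the corresponding minor of $\phi$, so $\mathrm{rank}(\bar\phi) \leq \mathrm{rank}(\phi)$, whence
\[
\mathrm{rank}_{O_{f_\xi}[[\Gamma]]}\!\bigl(\x(\T_\RR/F_\cyc)/P_\xi \x(\T_\RR/F_\cyc)\bigr) \;\geq\; \mathrm{rank}_{\RR[[\Gamma]]}\x(\T_\RR/F_\cyc).
\]
The left-hand side vanishes by the previous step, forcing the generic rank of $\x(\T_\RR/F_\cyc)$ over $\RR[[\Gamma]]$ to be zero, i.e.\ the module is torsion. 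Running the same argument with $\T_\RR$ replaced by $\T^*_\RR$ and $J$ replaced by the ideal $J^*$ of Remark \ref{splzd} disposes of $\x(\T^*_\RR/F_\cyc)$.

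I do not anticipate a genuine obstacle: the real work has already been performed in Theorem \ref{spl} (control at the fibre with finite kernel and cokernel) and Lemma \ref{fgsnmlt} (finite generation), while the rank-semicontinuity inequality used in the final step is a routine consequence of writing a presentation matrix and comparing minor ideals. The only point requiring mild care is ensuring that $\xi$ is chosen in the complement of both $S_J$ and $S_{J^*}$ when one wishes to treat $\T_\RR$ and $\T^*_\RR$ simultaneously, which is possible since each exceptional set is defined by a non-zero ideal of $\RR$.
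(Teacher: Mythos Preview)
Your proof is correct and follows essentially the same strategy as the paper: specialize at an arithmetic point $\xi \notin S_J$, use Theorem~\ref{spl} together with {\bf (Tor)} to see that $\x(\T_\RR/F_\cyc)/P_\xi$ is torsion over $O_{f_\xi}[[\Gamma]]$, and then lift the torsion property to $\RR[[\Gamma]]$. The only cosmetic difference is that you phrase the last step via rank semicontinuity through a presentation matrix, whereas the paper localizes at the prime $P_\xi\RR[[\Gamma]]$ and applies Nakayama's lemma to obtain $\x(\T_\RR/F_\cyc)_{(P_\xi)} = 0$; these are two formulations of the same commutative-algebra fact.
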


\proof We will prove  for $\x(\T_\RR/F_\cyc)$ and a similar argument works for $\x(\T^*_\RR/F_\cyc)$. Choose any $\xi \in \mathfrak X(\RR) \setminus S_J$. It suffices  to show the localization at $\RR \setminus P_\xi$, $\x(\T_\RR/F_\cyc)_{(P_\xi)}$ = 0.  By  {\bf (Tor)} and  Theorem \ref{spl}, $\frac{\x(\T_\RR/F_\cyc)}{P_\xi  \x(\T_\RR/F_\cyc)}$ is a torsion  $\RR/{P_\xi}$ module. From this, using  localization argument and Nakayama's Lemma, we get $\x(\T_\RR/F_\cyc)_{(P_\xi)}$ = 0.  \qed

\begin{proposition}\label{lift}
Let $M$ and $N$ be two finitely generated torsion modules over the $r +1 $  variable power series ring $R_O = O[[W, T]]$ where W=$(X_{1},\cdots,X_{r})$,  $O$  is the ring of integer of a finite extension of $\Q_p$. Let $\{l_i\}_{i \in \N}$ be an infinite set of co-height 1 prime ideals in $O[[W]]$ such that 
\begin{enumerate}
\item $O[[W]]/ l_i$ is a finite extension of $O$, for any $i$. 
\item For each $l_i$, both $M/{l_i M} $ and $N/{l_i N} $ are torsion over $R_O/l_i$ and 
\item for every $i$, the image of $Ch_{R_O}(M)$ (resp. $Ch_{R_O}(N)$) in  $R_O/l_i$ equals \\
$Ch_{R_O/l_i}(M/{l_i M})$ (resp. $Ch_{R_O/l_i}(N/{l_i N})$), as ideals in   $R_O/l_i$.
\end{enumerate}
Then the equality of the ideals in $Ch_{R_O/l_i}(M/{l_i M}) = Ch_{R_O/l_i}(N/{l_i N})$ in  $R_O/l_i$ for every $l_i$ implies the equality of ideals $Ch_{R_O}(M) = Ch_{R_O}(N)$ in $R_O$.

\end{proposition}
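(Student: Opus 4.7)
\emph{Plan.} Since $R_O = O[[W, T]] = O[[X_1, \ldots, X_r, T]]$ is a regular local ring, it is a UFD, so characteristic ideals of finitely generated torsion $R_O$-modules are principal. I would write $Ch_{R_O}(M) = (\alpha)$ and $Ch_{R_O}(N) = (\beta)$ with $\alpha, \beta \in R_O \setminus \{0\}$. For each $i$, the quotient $R_O/l_i R_O \cong O_i[[T]]$, where $O_i := O[[W]]/l_i$ is a DVR by hypothesis (1), is itself a regular local ring of Krull dimension $2$, hence a UFD. By hypothesis (3) together with the assumed equality $Ch_{R_O/l_i}(M/l_iM) = Ch_{R_O/l_i}(N/l_iN)$, the images satisfy $(\bar\alpha) = (\bar\beta)$ in $O_i[[T]]$ for every $i$. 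The goal is to show $(\alpha) = (\beta)$ in $R_O$, i.e., that $\alpha$ and $\beta$ are associates.

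I would proceed by case analysis on divisibility in the UFD $R_O$. First I would handle the comparable case: suppose $\beta \mid \alpha$, say $\alpha = c\beta$ in $R_O$. Reducing modulo $l_i R_O$ gives $\bar\alpha = \bar c\,\bar\beta$ in the domain $O_i[[T]]$. Now $\bar\beta \neq 0$, since by (2) the ideal $Ch_{R_O/l_i}(N/l_i N)$ is nonzero and by (3) it equals $(\bar\beta)$. Combined with $(\bar\alpha) = (\bar\beta)$, this forces $\bar c$ to be a unit in $O_i[[T]]$. Since $l_i R_O \subset \mathfrak{m}_{R_O}$, the surjection $R_O \twoheadrightarrow R_O/l_i R_O$ pulls back the maximal ideal to $\mathfrak{m}_{R_O}$, so an element of $R_O$ is a unit if and only if its image in $R_O/l_i R_O$ is a unit. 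Hence $c \in R_O^\times$, giving $(\alpha) = (\beta)$. The symmetric case $\alpha \mid \beta$ is handled identically.

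For the incomparable case, I would set $d := \gcd(\alpha, \beta) \in R_O$ and write $\alpha = d\alpha'$, $\beta = d\beta'$ with $\gcd(\alpha', \beta') = 1$ and both $\alpha', \beta'$ non-units. Reducing modulo $l_i R_O$ and canceling the nonzero element $\bar d$ in the domain $O_i[[T]]$ yields $(\bar\alpha') = (\bar\beta')$ for every $i$. Writing $\bar\alpha' = \bar w_i\,\bar\beta'$ with a unit $\bar w_i \in (O_i[[T]])^\times$ and lifting it to a unit $w_i \in R_O^\times$ (using the same pull-back-of-maximal-ideals property as above) produces $\alpha' - w_i \beta' \in l_i R_O$, hence
\[
\alpha' \;\in\; \bigcap_{i}\,\bigl((\beta') + l_i R_O\bigr).
\]
If this intersection can be shown to equal $(\beta')$, then $\beta' \mid \alpha'$, and together with $\gcd(\alpha', \beta') = 1$ and $\beta'$ non-unit this forces $\alpha' = 0$, contradicting $\alpha \neq 0$; one thus returns to the comparable case to conclude.

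The hardest part will be establishing the equality $\bigcap_i \bigl((\beta') + l_i R_O\bigr) = (\beta')$ in the incomparable case. Infinitude of $\{l_i\}$ alone is not enough; one must extract a density-type property from the hypotheses (1)--(3). In the intended application (Theorem \ref{mnm}), the $l_i$ arise from arithmetic specializations, which are Zariski-dense in $\mathrm{Spec}(O[[W]])$ so that $\bigcap_i l_i = 0$ in $O[[W]]$; the desired intersection equality should then follow by applying the Krull intersection theorem to the local ring $R_O/(\beta')$. Converting this density into a usable form at the level of the coarse hypotheses in the proposition is the main technical hurdle the statement encapsulates.
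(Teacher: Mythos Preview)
Your comparable case and the reduction of the incomparable case to the identity $\bigcap_i \bigl((\beta') + l_i R_O\bigr) = (\beta')$ are fine, but your proposed resolution of that identity is a genuine gap. The Krull intersection theorem concerns $\bigcap_n \mathfrak m^n$, not an intersection over an infinite family of distinct ideals; even granting $\bigcap_i l_i = 0$ in $O[[W]]$, this cannot be transported to $R_O/(\beta')$ because that quotient need not be flat, or even torsion-free, over $O[[W]]$. For instance if $\beta'$ happens to lie in $O[[W]]$ (say $\beta' = X_1$), then $R_O/(\beta')$ is annihilated by $X_1$ as an $O[[W]]$-module and the images of the $l_i$ there carry no useful information.

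The paper supplies exactly the missing mechanism. It first separates the characteristic element, via the structure theorem, into its $\pi_O$-power part (handled by a direct comparison of exponents) and a $\pi_O$-prime part $h(\mathbb T)$. For $h$ it invokes a Weierstrass-type preparation (\cite[Lemma~3.8]{o3}): after a finite base change and a linear change of coordinates, $h$ becomes associate to a distinguished polynomial in $T$, so that $R_{O'}/h$ is \emph{finite flat} over $O'[[W]]$. Finite flatness is precisely what upgrades the elementary injection $O'[[W]]/(l_{1,j}\cdots l_{k,j}) \hookrightarrow \prod_{i\le k} O'[[W]]/l_{i,j}$ to an injection modulo $h$; vanishing of the image of $g$ in each factor and an inverse limit then give $g \in (h)$ at that level. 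The paper runs this inductively on the $W$-variables, stripping one generator of $l_i$ at a time, which is why the application (Remark~\ref{egl}) appeals to Lemma~\ref{IJ} to supply infinitely many $l_i$ sharing prescribed initial generators. Without the Weierstrass/flatness step your intersection identity does not follow from the stated hypotheses, and Krull intersection cannot substitute for it.
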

\proof

For $r=1$, the result is essentially continued in \cite[\S 3]{o3}. Suppose $l_{i} = (l_{i,1},l_{i,2},\cdots,l_{i,r})$, denote by $l_{i}^{(j)} = (l_{i,1},l_{i,2},\cdots,l_{i,j})$.  Note that, $M \otimes R_O/l_{i}^{(j)} = (M \otimes R_O/l_{i}^{(j-1)}) \otimes R/l_{i,j}$. We claim that, if $Ch_{R_{O}/(l_{i,1},l_{i,2},\cdots,l_{i,j})} M \otimes R_{O}/(l_{i,1},l_{i,2},\cdots,l_{i,j}) = Ch_{R_{O}/(l_{i,1},l_{i,2},\cdots,l_{i,j})} N \otimes R_{O}/(l_{i,1},l_{i,2},\cdots,l_{i,j})$ is true for infinitely many $l_{i}^{(j)}$, for which first $(j-1)$ generators are same (that is $l_{i}^{(j-1)}$'s are same for all $i$), then we have,
$$Ch_{R_{O}/(l_{i}^{(j-1)})} M \otimes R_{O}/(l_{i}^{(j-1)}) = Ch_{R_{O}/(l_{i}^{(j-1)})} N \otimes R_{O}/(l_{i}^{(j-1)}).$$
Hence we prove the result by applying this to $j=r,r-1,\cdots,1$.

We use multivariable notation $h(\mathbb{T})$ to denote polynomial $h(X_{1},\cdots,X_{r},T)$. As $M$ and $N$ are finitely generated torsion module over $(r+1)$ variable Iwasawa algebra, using the structure theorem of Iwasawa modules, we fix $R_{O}$ module pseudo-isomorphisms $\phi$ and $\psi$ respectively,
$$M \stackrel{\phi}{\lra}  \underset{i}{\oplus} R_O/\pi_O^{\mu_i} \underset{j}{\oplus} R_O/{h_j(\mathbb T)^{\lambda_j}} \quad  \text{and} $$
$$N \stackrel{\psi}{\lra}  \underset{i'}{\oplus} R_O/\pi_O^{\mu_{i'}'} \underset{j'}{\oplus} R_O/{g_{j'}(\mathbb T)^{\lambda_{j'}'}}. $$ 
Here $\pi_O$ is a uniformizing parameter for $O$. Set $h(\mathbb T) = \prod h_j(\mathbb T)^{\lambda_j}$, $g(\mathbb T) = \prod g_{j'}(\mathbb T)^{\lambda_{j'}}$ and $\mu = \sum  \mu_i $,  $\mu' = \sum  \mu_{i'} $.

We will show that  $Ch_{R_O/l_{i}^{(j-1)}}(N \otimes R_{O}/l_{i}^{(j-1)}) \subset  Ch_{R_O/l_{i}^{(j-1)}}(M \otimes R_{O}/l_{i}^{(j-1)})$. Interchanging $M$ and $N$, we will get the equality.   Clearly, it suffices to show that  the image of $\pi_O^{\mu'}$ is zero in $(R_{O}/l_{i}^{(j-1)})/{\pi_O^{\mu}}$ and the image of $g(\mathbb T)$ is zero in $(R_O/l_{i}^{(j-1)})/{h(\mathbb T)}$.

If $h(\mathbb T)$ is a unit in $R_O/l_{i}^{(j-1)}$ then obviously the image of  $g(\mathbb T)$ in $(R_O/l_{i}^{(j-1)})/{h(\mathbb T)}$ is zero. So we assume that $h(\mathbb T)$ is not a unit in $R_O/l_{i}^{(j-1)}$.  Then by  \cite[Lemma 3.8]{o3} there is a finite extension $O''$ of  $O$ such that by a change of coordinate by a linear transform, we may assume that $h(\mathbb T) = u(\mathbb T)f(T)$ where $u(\mathbb T)$ is a unit in $R_{O''}$ and $f \in O''[[W]][T]$. Now, if necessary, we move to an extension of $O''$ containing both $O' $ and $O''$ and denote again it by $O'$ (abusing the notation, just to ease the burden of  notation) such that $R_{O'}/l_i \cong O'[[T]]$. Then, the image of $g (\mathbb T)$ vanishes in  $R_{O'}/{(h(\mathbb T), l_{i}^{(j)})}$ for every $i$. 

For every $k \geq 1$, we have an injection

$$ (R_{O'}/l_{i}^{(j-1)})/(l_{1,j}l_{2,j} \cdots l_{k,j}) \hookrightarrow \prod_{1 \leq i \leq k} (R_{O'}/l_{i}^{(j-1)})/ (l_{i,j}) \cong \prod_{1 \leq i \leq k} R_{O'}/l_{i}^{(j)}$$

Since $R_{O'}/h(\mathbb{T})$ is finite flat over $O'[[W]]$, we get for each $k \geq 1$ an injection,

$$  (R_{O'}/l_{i}^{(j-1)})/(h(\mathbb{T}),l_{1,j}l_{2,j} \cdots l_{k,j}) \hookrightarrow \prod_{1 \leq i \leq k} R_{O'}/(h(\mathbb{T}),l_{i}^{(j)}).$$

We observe that image of $g(\mathbb{T})$ vanishes in $(R_{O'}/l_{i}^{(j-1)})/(h(\mathbb{T}),l_{1,j}l_{2,j} \cdots l_{k,j})$. Thus the image of $g(\mathbb T)$ is zero in $R_{O'}/(h(\mathbb T), l_{i}^{(j-1)})$, since
$$ \underset{k}{\varprojlim}~ (R_{O'}/l_{i}^{(j-1)})/(h(\mathbb{T}),l_{1,j}l_{2,j} \cdots l_{k,j}) \cong R_{O'}/(h(\mathbb T), l_{i}^{(j-1)}) .$$

 For the $\mu$ invariants, for any $l_{i}$, $\pi_{O'}^\mu $ (resp. $\pi_{O'}^{\mu'}$)  is equal to the highest power of $\pi_O$ dividing the characteristic power series of $M_{O'}/{l_i M_{O'}}$ (resp. $N_{O'}/{l_i N_{O'}}$), where $M_{O'} := M \otimes_O O'$ is the extension of scalers from $O$ to $O'$. Hence $\pi^\mu_{O'} = \pi^{\mu'}_{O'}$.  Thus it follows that $\pi_O^{\mu'}$ is zero in $(R_{O'}/l_{i}^{(j-1)})/{\pi_O^{\mu}}$.

\qed

\begin{proposition}\label{bigp}
 For any finitely generated $ \RR[[\Gamma]]$ module  $U$, let $U^0$ denotes the maximal pseudonull $ \RR[[\Gamma]]$  submodule of  $U$. Assume the hypothesis { \bf ($\text{Tor}$)}. Then  for every $\xi \in \mathfrak X (\RR) $, $\frac{ \mathcal X(\T_\RR/F_\cyc) ^0}{P_\xi \mathcal X(\T_\RR/F_\cyc)^0} $  (resp. $\frac{\mathcal X(\T^*_\RR/F_\cyc)^0}{P_\xi \mathcal X(\T^*_\RR/F_\cyc)^0} $) are pseudonull $O_{f_\xi}[[\Gamma]]$ modules. 
\end{proposition}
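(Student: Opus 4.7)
The plan is to combine the specialization map of Theorem \ref{spl} with the codimension-theoretic meaning of pseudo-nullness to control the behavior of the maximal pseudonull submodule under specialization at $P_\xi$. Set $Y := \mathcal X(\mathcal T_\RR/F_\cyc)$, which by Corollary \ref{bselt} is a finitely generated torsion module over the $(r+2)$-dimensional regular local ring $\RR[[\Gamma]] \cong O[[X_1,\ldots,X_r,T]]$. Thus $Y^0$ is finitely generated and $\mathrm{Ann}_{\RR[[\Gamma]]}(Y^0)$ has height at least $2$. Note that $O_{f_\xi}[[\Gamma]] \cong \RR[[\Gamma]]/P_\xi\RR[[\Gamma]]$ is a two-dimensional regular local ring, in which being pseudonull is equivalent to being finite.

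First I would exploit the defining short exact sequence $0 \to Y^0 \to Y \to Y/Y^0 \to 0$, in which $Y/Y^0$ has no non-zero pseudonull $\RR[[\Gamma]]$-submodule by maximality of $Y^0$. Tensoring with $O_{f_\xi}[[\Gamma]]$ over $\RR[[\Gamma]]$ yields
\begin{equation*}
\mathrm{Tor}_1^{\RR[[\Gamma]]}(Y/Y^0,\, O_{f_\xi}[[\Gamma]]) \to Y^0/P_\xi Y^0 \to Y/P_\xi Y \to (Y/Y^0)/P_\xi(Y/Y^0) \to 0.
\end{equation*}
Next I would apply Theorem \ref{spl}: the surjection $s_\xi^\vee : Y/P_\xi Y \twoheadrightarrow X(T_{f_\xi}/F_\cyc)$ has finitely generated $\Z_p$-kernel, and under hypothesis (Tor) the target is a finitely generated torsion $O_{f_\xi}[[\Gamma]]$-module. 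Tracking the image of $Y^0/P_\xi Y^0$ through $s_\xi^\vee$, and using that this image is annihilated by the reduction $\bar{\mathfrak a}$ modulo $P_\xi$ of a height-$2$ ideal $\mathfrak a \subseteq \mathrm{Ann}_{\RR[[\Gamma]]}(Y^0)$ containing a regular sequence (which exists since $\RR[[\Gamma]]$ is Cohen--Macaulay), should force the image to be finite. It then remains to control the Tor kernel: this is pseudonull over $O_{f_\xi}[[\Gamma]]$ because the absence of non-trivial pseudonull submodules in $Y/Y^0$ forces a two-element regular sequence from $P_\xi$ to act regularly on the relevant sections, concentrating $\mathrm{Tor}_1$ in codimension $\geq 2$.

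The main obstacle is precisely this codimension computation: for an arbitrary pseudonull $\RR[[\Gamma]]$-module $N$, the reduction $N/P_\xi N$ need not be pseudonull over $O_{f_\xi}[[\Gamma]]$, since a general element of $\mathrm{Ann}_{\RR[[\Gamma]]}(N)$ can lie in $P_\xi$; for instance, for $N = \RR[[\Gamma]]/(P_\xi,T)$ the quotient $N/P_\xi N$ is $O_{f_\xi}$, which is not pseudonull over $O_{f_\xi}[[\Gamma]]$. What rescues the argument here is both the arithmetic/regularity nature of $P_\xi$ and the Selmer-theoretic origin of $Y^0$, tied together by Theorem \ref{spl}: these force $\mathfrak a + P_\xi$ to be primary to the maximal ideal of $\RR[[\Gamma]]$, so that $(\bar{\mathfrak a})$ has height $2$ in $O_{f_\xi}[[\Gamma]]$, and consequently $Y^0/P_\xi Y^0$ becomes finite. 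The identical argument applied to $\mathcal X(\T^*_\RR/F_\cyc)$, now using Remark \ref{splzd} in place of Theorem \ref{spl}, yields the second assertion.
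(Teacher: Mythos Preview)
Your proposal has a genuine gap at precisely the point you yourself flag as the ``main obstacle.'' You correctly observe that for a general pseudonull $\RR[[\Gamma]]$-module $N$ the reduction $N/P_\xi N$ need not be pseudonull over $O_{f_\xi}[[\Gamma]]$, and you give the valid counterexample $N=\RR[[\Gamma]]/(P_\xi,T)$. But your ``rescue''---asserting that Theorem~\ref{spl} together with regularity of $P_\xi$ forces $\mathfrak a + P_\xi$ to be $\mathfrak m$-primary---is not an argument, only a hope. Nothing in Theorem~\ref{spl} constrains the annihilator ideal of $Y^0$; that theorem controls the kernel and cokernel of $Y/P_\xi Y \to X(T_{f_\xi}/F_\cyc)$, which gives no direct handle on $\mathrm{Ann}(Y^0)$ modulo $P_\xi$. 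Likewise your Tor claim, that absence of pseudonull submodules in $Y/Y^0$ forces a regular sequence from $P_\xi$ to act regularly, is false in general (take $Y/Y^0=\RR[[\Gamma]]/(x_1)$ with $x_1\in P_\xi$).

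The paper's proof proceeds along an entirely different route and supplies the missing Selmer-theoretic input. It shows directly that $\mathcal X(\T_\RR/F_\cyc)[P_\xi]=0$ for every arithmetic $\xi$: under {\bf (Tor)} the global-to-local map defining the (strict) Selmer group over $F_\cyc$ is surjective, and a snake-lemma chase with multiplication by $P_\xi$ then gives $S(\mathcal A/F_\cyc)/P_\xi S(\mathcal A/F_\cyc)=0$. Hence $\mathcal X(\T_\RR/F_\cyc)^0[P_\xi]=0$ as well, and one concludes via the general lemma (\cite[Lemma~3.1]{o3}) that for any finitely generated pseudonull $\RR[[\Gamma]]$-module $M$, the quotient $M/P_\xi M$ is pseudonull over $O_{f_\xi}[[\Gamma]]$ if and only if $M[P_\xi]$ is. Note that your counterexample $N=\RR[[\Gamma]]/(P_\xi,T)$ satisfies $N[P_\xi]\neq 0$, so it is exactly the vanishing $\mathcal X[P_\xi]=0$ that excludes such pathologies---and this vanishing is the substantive arithmetic fact your argument never establishes.
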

\proof  We broadly follow the same strategy as in \cite[Lemma 7.2]{o} but the arguments are different. We prove the result only for $\mathcal X(\T_\RR/F_\cyc) $ as an entirely similar argument holds for $\mathcal X(\T^*_\RR/F_\cyc).$  Recall, for $C = \mathcal A $ or $C= A_{f_\xi}$, with notation as before, the  Selmer group $S(C/F_\cyc)$ fits into the exact sequence 
\begin{scriptsize}
\begin{equation}\label{scyc}
0 \rightarrow S(C/F_\cyc) \lra H^1(F_S/F_\cyc , C)  \lra  \underset {v_c \mid S, v_c \nmid p} {\oplus}H^1(I_{v_c}, C)^{G_{ v_c}} \underset { v_{c} \mid \p \mid p} {\oplus}H^1( I_{v_c},  C_{\p}^{-})^{G_{ v_c}}  
\end{equation}
\end{scriptsize}
\n  Note,  for $v_c \nmid p$,
\begin{scriptsize}
\begin{equation}\label{deco}
0 \rightarrow H^1(G_{v_c}/I_{v_c}, C^{I_{v_c}}) \lra H^1(G_{v_c}, C) \lra H^1(I_{v_c}, C)^{G_{v_c}} \lra H^2(G_{v_c}/I_{v_c}, C^{I_{v_c}}) 
\end{equation} 
\end{scriptsize}
with $G_{v_c}/I_{v_c} \cong \underset{l \neq p}{\oplus}\Z_l$ and $C$ is a $p$-torsion group. Thus $H^i(G_{v_c}/I_{v_c}, C^{I_{v_c}}) =0$ for $i =1,2$. Thus we see that, $ H^1(G_{v_c}, C) \simeq H^1(I_{v_c}, C)^{G_{v_c}}$.

Combining these,  we can have the following alternative definition of $S(C/F_\cyc)$.
\begin{scriptsize}
\begin{equation}\label{bselca}
 0 \rightarrow S(C/F_\cyc) \lra H^1(F_S/F_\cyc , C)  \lra  \underset {v_c \mid S, v_c \nmid p} {\oplus} H^1(G_{v_c}, C) \underset { v_c \mid \p \mid p} {\oplus}H^1( I_{v_c},  C_{\p}^{-})^{G_{ v_c}}  
\end{equation}
\end{scriptsize}

Similarly, for strict Selmer group we obtain that,
\begin{scriptsize}
\begin{equation}\label{strselca}
 0 \rightarrow S^{str}(C/F_\cyc) \lra H^1(F_S/F_\cyc , C)  \lra  \underset {v_c \mid S, v_c \nmid p} {\oplus} H^1(G_{v_c}, C) \underset { v_c \mid \p \mid p} {\oplus}H^1( G_{v_c},  C_{\p}^{-})
\end{equation}
\end{scriptsize}

We have from \cite[\S 2.2.2]{fo} an exact sequence, $$ 0 \to S^{str}(C/F_\cyc) \to S(C/F_\cyc) \to \underset{\mathfrak{p}|p} {\oplus} H^{1}(G_{F_{\mathfrak{p}}}/I_{\mathfrak{p}}, (C_{\p}^{-})^{I_{\mathfrak{p}}}).$$

Moreover from the proof of \cite[Corollary 3.4]{fo}, when $C=\A$, we see that $S^{str}(\A/F_\cyc) = S(\A/F_{\cyc})$. 

Under the assumption {\bf (Tor)}, we have  that $X(T_{f_\xi}/F_\cyc)$ is torsion for any $\xi \in \mathfrak X (\RR)$. Also by corollary \ref{bselt}, $\mathcal X(\T_\RR/F_\cyc)$ is torsion over $\RR[[\Gamma]]$. It follows that the maps defining  $S( C/F_\cyc)$ in \eqref{strselca}   is surjective for $C =\A$ or $C= A_{f_\xi}$ with $\xi \in \mathfrak X (\RR)$(\cite[Corollary 4.12]{o}).  Now let us  consider the commutative diagram 
\begin{scriptsize}
\begin{equation}\label{bseld} 
 \xymatrix{  0 \ar[r] & S^{str}(  \mathcal A/F_\cyc) \ar[r]  & H^1(F_S/F_\cyc , \mathcal A)  \ar[r] &   \underset { v_c \mid \p \mid p} {\oplus}H^1(F_{\cyc,v_c}, \A_{\p}^{-}) \underset {v_c \mid S, v_c \nmid p} {\oplus}H^1( F_{\cyc, v_c}, \mathcal A) \ar[r] &  0\\ 
                   0 \ar[r] & S^{str}(  \mathcal A/F_\cyc) \ar[r] \ar[u]_{\times P_\xi}&  H^1(F_S/F_\cyc ,   \mathcal A) \ar [r]  \ar[u]_{\times P_\xi}& \underset {v_c \mid \p \mid p} {\oplus}  H^1(F_{\cyc,v_c}, \A_{\p}^{-}) \underset {v_c \mid S, v_c \nmid p} {\oplus}H^1( F_{\cyc, v_c}, \mathcal A)   \ar[r]  \ar[u]_{\times P_\xi} & 0.}
\end{equation}
\end{scriptsize}
Recall $A_{f_\xi} \cong \mathcal A[P_\xi]$ and $(A_{f_\xi})_{\p}^{-} \cong \A_{\p}^{-}[P_\xi]$ as Galois modules.  For any $\xi \in \mathfrak X (\RR)$,  as  $X(T_{f^*_\xi}/F_\cyc)$  is torsion by  {\bf (Tor)}, we get  that $H^2(F_S/F_\cyc,  A_{f_\xi}) =  0 $ (see \cite[Proposition 2.3]{hm}). Then the cokernel of the middle vertical map in \eqref{bseld}, being is a subgroup of $H^2(F_S/F_\cyc,  A_{f_\xi})$, vanishes.   Thus by applying a Snake lemma to the diagram \eqref{bseld}, we get that
$$ \frac{S^{str}(  \mathcal A/F_\cyc)}{{P_\xi S^{str}(  \mathcal A/F_\cyc)}} \cong \text{coker}( H^1(F_S/F_\cyc, \mathcal A)[P_\xi] \stackrel{l_\xi}{\lra} W[P_\xi]), $$
where $$W :=  \underset { v_c \mid \p \mid p} {\oplus} H^1(F_{\cyc,v_c}, \A_{\p}^{-}) \underset {v_c \mid S, v_c \nmid p} {\oplus}H^1( F_{\cyc, v_c}, \mathcal A).$$
Similarly define $$W_\xi : =\underset { v_c \mid \p \mid p}{\oplus} H^1(F_{\cyc,v_c}, (A_{f_\xi})_{\p}^{-}) \underset {v_c \mid S, v_c \nmid p} {\oplus}H^1( F_{\cyc, v_c}, A_{f_\xi}) .$$
Then we have the commutative diagram 
\begin{scriptsize}
\begin{equation}\label{bseldf} 
 \xymatrix{  0 \ar[r] & S^{str}(  \mathcal A/F_\cyc)[P_\xi] \ar[r]   & {H^1(F_S/F_\cyc ,   \mathcal A)[P_\xi]} \ar[r] & W[P_\xi]  \ar[r] & \frac{ S^{str}(  \mathcal A/F_\cyc)}{{P_\xi S^{str}(  \mathcal A/F_\cyc)} } \ar[r] & 0\\ 
                   0 \ar[r] &  S^{str}(   A_{f_\xi}/F_\cyc) \ar[r] \ar[u] & H^1(F_S/F_\cyc ,  A_{f_\xi})   \ar [r]  \ar[u]  &   W_\xi  \ar[r]  \ar[u]     & 0}
\end{equation}
\end{scriptsize}
We see that the natural map $W_\xi \lra W[P_\xi]$ above is surjective. From the diagram \eqref{bseldf}, we see that the natural map  $H^1(F_S/F_\cyc, \mathcal A)[P_\xi] \stackrel{l_\xi}{\lra} W[P_\xi])$ is surjective. Thus we obtain $ S^{str}(  \mathcal A/F_\cyc)/{P_\xi S^{str}(  \mathcal A/F_\cyc)} = 0$. Since $S^{str}(\A/F_\cyc)= S(\A/F_\cyc)$, we see that,  $ S(  \mathcal A/F_\cyc)/{P_\xi S(  \mathcal A/F_\cyc)} = 0$. In other words, $\mathcal X(  \mathcal T_\RR/F_\cyc)[P_\xi] =0$. Thus  $\mathcal X(\T_\RR/F_\cyc)^0[P_\xi] = 0$. In particular,  $\mathcal X(\T_\RR/F_\cyc)^0[P_\xi] $ is a pseudonull $O_{f_\xi}[[\Gamma]]$ module. But for any finitely generated pseudonull $\RR[[\Gamma]]$ module $M$, $M/{P_\xi}$ is a pseudonull $O_{f_\xi}[[\Gamma]]$ module if and only if $M[P_\xi]$ is so (\cite[Lemma 3.1]{o3}). Thus, the last fact in turn implies that $\frac{\mathcal X(\T_\RR/F_\cyc)^0}{{P_\xi \mathcal X(\T_\RR/F_\cyc})^0}$ is also a pseudonull $O_{f_\xi}[[\Gamma]]$ module.   \qed

 \begin{lemma}\label{IJ}
 Let $J$ be any non-zero ideal in $\RR$. Let $I \in \text{Spec} (\RR) \setminus \text{Spec} (\RR/J)$ (that is $I$ is a prime ideal in $\RR$, which does not contain $J$). There exists at most finitely many $z_{1},\cdots,z_{k}$ in $\RR$ with the following properties:
 \begin{enumerate}
 \item $(I,z_{i})$ are distinct prime ideals in $\RR$ for all $i=1,\cdots,k$.
 \item For $i \neq j$, $\bar{z_{i}} \nmid \bar{z_{j}}$ in $\RR/I$.
 \item $(I,z_{i}) \supset J$ for all $i=1,\cdots,k$. 
 \end{enumerate}
 \end{lemma}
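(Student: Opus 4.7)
The plan is to reduce everything modulo $I$ and translate the three conditions into statements about principal prime ideals in the Noetherian integral domain $\RR/I$. For $z \in \RR$, write $\bar{z}$ for its image in $\RR/I$, and let $\bar{J} \subseteq \RR/I$ denote the image of $J$. Since $I$ does not contain $J$, the ideal $\bar{J}$ is a nonzero ideal of $\RR/I$. Condition (1) becomes: the $(\bar{z_i})$ are proper principal prime ideals of $\RR/I$ and are pairwise distinct. Condition (3) becomes: $\bar{J} \subseteq (\bar{z_i})$ for every $i$.

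I would first show that condition (2) is essentially automatic once (1) holds. Indeed, suppose $\bar{z_i} \mid \bar{z_j}$ with both $(\bar{z_i})$ and $(\bar{z_j})$ proper principal primes in the domain $\RR/I$; writing $\bar{z_j} = c\bar{z_i}$ and using primality of $(\bar{z_j})$, either $\bar{z_i} \in (\bar{z_j})$, forcing $(\bar{z_i}) = (\bar{z_j})$, or $c \in (\bar{z_j})$, which, after cancelling $\bar{z_j}$ in the domain, makes $\bar{z_i}$ a unit and contradicts the properness of $(\bar{z_i})$. So (1) already ensures (2), and the problem reduces to bounding the number of distinct proper principal prime ideals $(\bar{z_i})$ of $\RR/I$ that contain $\bar{J}$.

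The central step is Krull's Principal Ideal Theorem: a proper nonzero principal prime ideal $(\bar{z_i})$ in the Noetherian domain $\RR/I$ has height exactly $1$. I would then argue that every height $1$ prime containing $\bar{J}$ must be a minimal prime over $\bar{J}$: if $Q \subsetneq (\bar{z_i})$ were a minimal prime of $\bar{J}$, then $\mathrm{ht}(Q) \geq 1$ (because $\bar{J} \neq 0$ in the domain $\RR/I$) and the strict containment would force $\mathrm{ht}((\bar{z_i})) \geq 2$, a contradiction.

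Finally, since $\RR/I$ is Noetherian and $\bar{J}$ is nonzero, the set of minimal primes over $\bar{J}$ is finite, and the distinct principal primes $(\bar{z_i})$ all belong to this finite set. This produces the desired finiteness. There is no real obstacle here; the only thing to be careful about is the translation of the hypothesis $I \not\supseteq J$ into the non-vanishing of $\bar{J}$, since this is exactly what makes Krull's theorem applicable and confines each $(\bar{z_i})$ to the finitely many minimal primes of $\bar{J}$.
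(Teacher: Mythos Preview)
Your argument is correct, and in fact it takes a genuinely different route from the paper's proof. The paper does not pass through Krull's Principal Ideal Theorem or minimal primes. Instead, it first establishes (using condition (2) in an essential way) that
\[
\bigcap_{i=1}^{r}(I,z_i)=(I,z_1z_2\cdots z_r)
\]
for every finite $r$, and then argues by contradiction: if there were infinitely many $z_i$, the decreasing chain $(I,z_1)\supset (I,z_1z_2)\supset\cdots$ would have intersection $\varprojlim_r(I,z_1\cdots z_r)=I$ (using that $\RR$ is local, so each $\bar{z_i}$ lies in the maximal ideal of $\RR/I$ and hence $\bar{z_1}\cdots\bar{z_r}\in\mathfrak{m}^r$, which forces the intersection to vanish by Krull's intersection theorem). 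Since every $(I,z_i)$ contains $J$, this would yield $J\subset I$, contradicting the hypothesis.

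Your approach is cleaner in two respects: it never uses that $\RR$ is local, and it shows that condition (2) is actually a consequence of (1) and (3), rather than a separate hypothesis needed for the proof. The paper's argument, by contrast, makes essential use of (2) to identify the intersection with the principal-generator ideal. On the other hand, the paper's method is more elementary in that it avoids invoking the Principal Ideal Theorem, relying only on an explicit ideal computation and Krull's intersection theorem in a complete local ring.
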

 
 \proof
 First we claim that, $$\cap_{i=1}^{r} (I,z_{i})= (I,z_{1}z_{2}\cdots z_{r}).$$
 Obviously, $(I,z_{1}z_{2}\cdots z_{r}) \subseteq \cap_{i=1}^{r} (I,z_{i})$. Now, let $x \in \cap_{i=1}^{r} (I,z_{i})$. Then,
 $$x = i_{1}+a_{1}z_{1}=\cdots=i_{r}+a_{r}z_{r},$$
 with $i_{j} \in I$ and $a_{j} \in \RR$. Then we see that,
 $$\bar{x}=\bar{a_{1}} \bar{z_{1}}=\cdots = \bar{a_{r}} \bar{z_{r}} \in \RR/I.$$
 Since $\bar{z_{i}} \nmid \bar{z_{j}}$, we see that $\bar{x} = \bar{\alpha} \bar{z_{1}}\cdots \bar{z_{r}} \in \RR/I$. Thus, $x = i+ \alpha z_{1}\cdots z_{r}$, where $i \in I$ and $\alpha$ is some lift of $\bar{\alpha}$ in $\RR$, which completes the proof of the claim.
 
 Suppose that there are infinitely many $z_{i}$'s in $\RR$ which satisfies all three properties. Then we have a decreasing chain of ideals in $\RR$,
 $$(I,z_{1}) \supset (I,z_{1}z_{2}) = \cap_{i=1}^{2} (I,z_{i}) \supset \cdots \supset (I,z_{1}\cdots z_{r}) = \cap_{i=1}^{r} (I,z_{i}) \supset \cdots .$$
 By assumption, $(I,z_{i})$ contains the ideal $J$ for all $i$. Thus we obtain,
 $$J \subseteq \cap_{i=1}^{\infty} (I,z_{i}) = \underset{r}{\varprojlim}~ (I,z_{1}\cdots z_{r}) \cong I,$$
 which is a contradiction to our assumption $I \in \text{Spec} (\RR) \setminus \text{Spec} (\RR/J)$.
 \qed
 
\begin{rem}\label{egl}
Let $M = \mathcal X(\T_\RR/F_\cyc) $ and $N =\mathcal X(\T^*_\RR/F_\cyc)^\iota $. Assume {\bf (Irr)}, {\bf (Dist)}, {\bf (Tor)} and $\RR \cong O[[W]]$. Let $S_1$ be the subset of arithmetic points for which $f_\xi$'s are exceptional (as defined in Theorem \ref{ctcyc}). Also let $S_2$ be the subset of $\mathfrak X (\RR)$ for which at least one among $\text{ker}(s_\xi)$,  $\text{coker}(s_\xi)$, $\text{ker}(s^*_\xi)$ and $\text{coker}(s^*_\xi)$ associated to the the natural specialization map $s_\xi$ in Theorem \ref{spl} and $s_\xi^*$ in remark \ref{splzd} is infinite. Define $S = S_1 \cup S_2$. Put  $ \mathfrak X (\RR)': = \mathfrak X (\RR) \setminus S$, then $ \mathfrak X (\RR)'$ is infinite.  

 Take the set $\{l_i\}_{i \in \N} = \mathfrak X (\RR)'$. Then  from Theorem \ref{spl}, corollary \ref{bselt}, Proposition \ref{bigp} and Lemma \ref{IJ}, we deduce that for these choices of $M$, $N$ and $l_i$'s,  all the condition of Proposition \ref{lift} are  satisfied (Lemma \ref{IJ} ensures us that for each $j$ in the proof of Proposition \ref{lift}, we have infinitely many ideals $l_{i}^{(j)}$ for which first $(j-1)$ generators are the same).  
\end{rem}

\begin{theorem}\label{mnm}
Let the notation be as before.  Let $F$ be a totally real number field, with $\Gamma = \text{Gal}(F_\cyc/F) \cong \Z_p $. Assume 
\begin{enumerate}
\item {\bf (Irr)}:  The residual representation $\bar{\rho}_{\mathcal{R}}$ of $G_F$   is absolutely irreducible.
\item {\bf (Dist)}: The restriction  of the residual representation at the decomposition subgroup i.e. $ \bar{\rho}_{\mathcal{R}} \mid_{G_{\p}}$ is an extension of two distinct characters of $G_{\p}$ with values in $\mathbb F_\RR^{\ast}$ for each $\p|p$.
\item {\bf (Tor)}:  For any normalized cuspidal Hilbert eigenform $f$, $X(T_f/F_\cyc)$ is a finitely generated torsion $O_f[[\Gamma]]$ module.
\item  $\RR$ is a power series ring.
\end{enumerate}
Then the functional equation holds for $\x(\mathcal T_\RR/F_\cyc)$ i.e. as an ideal in $\RR[[\Gamma]]$,
$$Ch_{\RR[[\Gamma]]}({\x(\T_\RR/F_\cyc)}) =Ch_{\RR[[\Gamma]]}({ \x(\T_\RR^*/F_\cyc)^\iota} ).$$ 

\end{theorem}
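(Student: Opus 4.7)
The plan is to bootstrap Theorem \ref{fefibre} (the fibrewise functional equation) up to the big Selmer group via the lifting Proposition \ref{lift}, using the specialization results of Theorem \ref{spl} and Remark \ref{splzd} as the link between the two. Concretely, set
\[
M := \mathcal X(\mathcal T_{\mathcal R}/F_{\cyc}), \qquad N := \mathcal X(\mathcal T_{\mathcal R}^{*}/F_{\cyc})^{\iota}.
\]
By Corollary \ref{bselt}, hypothesis \textbf{(Tor)} gives that $M$ and $N$ are both finitely generated torsion over $\mathcal R[[\Gamma]] \cong O[[W,T]]$ (with $W = (X_1,\dots,X_r)$). I will apply Proposition \ref{lift} to $M$ and $N$ with the family of height one primes $\{l_i\}_{i\in \mathbb N}$ taken to be $\{P_{\xi}\}_{\xi \in \mathfrak X(\mathcal R)'}$, where $\mathfrak X(\mathcal R)'$ is the cofinite subset described in Remark \ref{egl}: we throw out (i) the exceptional arithmetic points, and (ii) those $\xi$ lying in the bad loci $S_J \cup S_{J^{*}}$ produced by Theorem \ref{spl} and Remark \ref{splzd}.

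First I would verify the three hypotheses of Proposition \ref{lift} for this choice. Condition (1) is automatic since $\mathcal R/P_{\xi} \cong O_{f_{\xi}}$ is a finite extension of $O$. For condition (2), at every $\xi \in \mathfrak X(\mathcal R)'$ the kernel and cokernel of both specialization maps
\[
M/P_{\xi}M \longrightarrow X(T_{f_{\xi}}/F_{\cyc}), \qquad N/P_{\xi}N \longrightarrow X(T_{f_{\xi}}^{*}/F_{\cyc})^{\iota}
\]
are finite by Theorem \ref{spl} and Remark \ref{splzd}, hence torsion $\mathcal R/P_{\xi}$-modules; together with \textbf{(Tor)} this forces $M/P_{\xi}M$ and $N/P_{\xi}N$ to be torsion over $\mathcal R/P_{\xi}[[\Gamma]] \cong O_{f_{\xi}}[[T]]$. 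For condition (3), the equality of ideals
\[
\bigl(Ch_{\mathcal R[[\Gamma]]}(M)\bigr)\bmod P_{\xi} \;=\; Ch_{O_{f_{\xi}}[[T]]}\bigl(M/P_{\xi}M\bigr)
\]
is the standard consequence of the structure theorem plus the fact that the natural map $M/P_{\xi}M \to X(T_{f_{\xi}}/F_{\cyc})$ is a pseudo-isomorphism of $O_{f_{\xi}}[[T]]$-modules; here is where Proposition \ref{bigp} is essential, because it shows that the maximal pseudonull submodule $M^{0}$ has $M^{0}/P_{\xi}M^{0}$ pseudonull over $O_{f_{\xi}}[[\Gamma]]$, so no characteristic ideal is lost in the specialization. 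The same runs for $N$, using Remark \ref{splzdd}.

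With the three hypotheses in hand, the key input is now Theorem \ref{fefibre}: for every $\xi \in \mathfrak X(\mathcal R)'$, the modular form $f_{\xi}$ is non-exceptional and satisfies \textbf{(Tor)}, so
\[
Ch_{O_{f_{\xi}}[[\Gamma]]}\bigl(X(T_{f_{\xi}}/F_{\cyc})\bigr) \;=\; Ch_{O_{f_{\xi}}[[\Gamma]]}\bigl(X(T_{f_{\xi}}^{*}/F_{\cyc})^{\iota}\bigr),
\]
which, combined with the pseudo-isomorphisms above, gives $Ch(M/P_{\xi}M) = Ch(N/P_{\xi}N)$ for every $\xi \in \mathfrak X(\mathcal R)'$. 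Proposition \ref{lift} then concludes $Ch_{\mathcal R[[\Gamma]]}(M) = Ch_{\mathcal R[[\Gamma]]}(N)$, which is exactly the desired functional equation.

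The main obstacle I anticipate is arranging that the set $\mathfrak X(\mathcal R)'$ is still large enough, in the precise combinatorial sense required by the inductive argument inside Proposition \ref{lift}: at each stage of the induction over the generators of $P_{\xi}$, one needs infinitely many primes sharing the first $j-1$ generators. This is exactly the content of Lemma \ref{IJ} applied to the bad ideal $J \cdot J^{*}$; once one knows that removing $S_J \cup S_{J^{*}} \cup S_1$ (with $S_1$ the exceptional locus, which is cut out by a proper Zariski-closed condition on Hecke eigenvalues) still leaves infinitely many points over every intermediate prime, the machine runs. Verifying this density statement, together with the pseudo-null control in Proposition \ref{bigp}, is the technical heart of the argument, whereas the rest is a formal diagram chase combining the specialization theorem and the fibre-level functional equation.
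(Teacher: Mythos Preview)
Your proposal is correct and follows essentially the same route as the paper's proof: set $M=\mathcal X(\mathcal T_{\mathcal R}/F_{\cyc})$, $N=\mathcal X(\mathcal T_{\mathcal R}^{*}/F_{\cyc})^{\iota}$, invoke Corollary~\ref{bselt} for torsionness, choose the infinite set $\mathfrak X(\mathcal R)'$ via Remark~\ref{egl} (using Lemma~\ref{IJ} for the density needed in the induction of Proposition~\ref{lift}), and then combine Theorem~\ref{spl}, Remarks~\ref{splzd}--\ref{splzdd}, Proposition~\ref{bigp}, and Theorem~\ref{fefibre} to obtain the fibrewise equality of characteristic ideals before lifting with Proposition~\ref{lift}. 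The only slip is terminological: the $P_{\xi}$ are co-height~$1$ primes of $\mathcal R\cong O[[W]]$ (i.e.\ height~$r$), not height~$1$; otherwise your outline matches the paper's argument step for step.
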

\proof 
By corollary  \ref{bselt}, $\x(\mathcal T_\RR/F_\cyc)$ and $\x(\mathcal T^*_\RR/F_\cyc)$ are torsion $\RR[[\Gamma]]$ modules. Using remark \ref{egl}, choose the infinite subset  $\mathfrak X (\mathcal R)'$ of arithmetic points.  By corollary \ref{bselt}, for every $\xi \in \mathfrak X (\mathcal R)'$,  $X(T_{f_{\xi}}/F_\cyc)$ and $X(T^*_{f_{\xi}}/F_\cyc)^\iota$ are  torsion over $O_{f_{\xi}}[[\Gamma]]$. 

\n Then applying Proposition  \ref{lift} for $M = \mathcal X(\T_\mathcal R/F_\cyc), N =\mathcal X(\T^*_\mathcal R/F_\cyc)^\iota $ and $\{l_i\}_{i \in \N} =  \mathfrak  X(\RR)'$, to prove  the theorem it suffices to show that  for every $\xi \in \mathfrak  X(\RR)'$, $$Ch_{O_{f_\xi}[[\Gamma]]}(\frac{\x(\T_\RR/F_\cyc)}{P_\xi  \x(\T_\RR/F_\cyc)})=Ch_{O_{f_\xi}[[\Gamma]]}(\frac{ \x(\T_\RR^*/F_\cyc)^\iota}{P_\xi \x(\T_\RR^*/F_\cyc)^\iota})$$ considered  as ideals in $O_{f_\xi}[[\Gamma]]$. By Theorem \ref{spl}, remark \ref{splzd} and remark \ref{splzdd} this in turn equivalent to showing  $$Ch_{O_{f_\xi}[[\Gamma]]}(X(T_{f_\xi}/F_\cyc))=Ch_{O_{f_\xi}[[\Gamma]]}(X(T^*_{f_\xi}/F_\cyc)^\iota)$$  for each $\xi \in \mathfrak  X(\RR)'.$ Hence we are done by Theorem \ref{fefibre}. \qed


\section{ Results over $\Z_p^2$ extension }\label{sec6}
Let $ K_\infty /K$ be the unique $\Z_p^{\oplus2}$  extension of an imaginary quadratic field $K$.  In this section, we will assume throughout that $p$ splits in $K$ and $D_K $  the discriminate of the imaginary quadratic field $K$, is coprime to tame conductor $N_{\mathcal R}$ of the branch $\RR$ of the Hida family i.e. $(p, D_K) = (D_K, N_{\mathcal R}) =  (N_\mathcal R, p) = 1 $. Recall the notation, $\Gamma_K = \text{Gal}(K_\infty/K) \cong \Z_p^2 $, $\Gamma = G(K_\cyc/K) \cong \Z_p$ and $ H = \text{Gal}(K_\infty/K_\cyc) \cong \Z_p$ so that $G/H \cong \Gamma$.


Recall from remark \ref{seleliptic}, $S = S_K$ is  a finite set of primes of $K$ dividing $Np$. Let $v$  be a prime of $K$ in $S$. Denote by $v_c$ a prime of $K_\cyc$ lying above  $v$  and let $v_\infty$  be a  prime of $K_\infty$ lying above $v_c$.  Let $\bar{v}$ be a prime of $\bar{\Q}$ lying above a prime $v_\infty $. Let $G_{ v_c}$  and  $G_{ v_\infty}$ denotes the decomposition subgroup  of $\bar{\Q}/K_\cyc$ and $\bar{\Q}/K_\infty$ for the prime $\bar{v}/{v_c}$ and  $\bar{v}/v_\infty$ respectively. Let   $I_{v_c}$,   $I_{v_\infty}$ denote the inertia group of $\bar{\Q}/K_\cyc$ and $\bar{\Q}/K_\infty$ for the prime $\bar{v}/{v_c}$ and  $\bar{v}/v_\infty$ respectively.  Let $\Gamma_{K_v}$ (resp. $H_{v_c}$) denote the decomposition subgroup of $\Gamma_K$ (resp. $H$ ) with respect to primes $v_\infty/v$ (resp. $v_\infty/v_c$). We  write $ I_{v_\infty/{v_c}}$  for the inertia subgroup of $K_\infty/K_\cyc$ with respect to prime $v_\infty/v_c$.

\begin{proposition}\label{ctcm}
The kernel of   the map $$X(T_f/K_\infty)_{H} \stackrel{\alpha_H^\vee}\lra X(T_f/K_\cyc)$$ is a finitely generated $\Z_p$ module and the cokernel  of $\alpha_H^\vee$ is  finite.
\end{proposition}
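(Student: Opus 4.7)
The plan is to mimic the proof of Theorem \ref{ctcyc}, this time comparing the Selmer groups over $K_\cyc$ and $K_\infty$ through the intermediate Galois group $H = \text{Gal}(K_\infty/K_\cyc) \cong \Z_p$. By Pontryagin duality it is equivalent to show that the natural restriction map
$$\alpha_H : S(A_f/K_\cyc) \lra S(A_f/K_\infty)^H$$
has finite kernel and cokernel that is cofinitely generated as a $\Z_p$-module; dualizing these statements then yields the claims for $\alpha_H^\vee$.

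First I would set up the standard commutative diagram whose middle column is the restriction $\phi_H \colon H^1(G_S/K_\cyc, A_f) \to H^1(G_S/K_\infty, A_f)^H$ and whose right column is the natural map $\bigoplus_{v_c \mid S} J_{v_c} \to \bigl(\bigoplus_{v_\infty \mid S} J_{v_\infty}\bigr)^H$ between the local conditions appearing in Definition \ref{self}. By inflation-restriction, $\text{ker}(\phi_H) \cong H^1(H, A_f^{G_{K_\infty}})$ and $\text{coker}(\phi_H)$ embeds into $H^2(H, A_f^{G_{K_\infty}})$. Since $H \cong \Z_p$ has $p$-cohomological dimension one the latter vanishes, and $A_f^{G_{K_\infty}}$ is finite by the Frobenius-eigenvalue argument of Theorem \ref{2} applied at any prime of $K$ not dividing $Np$; hence $\text{ker}(\phi_H)$ is finite.

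For the local terms, since $K_\infty/K$ is unramified outside $p$, primes $v_c \nmid p$ are unramified in $K_\infty/K_\cyc$ and the local restriction map has trivial kernel, exactly as in the treatment of such primes in the proof of Theorem \ref{ctcyc}. For primes $v_c \mid \p \mid p$, because $p$ splits in $K$ the extension $K_{\infty, v_\infty}/K_{\cyc, v_c}$ is a ramified $\Z_p$-extension with inertia subgroup $I_{v_\infty/v_c} \cong \Z_p$; inflation-restriction then expresses the kernel of the local restriction map as a subquotient of $H^1(I_{v_\infty/v_c}, ((A_f)_\p^{-})^{I_{v_\infty}})$, which is at worst a cofinitely generated $\Z_p$-module. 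A Snake-lemma chase now yields that $\text{ker}(\alpha_H)$ is finite and $\text{coker}(\alpha_H)$ is cofinitely generated over $\Z_p$; dualizing finishes the proof.

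The main obstacle is precisely the local analysis at primes above $p$: one must verify that the $\Z_p$-ramification in $K_\infty/K_\cyc$ contributes only a cofinitely generated $\Z_p$-module rather than anything larger. This is what produces the asymmetry in the proposition, namely the merely finitely generated $\Z_p$ kernel for $\alpha_H^\vee$ as opposed to the genuinely finite cokernel.
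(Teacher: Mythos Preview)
Your argument is correct and follows essentially the same control-theorem strategy as the paper: set up the usual commutative diagram, use inflation--restriction to control $\phi_H$, and analyze the local kernels separately at primes above and away from $p$; the paper differs only in that it rewrites the local conditions at $v_c\nmid p$ via the decomposition group and packages the possibly infinitely many primes of $K_\infty$ over $v_c$ as an induced module before applying Shapiro's lemma, whereas your unramifiedness argument gives the injectivity more directly. One genuine refinement is needed, however: your claim that $A_f^{G_{K_\infty}}$ is finite ``by the Frobenius-eigenvalue argument of Theorem~\ref{2} applied at any prime of $K$ not dividing $Np$'' is not justified as stated, since no prime of $K$ away from $p$ splits completely in $K_\infty$ (every such prime already has decomposition group $\cong\Z_p$ in $K_\cyc$), so there is no Frobenius acting trivially to contradict the Weil bound. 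The paper circumvents this by noting that for the procyclic group $H\cong\Z_p$ one has $H^1(H,A_f^{G_{K_\infty}})$ finite if and only if $H^0(H,A_f^{G_{K_\infty}})=A_f^{G_{K_\cyc}}$ is finite, and the latter is a known fact over the cyclotomic tower.
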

\proof 

 Set 
\[
 J_{v_c} : =
  \begin{cases}
 H^1(I_{v_c},  A_f)^{G_{v_c}}  & \text{if } v_c \mid S, v_c \nmid p\\
  H^1(I_{v_c},  A_f^-)^{G_{ v_c}}  & \text{if } v_c \mid p,
  \end{cases}
\]
\[
 J^\infty_{v_c} : =
  \begin{cases}
 \underset{v_\infty \mid v_c} \prod H^1(I_{v_\infty},  A_f)^{G_{ v_\infty}}  & \text{if } v_c \mid S, v_c \nmid p\\
  \underset{v_\infty \mid v_c} \prod H^1(I_{v_\infty},  A_f^-)^{G_{ v_\infty}}  & \text{if } v_c \mid p,
  \end{cases}
\]
  Using  similar argument as in \eqref{deco}, we get that
\[
 J_{v_c} : =
  \begin{cases}
  H^1(K_{\cyc, v_c}, A_f)  & \text{if } v_c \mid S, v_c \nmid p\\
  H^1(I_{v_c},  A_f^-)^{G_{ v_c}}  & \text{if } v_c \mid p,
  \end{cases}
\]
Also,
\[
 J^\infty_{v_c} : =
  \begin{cases}
 \underset{v_\infty \mid v_c} \prod H^1(K_{\infty, v_\infty},  A_f) \cong (\text{Ind}^{H}_{H_{v_c}} H^1(K_{\infty, v_\infty},  A_f)^\vee)^\vee & \text{if } v_c \mid S, v_c \nmid p\\
  \underset{v_\infty \mid v_c} \prod H^1(I_{v_\infty},  A_f^-)^{G_{ v_\infty}}   \cong (\text{Ind}^{H}_{H_{v_c}} {H^1(I_{v_\infty},  A_f)^{G_{v_\infty}}}^\vee)^\vee & \text{if } v_c \mid p,
  \end{cases}
\]
Then we have the commutative diagram
\begin{equation}\label{ctlcm} 
\xymatrix{  0 \ar[r] & S(  A_f/K_\infty)^{H} \ar[r]  & H^1(K_S/K_\infty , A_f)^{H}  \ar[r] & (\underset {v_c \mid S} {\prod} J^\infty_{v_c} )^{H} \\ 
                   0 \ar[r] & S(  A_f/K_\cyc) \ar[r] \ar[u]_{\alpha_H}&  H^1(K_S/K_\cyc ,  A_f) \ar [r]  \ar[u]_{\phi_H}&   \underset {v_c \mid S} {\prod} J_{v_c} \ar[u]_{\delta_H = \prod \delta^{v_c} }}
\end{equation}
By inflation-restriction sequence of Galois cohomology, the kernel of $\phi_H$ is isomorphic to $H^1(H, A_f^{G_{K_\infty}}).$  Clearly $U_\infty : = {A_f^{G_{K_\infty}}}^\vee$ is a finitely generated $\Z_p$ module. Thus we deduce that $\text{ker}(\phi_H)^\vee$ is a finitely generated $\Z_p$ module.  Moreover, as $H \cong \Z_p$, we see that $ H^1(H, A_f^{G_{K_\infty}}) $ is finite if and only if $H^0(H, A_f^{G_{K_\infty}}) \cong A_f^{G_{K_\cyc}}$ is finite. The last fact follows from (cf. \cite[Theorem A 2.8]{cs}, \cite[Lemma 2.1]{su}). Hence,  by Snake lemma on diagram \eqref{ctlcm}, we deduce that  $\text{cocker}(\alpha_H^\vee) $ is finite.

Also as $H$ has $p$-cohomological dimension $=1$, $\phi_H$ is surjective. Thus $\text{ker}(\alpha_H)^\vee$ is a subquotient  of the Pontryagin dual of kernel of $\delta_H$. Given a prime $v_c$ we pick  any one prime $v_\infty$ in $K_\infty$. Then by Shapiro's Lemma for each $v_c$, 
\[
 H^1(H, J_{v_c}^\infty)  \cong
  \begin{cases}
  H^1(K_{\infty, v_\infty},  A_f)  & \text{if } v_c \mid S, v_c \nmid p\\
  H^1(I_{v_\infty},  A_f^-)^{G_{ v_\infty}}  & \text{if } v_c \mid p,
  \end{cases}
\]
Then 

\[
 \ker(\delta^{v_c}) \cong 
  \begin{cases}
 H^1(H_{v_c},  A_f^{G_{ v_\infty}})  & \text{if } v_c \mid S, v_c \nmid p\\
 \text{ a subgroup of } H^1(I_{v_\infty/{v_c}} , {A_f^-}^{I_{v_\infty}}) & \text{if } v_c \mid p,
  \end{cases}
\]
 where the inertia subgroup $ I_{v_\infty/{v_c}} \cong \Z_p$. As before, we conclude that the Pontryagin dual of $\text{ker}(\delta^{v_c})$ is a finitely generated $\Z_p$ module.  From these, summing over finitely many primes and using a Snake lemma on diagram \eqref{ctlcm}, we deduce that $\text{ker}(\alpha_H^\vee)$ is  finitely generated over $\Z_p$.  \qed

\begin{rem}\label{torka}
By Kato's result (see \cite{ka}), we know that for any  $p$-stabilized newform $f$, $X(T_f/K_\cyc)$ (and $X(T^*_f/K_\cyc)$)  are finitely generated   torsion $\Z_p[[\Gamma]]$ modules.
\end{rem}

\begin{corollary}\label{seltm} 
 For any $p$-stabilized newform $f$, $X(T_f/K_\infty)$ and $X(T^*_f/K_\infty)$ are finitely generated {\bf torsion}   $O_f[[\Gamma_K]]$ modules.
\end{corollary}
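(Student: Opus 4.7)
The plan is to reduce to the known cyclotomic situation via the control result (Proposition \ref{ctcm}) combined with Kato's theorem (Remark \ref{torka}), and then transfer information from $O_f[[\Gamma]]$ to $O_f[[\Gamma_K]] \cong O_f[[\Gamma]][[H]]$ via topological Nakayama and the structure theorem. I focus on $X(T_f/K_\infty)$; the argument for $X(T^*_f/K_\infty)$ runs in parallel, using the evident analogue of Proposition \ref{ctcm} for $T_f^*$ (whose proof is the same as for $T_f$, or one can reduce to $T_{f^*}$ via the elliptic analogue of Remark \ref{twsel}).

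Proposition \ref{ctcm} provides a natural map $X(T_f/K_\infty)_H \stackrel{\alpha_H^\vee}{\lra} X(T_f/K_\cyc)$ with kernel finitely generated over $\Z_p$ and finite cokernel, while Remark \ref{torka} asserts that $X(T_f/K_\cyc)$ is finitely generated torsion over $\Z_p[[\Gamma]]$. Combining these facts, $X(T_f/K_\infty)_H$ is a finitely generated torsion $O_f[[\Gamma]]$-module. Setting $M := X(T_f/K_\infty)$, topological Nakayama's Lemma applied to the compact $O_f[[\Gamma_K]]$-module $M$ with respect to the augmentation ideal $I_H$ of $O_f[[H]] \subset O_f[[\Gamma_K]]$ then yields that $M$ is finitely generated over $O_f[[\Gamma_K]]$, since $M/I_H M = M_H$ is finitely generated over $O_f[[\Gamma_K]]/I_H \cong O_f[[\Gamma]]$.

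For torsion, put $\La := O_f[[\Gamma]]$ and $\La' := O_f[[\Gamma_K]] \cong \La[[S]]$, where $S = h-1$ for a topological generator $h$ of $H$. By the structure theorem for finitely generated $\La'$-modules, there is a pseudo-isomorphism $\phi : M \to {\La'}^r \oplus T$ with $T$ a torsion $\La'$-module and $r = \mathrm{rank}_{\La'}(M)$. The essential observation is that for any pseudo-null $\La'$-module $N$, both $N_H$ and $N[S]$ are $\La$-torsion: indeed, $\mathrm{ann}_{\La'}(N)$ has height $\geq 2$ in $\La'$ and therefore is not contained in the height-one prime $(S)$, so it contains an element whose image in $\La$ is nonzero and annihilates both $N_H$ and $N[S]$. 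Applying this to $\ker\phi$, $\mathrm{coker}\,\phi$, and the Tor terms arising from tensoring with $\La$ over $\La'$, I deduce that $M_H$ and $({\La'}^r \oplus T)_H = \La^r \oplus T_H$ have the same $\La$-rank. If $r \geq 1$, this rank would be at least $r > 0$, contradicting the $\La$-torsion of $M_H$ established above; hence $r = 0$ and $M$ is $\La'$-torsion.

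The hard part is the torsion step: it relies on the commutative-algebra claim that $H$-coinvariants and $S$-torsion of pseudo-null $\La'$-modules are $\La$-torsion, together with careful comparison of $\La$-ranks through the pseudo-isomorphism and the short exact sequences (and first Tor term) that it induces.
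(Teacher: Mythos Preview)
Your proof is correct and follows essentially the same route as the paper: both reduce to $K_\cyc$ via Proposition~\ref{ctcm} and Kato (Remark~\ref{torka}) to see that $X(T_f/K_\infty)_H$ is finitely generated torsion over $O_f[[\Gamma]]$, then deduce finite generation over $O_f[[\Gamma_K]]$ by Nakayama, and finally conclude $\mathrm{rank}_{O_f[[\Gamma_K]]}\,X(T_f/K_\infty)=0$ from the vanishing of the $O_f[[\Gamma]]$-rank of the $H$-coinvariants.

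The only difference is in how the last step is justified. The paper invokes directly the general rank inequality
\[
\mathrm{rank}_{O_f[[\Gamma_K]]/I}\bigl(M/IM\bigr) \;\geq\; \mathrm{rank}_{O_f[[\Gamma_K]]}(M)
\]
for an ideal $I$, applied with $I$ the augmentation ideal of $H$. You instead prove this inequality in the relevant case by hand via the structure theorem, checking that pseudo-null kernels/cokernels contribute only $\Lambda$-torsion to the $H$-coinvariants (and to $\text{Tor}_1$). Your argument is correct---the key point that $\mathrm{ann}_{\La'}(N)\not\subset (S)$ for $N$ pseudo-null, hence $N_H$ and $N[S]$ are $\La$-torsion, is exactly what is needed---but the paper's one-line inequality is more economical and avoids any mention of the structure theorem.
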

\proof  By remark \ref{torka}, we have $X(T_f/K_\cyc)$ is a finitely generated torsion $O_f[[\Gamma]]$ module. By Proposition \ref{ctcm}, we also have an exact sequence $$0 \lra F_1 \lra X(T_f/K_\infty)_H \lra X(T_f/K_\cyc) \lra F_2 \lra 0$$ with $F_1$ is a finitely generated $\Z_p$ modules and $F_2$ is finite. Thus we get that $X(T_f/K_\infty)_H $ is a finite generated torsion $O_f[[\Gamma]]$ module. But for an ideal $I \neq O_f[[\Gamma_K]]$ in $O_f[[\Gamma_K]]$ 
$$ \text{rank}_{\frac{O_f[[\Gamma_K]]}{I}} \quad \quad \frac{X(T_f/K_\infty)}{IX(T_f/K_\infty)}  \geq \text{rank}_{O_f[[\Gamma_K]]} ~X(T_f/K_\infty) .$$ Identifying $O_f[[\Gamma_K]]$ with $O_f[[T_1, T_2]]$ and $X(T_f/K_\infty)_H$ with $\frac{X(T_f/K_\infty)}{T_2 X(T_f/F_\infty)}$, we deduce that $\text{rank}_{O_f[[\Gamma_K]]} ~X(T_f/K_\infty)  =0$. The argument for $X(T^*_f/F_\infty)$ is  similar. \qed

\begin{defn}
We call  $f \in S_2(\Gamma_0(Np),\psi)$ exceptional if $f$ is a newform of conductor $Np$ with $(N,p) = (\text{conductor of } \psi, p) =1$.
\end{defn}

\begin{theorem}\label{fefibrem}
Let the notation be as before.  Let $f \in S_k(\Gamma_0(Np^r),\psi)$ be a $p$-stabilized newform which is not exceptional.   Also assume that $(N, D_K) = (p, D_K) =1$.  Then the functional equation holds for $X(T_f/K_\infty)$ i.e. we have a equality of  ideals in $O_f[[\Gamma_K]]$, $$Ch_{O_f[[\Gamma_K]]}({X(T_f/K_\infty)}) =Ch_{O_f[[\Gamma_K]]}({ X(T^*_f/K_\infty)^\iota}) .$$
\end{theorem}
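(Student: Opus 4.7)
The plan is to mirror the proof of Theorem \ref{fefibre}, adapting each step to the $\Z_p^2$-extension $K_\infty/K$. Let $K_n$ denote the fixed field of $p^n\Gamma_K$ in $K_\infty$, so $K_\infty = \bigcup_n K_n$ and $\Gamma_K^{(n)} := \Gal(K_\infty/K_n) \cong \Z_p^2$. The four main ingredients needed are (i) a control theorem at finite layers $K_n$, (ii) existence of a suitable twist, (iii) Flach's generalized Cassels–Tate pairing at finite layers, and (iv) the $\Z_p^2$-analogs of Lemmas \ref{a1} and \ref{pa1}, which are available in \cite[\S 1.3, \S 3]{pr}.

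First, a control theorem analogous to Theorem \ref{ctcyc} must be established: for any character $\rho : \Gamma_K \to O_f^\times$, the kernel and cokernel of the natural map
$$X(T_f \otimes T_\rho/K_\infty)_{\Gamma_K^{(n)}} \lra X(T_f \otimes T_\rho/K_n)$$
should be finitely generated $\Z_p$-modules of $\Z_p$-rank uniformly bounded in $n$. The diagram chase is the same as in Theorem \ref{ctcyc}. The global kernel $H^1(\Gamma_K^{(n)}, V_f(\rho)^{G_{K_\infty}})$ is handled by showing $V_f(\rho)^{G_{K_n}} = 0$ via Frobenius eigenvalues at primes of $K_n$ not dividing $NpD_K$. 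Since $K_\infty/K$ is unramified away from $p$, local kernels at $v \nmid p$ vanish. Because $p$ splits in $K$ as $\mathfrak{p}\bar{\mathfrak{p}}$ (with each prime totally ramified in $K_\infty/K$), the local kernel at $v \mid p$ reduces via an inflation-restriction argument to $\Hom(I_{v_\infty/v_n}, (T_\rho \otimes (A_f)_{\mathfrak{p}}^{-})^{G_{v_n}})$; the non-exceptionality of $f$ (together with remark \ref{grc}) ensures that the Frobenius eigenvalue on $(A_f)_{\mathfrak{p}}^{-}$ is not a root of unity, and since $\rho$ is trivial on $G_{K_\infty}$, the twisted Frobenius eigenvalue on $T_\rho \otimes (A_f)_{\mathfrak{p}}^{-}$ remains a non-root of unity, forcing $(T_\rho \otimes (A_f)_{\mathfrak{p}}^{-})^{G_{v_n}}$ to be finite of bounded order; since $I_{v_\infty/v_n} \cong \Z_p^2$ is topologically finitely generated, the full $\Hom$ group is likewise finite of bounded size.

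Next, by the $\Z_p^2$-analog of Lemma \ref{tw} applied to the finitely generated torsion $O_f[[\Gamma_K]]$-modules $X(T_f/K_\infty)$ and $X(T^*_f/K_\infty)$ (torsion by Corollary \ref{seltm}), one finds a single character $\rho : \Gamma_K \to O_f^\times$ such that both $X(T_f \otimes T_\rho/K_n)$ and $X(T^*_f \otimes T_{\rho^{-1}}/K_n)$ are finite for every $n$. Flach's pairing at each finite layer gives $S(T_{\rho^{-1}} \otimes A^*_f/K_n) \cong X(T_\rho \otimes T_f/K_n)$. Passing to the inverse limit and using the control theorem from Step 1 together with the $\Z_p^2$-version of Lemma \ref{a1} produces a pseudo-isomorphism of $O_f[[\Gamma_K]]$-modules
$$X(T_f \otimes T_\rho/K_\infty) \lra a^1_{\La}\bigl(X(T^*_f \otimes T_{\rho^{-1}}/K_\infty)^\iota\bigr),$$
where now $\La = O_f[[\Gamma_K]]$. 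Tensoring with $T_\rho$ removes the twist (as in \eqref{twist} and the discussion surrounding \eqref{thetarho}, which generalizes verbatim to $\Gamma_K$) and yields a pseudo-isomorphism $X(T_f/K_\infty) \lra a^1_{\La}(X(T^*_f/K_\infty)^\iota)$ independent of $\rho$. Applying the $\Z_p^2$-analog of Lemma \ref{pa1} then gives the claimed equality of characteristic ideals in $O_f[[\Gamma_K]]$.

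The main obstacle is the control theorem of Step 1. In contrast to the cyclotomic situation, $\Gamma_K^{(n)} \cong \Z_p^2$ has $p$-cohomological dimension $2$, so $H^2$ contributions no longer vanish automatically; fortunately the argument only needs finiteness (with uniform bound) of the kernel and cokernel of the restriction maps, not their vanishing, so the diagram chase still goes through. The local analysis at primes above $p$ is also slightly more delicate because $I_{v_\infty/v_n}$ is rank-two rather than rank-one, but this does not affect the conclusion since $\Hom(\Z_p^2, \text{finite bounded})$ is still finite and bounded. The rest of the argument transfers formally from the cyclotomic case handled in Theorem \ref{fefibre}.
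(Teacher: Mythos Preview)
The paper's proof is quite different and much shorter: it simply verifies that the hypotheses $\text{Hyp}(K_\infty,V_f)$, $\text{Hyp}(K_\infty,V_f^\vee)$ and $\text{Tors}(K_\infty,V_f)$ of \cite[Theorem 4.2.1]{pr} are satisfied (the first two from non-exceptionality and the coprimality conditions, the third from Corollary~\ref{seltm}) and then invokes that theorem directly. Perrin-Riou's result already contains the full machinery of twisting, control, Flach's pairing, and the $a^1_\Lambda$ duality over general $\Z_p^d$-extensions, so there is nothing to redo here.

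Your approach is essentially a sketch of a re-proof of \cite[Theorem 4.2.1]{pr} in this particular case, and the strategy is indeed Perrin-Riou's. However, your local analysis at primes above $p$ contains errors. When $p$ splits in $K$ as $\mathfrak p\bar{\mathfrak p}$, these primes are \emph{not} totally ramified in $K_\infty/K$: the local decomposition group at $\mathfrak p$ is (up to finite index) all of $\Gamma_K\cong\Z_p^2$, but the inertia subgroup is only $\cong\Z_p$, since the maximal abelian pro-$p$ extension of $K_{\mathfrak p}\cong\Q_p$ has a $\Z_p$ unramified piece. Hence $I_{v_\infty/v_n}\cong\Z_p$, not $\Z_p^2$; more importantly, because there is a genuine unramified $\Z_p$-direction, a lift of Frobenius can have infinite-order image in $\Gamma_K$, so $\rho(\text{Fr})$ need not be a root of unity and your argument that the twisted Frobenius eigenvalue on $T_\rho\otimes(A_f)_{\mathfrak p}^-$ is never a root of unity breaks down. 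These issues are exactly what the hypotheses in \cite{pr} are designed to handle, and sorting them out correctly would amount to rewriting the relevant portion of that paper; this is why the authors simply cite it.
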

\proof   Note as the $p$-ordinary, $p$-stabilized newform  $f$ is not exceptional and we have, $(N, D_K) = (p, D_K) =1$;  the Galois representation $( \rho_f, V_f)$ satisfies both $(\text{Hyp}(K_\infty, V_f))$ and  $(\text{Hyp}(K_\infty, V^\vee_f))$ assumptions of \cite[Theorem 4.2.1]{pr}. Also, by corollary \ref{seltm}, we get that both  $X(T_f/K_\infty)$ and $X(T^*_f/K_\infty)^\iota$ are torsion over $O_f[[\Gamma_K]]$. Thus the condition $(\text{Tors}(K_\infty, V_f))$  in \cite[Theorem 4.2.1]{pr} is also satisfied.  Hence the theorem  follows  from \cite[Theorem 4.2.1]{pr}. \qed 

\medskip

Let us recall  from remark \ref{elipticcase}, $\mathcal T_\mathcal R$ is a lattice associated to a fixed branch $\mathcal R$ of the ordinary Hida family of elliptic modular form of tame level $N = N_\mathcal R$.

\begin{proposition}\label{splm}
Assume {\bf (Irr)}, {\bf (Dist)} and $\mathcal R $ is a power series ring. Then the kernel and the cokernel of the natural specialization map 
\begin{equation}\label{splzm}
\x(\T_\mathcal R/K_\infty)/{P_\xi   \x(\T_\mathcal R/K_\infty)} \stackrel{\beta_\xi^\vee}{\lra} X(T_{f_\xi}/K_\infty)
\end{equation}
 are pseudonull $O_{f_\xi}[[\Gamma_K]] \cong O_{f_\xi}[[T_1, T_2]]$  module  for all but finitely many $\xi \in \mathfrak X(\mathcal R)$. In particular,  the equality $$Ch_{O_{f_\xi}[[\Gamma_K]]}({\x(\T_\mathcal R/K_\infty)/{P_\xi \x(\T_\mathcal R/K_\infty)}}) = Ch_{O_{f_\xi}[[\Gamma_K]]}({X(T_{f_\xi}/K_\infty)})$$ as ideals in $O_{f_\xi}[[\Gamma_K]]$ holds for all    but finitely many arithmetic points.

\end{proposition}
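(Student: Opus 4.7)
The plan is to run the argument of Theorem~\ref{spl} line-for-line, with two adjustments: we replace $F_\cyc$ by $K_\infty$ (so the coefficient ring becomes $O_{f_\xi}[[\Gamma_K]] \cong O_{f_\xi}[[T_1,T_2]]$, a regular local ring of Krull dimension $3$), and the conclusion ``finite'' is weakened to ``pseudonull.'' First I would assemble the commutative diagram analogous to \eqref{splzc} comparing the defining sequence of $S(\mathcal A/K_\infty)[P_\xi]$ with that of $S(A_{f_\xi}/K_\infty)$, the local terms being $H^1(I_{v_\infty},\mathcal A)^{G_{v_\infty}}$ (for $v_\infty \nmid p$) and $H^1(I_{v_\infty}, \mathcal A^-_{\mathfrak p})^{G_{v_\infty}}$ (for $v_\infty \mid \mathfrak p \mid p$), with vertical arrows $\beta_\xi, \eta_\xi, \delta_\xi = \oplus_{v_\infty} \delta_\xi^{v_\infty}$. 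The hypothesis $(\mathbf{Irr})$ together with \cite[Remark~3.4.1]{g3} forces $\eta_\xi$ to be an isomorphism; a Snake Lemma chase then gives $\ker(\beta_\xi) = 0$, hence $\mathrm{coker}(\beta_\xi^\vee) = 0$, which is trivially pseudonull.

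For the kernel, the Snake Lemma shows that $\ker(\beta_\xi^\vee)$ is a quotient of $\bigoplus_{v_\infty} \ker(\delta_\xi^{v_\infty})^\vee$, so it suffices to establish pseudonullity of each local summand. When more than one prime $v_\infty$ of $K_\infty$ sits above a given $v \in S_K$, I would apply Shapiro's lemma to replace the sum over $v_\infty \mid v$ by a single induced module from the decomposition subgroup $\Gamma_{K_v} \subset \Gamma_K$, treating one prime at a time. I would then import verbatim the iterated Snake Lemma argument of Theorem~\ref{spl}: writing $P_\xi = (x_1, \ldots, x_r)$ as a regular sequence in the (regular) power series ring $\mathcal R$ and setting $P_i = (x_1,\ldots,x_i)$, the successive extensions
\[
0 \to \mathcal A^{I_{v_\infty}}[P_{i-1}]/x_i \mathcal A^{I_{v_\infty}}[P_{i-1}] \to \mathrm{Ker}_i \to \mathrm{Ker}_{i-1}[x_i] \to 0
\]
and their Pontryagin duals exhibit $\ker(\delta_\xi^{v_\infty})^\vee = \mathrm{Ker}_r^\vee$ as an iterated extension of subquotients of $\mathcal T^\ddagger_G / P_\xi \mathcal T^\ddagger_G$ and $\mathcal T^\ddagger_G[P_\xi]$ for $G \in \{I_{v_\infty}, G_{v_\infty}\}$. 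Since $\mathcal T^\ddagger_G$ is finitely generated over $\mathcal R$, each successive factor is finitely generated over $\mathcal R/P_\xi \cong O_{f_\xi}$, and hence so is $\mathrm{Ker}_r^\vee$.

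The passage from ``finitely generated over $O_{f_\xi}$'' to ``pseudonull over $O_{f_\xi}[[\Gamma_K]]$'' is a purely algebraic observation: if $M$ is finitely generated over $O_{f_\xi}$ and carries a continuous $\Gamma_K$-action, then $O_{f_\xi}[[\Gamma_K]]/\mathrm{Ann}(M)$ embeds into $\mathrm{End}_{O_{f_\xi}}(M)$, so it is finitely generated over $O_{f_\xi}$; as $\dim O_{f_\xi} = 1$ and $\dim O_{f_\xi}[[T_1,T_2]] = 3$, the annihilator has height $\geq 2$ and $M$ is pseudonull. The clause ``for all but finitely many $\xi$'' in the statement enters only at the ``in particular'' step, where pseudonullity of kernel/cokernel is promoted to an equality of characteristic ideals: one invokes \cite[Theorem~2.1]{g3} applied to the finitely many $\mathcal R$-modules $\mathcal T^\ddagger_G$ to produce a non-zero ideal $J \subset \mathcal R$ such that the relevant specialization ranks behave correctly for every $P_\xi \not\supseteq J$, and discards the finite set of $\xi$ with $P_\xi \supseteq J$.

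The main obstacle I anticipate is the local bookkeeping at primes $v \nmid p$ of $K$: the decomposition subgroup $\Gamma_{K_v} \subset \Gamma_K \cong \Z_p^2$ can have rank $0$, $1$, or $2$, and the Shapiro-type induction must be set up carefully so that the resulting local kernels are finitely generated over $O_{f_\xi}$ itself rather than merely over an intermediate Iwasawa algebra such as $O_{f_\xi}[[\Gamma_K/\Gamma_{K_v}]]$. Getting this finiteness uniformly in $v$ is precisely what lets the height-of-annihilator argument deliver pseudonullity, and it is the step I would expect to require the most care.
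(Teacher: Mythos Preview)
Your annihilator-height argument for pseudonullity is correct as stated for an $O_{f_\xi}[[\Gamma_K]]$-module that is finitely generated over $O_{f_\xi}$, but the object you need to control is not of this type. The local kernel $\ker(\delta^{v_\infty}_\xi)^\vee$ at a single prime $v_\infty$ carries only a $\Gamma_{K_v}$-action, not a $\Gamma_K$-action; the module with a genuine $\Gamma_K$-action is the induced module $\mathrm{Ind}_{\Gamma_{K_v}}^{\Gamma_K}\ker(\delta^{v_\infty}_\xi)^\vee$, and when $\Gamma_{K_v}$ has rank $1$ in $\Gamma_K\cong\Z_p^2$ this induced module is only finitely generated over $O_{f_\xi}[[\Gamma_K/\Gamma_{K_v}]]$, exactly the intermediate algebra you flag as worrisome. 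Your height argument then yields annihilator of height $\geq 1$, i.e.\ torsion, but not height $\geq 2$, so pseudonullity does not follow. A concrete counterexample: $M=O_{f_\xi}$ with trivial $\Gamma_{K_v}\cong\Z_p$ action induces up to $O_{f_\xi}[[\Gamma_K]]/(\gamma-1)$, which is not pseudonull.

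The paper closes this gap by a different route, exploiting two facts you do not use. First, in the elliptic setting $\mathcal R\cong O[[W]]$ is a \emph{one}-variable power series ring, so $P_\xi$ is principal and the iterated regular-sequence argument is unnecessary: the local kernel is simply $(\mathcal T^\ddagger_{I_{v_\infty}})_{\mathrm{tor}}[P_\xi]$, which is \emph{finite} (not merely f.g.\ over $O_{f_\xi}$) for all but the finitely many $\xi$ whose $P_\xi$ lies in the support of the torsion $\mathcal R$-module $(\mathcal T^\ddagger_{I_{v_\infty}})_{\mathrm{tor}}$. Second, your worry about rank-$0$ decomposition groups is unfounded: since $K_\infty\supset K_\cyc$ and every prime of $K$ is finitely decomposed in $K_\cyc$, one always has $\dim\Gamma_{K_v}\geq 1$, so $O_{f_\xi}[[\Gamma_{K_v}]]$ has Krull dimension $\geq 2$ and a finite module over it is pseudonull. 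One then invokes Venjakob's $\mathrm{Ext}$-compatibility $\mathrm{Ext}^i_{O_{f_\xi}[[\Gamma_K]]}(\mathrm{Ind}\,M,O_{f_\xi}[[\Gamma_K]])\cong\mathrm{Ind}\,\mathrm{Ext}^i_{O_{f_\xi}[[\Gamma_{K_v}]]}(M,O_{f_\xi}[[\Gamma_{K_v}]])$ for $i=0,1$ to transport pseudonullity through the induction. Thus the ``all but finitely many $\xi$'' already enters at the pseudonullity step, not only at the characteristic-ideal comparison. A secondary point: the paper does not claim $\eta_\xi$ is an isomorphism here, since {\bf (Irr)} is stated over $G_\Q$ and $\bar\rho_{\mathcal R}|_{G_K}$ may be reducible; instead $\ker(\eta_\xi)^\vee\cong(\mathcal T^\ddagger_{G_{K_\infty}})_{\mathrm{tor}}[P_\xi]$ is handled by the same finiteness argument.
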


\proof The proof is different from the proof of  Theorem \ref{spl} in two points. Here, we have to overcome the difficulty that there are possibly infinitely many primes in $K_\infty$ lying above a given prime in $K$. On the other hand, here  we have the advantage that $ \mathcal R \cong O[[W]]$, so that $P_\xi$ is principal.

Let us keep the  notation as set up in the beginning of section \ref{sec6}. 
For  a finitely generated $O_{f_\xi}[[{\Gamma_K}_v]]$ module $M$,  recall $\text{Ind}_{{\Gamma_K}_v}^{\Gamma_K} M := O_{f_\xi} [[\Gamma_K]] \otimes_{O_{f_\xi}[[{\Gamma_K}_v]]} M$. Recall, by Shapiro's lemma $H_i(\Gamma_K, \text{Ind}_{{\Gamma_K}_v}^{\Gamma_K} M) \cong H_i({\Gamma_K}_v, M)$ for any $i \geq 0$. 

Now, we have the commutative diagram with the natural maps 
\begin{equation}\label{splzcm} 
 \xymatrix{  0 \ar[r] & S(  \mathcal A/K_\infty)[P_\xi] \ar[r]  & H^1(K_S/K_\infty , \mathcal A)[P_\xi]  \ar[r] &  \underset {v \in S} {\prod}J_v(\mathcal A)[P_\xi]  \\ 
                   0 \ar[r] & S(  A_{f_\xi}/K_\infty) \ar[r] \ar[u]_{\beta_\xi}&  H^1(K_S/K_\infty ,   A_{f_\xi}) \ar [r]  \ar[u]_{\eta_\xi}&   \underset {v \in S} {\prod}J_v(\mathcal A_{f_\xi})  \ar[u]_{\delta_\xi = \prod \delta_v} .}
\end{equation}
Here \[
 J_v(B) : =
  \begin{cases}
   (\text{Ind}_{{\Gamma_K}_v}^{\Gamma_K} H^1(I_{v_\infty}, B)^\vee_{G_{{v_\infty}}} )^\vee & \text{if } v \in S, v\nmid p\\
   (\text{Ind}_{{\Gamma_K}_v}^{\Gamma_K} H^1(I_{v_\infty}, B^-)^\vee_{G_{{v_\infty}}} )^\vee & \text{if } v|p,
  \end{cases}
\]
for $B = \mathcal A$ or $B = A_{f_\xi}.$ Recall, once again $A_{f_\xi} \cong \mathcal A[P_\xi] $ and $A^-_{f_\xi} \cong \mathcal A^-[P_\xi] $. Also by our assumption that $\mathcal R$ is a power series ring, we have every $P_\xi$ is principal ideal in $\mathcal R$. Then  $\eta_\xi$ is surjective with $\text{ker}(\eta_\xi) \cong \mathcal A^{G_{K_\infty}}/{P_\xi A^{G_{K_\infty}}}$.  To simplify notation, we put $\mathcal T : = \mathcal T_\mathcal R$ in the proof of this theorem. Then $\text{ker}(\eta_\xi) ^\vee \cong (\mathcal {T}^\dagger_{G_{K_\infty}})_\text{tor}[P_\xi].$ Note, only finitely many $P_\xi$ divide the $\RR$ characteristic ideal of finitely generated torsion $\RR$ module $(\mathcal {T}^\dagger_{G_{K_\infty}})_\text{tor} $.  Hence we deduce that the $\text{ker}(\beta_\xi)^\vee$ is finite (hence $O_{f_\xi}[[{\Gamma_K}]]$ pseudonull) for all but finitely many $\xi \in \mathfrak X(\mathcal R).$

\n As before, it suffices to show  that $\text{ker}(\delta_\xi)^\vee$ is $O_{f_\xi}[[{\Gamma_K}]]$ pseudonull  leaving out  finitely many exceptional $\xi$.    
Now it is easy to see that \[
 (\text{ker}(\delta_v))^\vee  =
  \begin{cases}
   \text{Ind}_{{\Gamma_K}_v}^{\Gamma_K} ((\frac{{\mathcal A}^{I_{v_\infty}}}{P_\xi  {\mathcal A}^{I_{v_\infty}}})^{G_{v_\infty}} )^\vee  \cong  \text{Ind}_{{\Gamma_K}_v}^{\Gamma_K} ({\mathcal T}^\dagger_{I_{v_\infty}}[P_\xi])_{G_{v_\infty}} & \text{if } v \in S, v \nmid p\\
   \text{Ind}_{{\Gamma_K}_v}^{\Gamma_K} ((\frac{{\mathcal A^-}^{I_{v_\infty}}}{P_\xi  {\mathcal A^-}^{I_{v_\infty}}})^{G_{v_\infty}} )^\vee  \cong  \text{Ind}_{{\Gamma_K}_v}^{\Gamma_K}  ({\mathcal T^-}^\dagger_{I_{v_\infty}}[P_\xi])_{G_{v_\infty}} & \text{if } v|p,
    \end{cases}
\]
But for a prime $v$ in $S$ not lying above $p$, we have $ \mathcal T^\dagger_{I_{v_\infty}}[P_\xi]\cong  ({\mathcal T^\dagger_{I_{v_\infty}}})_{\text{tor}}[P_\xi] $  which is, as before, finite for all but finitely many $\xi$. Notice that  $K_\infty$ contains $K_\cyc$ and hence the dimension of ${\Gamma_K}_v$ as a $p$-adic Lie group is at least one.  Hence Krull dimension of the commutative ring $O_{f_\xi}[[{\Gamma_K}_v]]$ is at least $2$. Thus $  ({\mathcal T}^\dagger_{I_{v_\infty}}[P_\xi])_{G_{K_{\infty, v_\infty}}}$ is finite and hence pseudonull as an $O_{f_\xi}[[{\Gamma_K}_v]]$ module. Also, for a finitely generated $O_{f_\xi}[[{\Gamma_K}_v]]$ module $M$ and for $i = 0,1$ we have \cite[Lemma 2.7(i)]{ve}
\begin{scriptsize}
$$\text{Ext}^i_{O_{f_\xi}[[{\Gamma_K}]]} (O_{f_\xi}[[{\Gamma_K}]] \otimes_{O_{f_\xi}[[{\Gamma_K}_v]]}  M, O_{f_\xi}[[{\Gamma_K}]]) \cong O_{f_\xi}[[{\Gamma_K}]] \otimes_{O_{f_\xi}[[{\Gamma_K}_v]]} \text{Ext}^i_{O_{f_\xi}[[{\Gamma_K}_v]]} (  M, O_{f_\xi}[[{\Gamma_K}_v]]).$$
\end{scriptsize}
But a finitely generated $\La$ ( for $\La = O_{f_\xi}[[{\Gamma_K}]] $ or $O_{f_\xi}[[{\Gamma_K}_v]]$) module $M$ is pseudonull if and only if $\text{Ext}^i_\La(M, \La) =0 $ for $i=0,1$ (\cite{ve}). Thus  we see that $ \text{Ind}_{{\Gamma_K}_v}^{\Gamma_K} ({\mathcal T}^\dagger_{I_{v_\infty}}[P_\xi])_{G_{v_\infty}}$ is pseudonull as a $O_{f_\xi}[[{\Gamma_K}]]$ module for all but finitely many $\xi$ and for any $v \in S, v\nmid p$. The same argument holds for a prime $v$ in $S$ dividing $p$ if we replace  $\mathcal T$ by $\mathcal T^-$. Combining these for finitely many $v$ in $S$, we deduce that  $(\text{ker}(\delta_\xi))^\vee$ is a pseudonull $O_\xi[[{\Gamma_K}]]$ module for  all but finitely many  $\xi \in \mathfrak X(\mathcal R)$. This completes the proof. \qed

\begin{rem}\label{splzddm}
We have analogous  of   remarks \ref{splzd} and \ref{splzdd} for $K_\infty$ for the map $\x(\T^*_\mathcal R/K_\infty)/{P_\xi   \x(\T^*_\mathcal R/K_\infty)} \stackrel{\beta_\xi^{*^\vee}}{\lra} X(T_{f_\xi}/K_\infty).$  Proceeding in Proposition \ref{splm} and using Corollary \ref{seltm} we can get, $$Ch_{O_{f_\xi}[[\Gamma_K]]}({\x(\T^*_\mathcal R/K_\infty)/{P_\xi   \x(\T^*_\mathcal R/K_\infty)}}) =Ch_{O_{f_\xi}[[\Gamma_K]]}({ X(T^*_{f_\xi}/K_\infty)})$$ as ideals in $O_{f_\xi}[[\Gamma_K]]$  for all but finitely many $\xi \in \mathfrak X(\mathcal R).$  Further, applying the involution $\iota$, $$Ch_{O_{f_\xi}[[\Gamma_K]]}(\x(\T^*_\mathcal R/K_\infty)^\iota/{P_\xi   \x(\T^*_\mathcal R/K_\infty)^\iota}) =  Ch_{O_{f_\xi}[[\Gamma_K]]}({X(T^*_{f_\xi}/K_\infty)^\iota})$$ holds. Moreover, proceeding as in Corollary \ref{bselt}, we  deduce that both $\x(\T_\mathcal R/K_\infty)$ and $\x(\T^*_\mathcal R/K_\infty)$ are finitely generated {\bf torsion} modules over $\mathcal R[[\Gamma_K]]$.
\end{rem} 

\begin{proposition}\label{bigpm}
 For any finitely generated $ \mathcal R[[\Gamma_K]]$ module  $U$, let $U^0$ denotes the maximal pseudonull $ \mathcal R[[\Gamma_K]]$  submodule of  $U$. Then  $\frac{\mathcal X(\T_\mathcal R/K_\infty)^0}{P_\xi \mathcal X(\T_\mathcal R/K_\infty)^0}$ and  $\frac{\mathcal X(\T^*_\mathcal R/K_\infty)^0}{P_\xi \mathcal X(\T^*_\mathcal R/K_\infty)^0} $ are pseudonull $O_{f_\xi}[[\Gamma_K]]$ modules for any $\xi \in \mathfrak X (\mathcal R)$. 
\end{proposition}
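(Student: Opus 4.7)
The plan is to mirror the blueprint of Proposition~\ref{bigp} with $F_\cyc$ replaced by $K_\infty$ and $\Gamma$ replaced by $\Gamma_K$, treating only $\mathcal{X}(\mathcal T_\mathcal R/K_\infty)^0$ since the argument for the Tate twist is identical. Two structural ingredients will drive the proof: (i) the strict Selmer group $S^{str}(\mathcal A/K_\infty)$ coincides with the Greenberg Selmer group $S(\mathcal A/K_\infty)$, which is the direct $K_\infty$-analogue of the identity extracted from \cite[Corollary 3.4]{fo}; and (ii) for each $\xi \in \mathfrak X(\mathcal R)$ one has $H^2(K_S/K_\infty, A_{f_\xi}) = 0$. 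Ingredient~(ii) will follow by combining Corollary~\ref{seltm}, which tells us that $X(T^*_{f_\xi}/K_\infty)$ is $O_{f_\xi}[[\Gamma_K]]$-torsion, with the global Euler characteristic / duality argument of \cite[Proposition 2.3]{hm} adapted to $K_\infty$. Ingredient~(i), applied together with the torsion-ness of $\mathcal X(\mathcal T_\mathcal R/K_\infty)$ recorded in remark~\ref{splzddm}, will let us invoke (the $K_\infty$-analogue of) \cite[Corollary 4.12]{o} to conclude that the defining map of $S^{str}(\mathcal A/K_\infty)$ is surjective onto the product of local terms.

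With these in place, the second step is to write down the commutative diagram obtained by applying multiplication by $P_\xi$ to the defining four-term exact sequence of $S^{str}(\mathcal A/K_\infty)$, exactly mirroring \eqref{bseld} but with local factors of the form $\prod_{v_\infty \mid v} H^1(K_{\infty,v_\infty}, \cdot)$. Since $\mathcal R$ is a power series ring, $P_\xi$ is principal, so $\mathcal A[P_\xi] \cong A_{f_\xi}$ and $\mathcal A^-[P_\xi] \cong (A_{f_\xi})^-$ as Galois modules. The vanishing from (ii) forces the middle vertical cokernel to be zero, and a snake-lemma chase then identifies
\[
 \frac{S^{str}(\mathcal A/K_\infty)}{P_\xi S^{str}(\mathcal A/K_\infty)} \;\cong\; \mathrm{coker}\bigl(H^1(K_S/K_\infty, \mathcal A)[P_\xi] \lra W[P_\xi]\bigr),
\]
where $W$ is the corresponding product of local cohomologies. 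A second diagram comparing $P_\xi$-torsion with the Selmer sequence at $\xi$ shows that the natural map $W_\xi \to W[P_\xi]$ is surjective, which forces the displayed cokernel to vanish. Hence $S(\mathcal A/K_\infty)/P_\xi S(\mathcal A/K_\infty) = 0$, and dualizing gives $\mathcal X(\mathcal T_\mathcal R/K_\infty)[P_\xi] = 0$. In particular $\mathcal X(\mathcal T_\mathcal R/K_\infty)^0[P_\xi] = 0$, which is trivially pseudonull as an $O_{f_\xi}[[\Gamma_K]]$-module.

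The final step is to invoke \cite[Lemma 3.1]{o3}: for any finitely generated pseudonull $\mathcal R[[\Gamma_K]]$-module $M$, the quotient $M/P_\xi M$ is pseudonull as an $O_{f_\xi}[[\Gamma_K]]$-module if and only if $M[P_\xi]$ is; applied to $M = \mathcal X(\mathcal T_\mathcal R/K_\infty)^0$ this immediately yields the assertion. The main obstacle I anticipate is ingredient~(i): verifying the identification $S^{str}(\mathcal A/K_\infty) = S(\mathcal A/K_\infty)$ over the two-dimensional extension $K_\infty$ requires a local analysis at the two primes of $K$ above $p$ that is slightly more delicate than in \cite[Corollary 3.4]{fo} because $K_\infty$ contains the anticyclotomic $\Z_p$-extension along which inertia at $p$-adic primes is non-trivial; however, since $p$ splits in $K$ and the Hida filtration $\mathcal A_{\mathfrak p}^{-}$ carries an unramified quotient character at each such prime, the argument of \emph{loc.\,cit.}\ adapts essentially verbatim, so no genuinely new input is required.
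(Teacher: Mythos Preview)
Your approach differs from the paper's in one essential point: you assert as ingredient~(i) that $S^{str}(\mathcal A/K_\infty)=S(\mathcal A/K_\infty)$ and then run the argument of Proposition~\ref{bigp} verbatim. The paper, by contrast, \emph{explicitly declines} to make this identification, writing that ``the strict Selmer group and the usual Selmer group for $\mathcal A$ over $K_\infty$ may not coincide,'' and instead proves $\mathcal X(\mathcal T_\mathcal R/K_\infty)[P_\xi]=0$ by splitting it into two pieces via the exact sequence $0\to S'(\mathcal A/K_\infty)\to S(\mathcal A/K_\infty)\to U_p(\mathcal A)\to 0$: first $\mathcal X'(\mathcal T_\mathcal R/K_\infty)[P_\xi]=0$ by the $W'_\xi\twoheadrightarrow \mathcal W'[P_\xi]$ argument (this is your ``second diagram''), and second $U_p(\mathcal A)^\vee[P_\xi]=0$ by a direct computation using that $(\mathcal T_\mathcal R^{\dagger})^-$ is $\mathcal R$-free. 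The final appeal to \cite[Lemma~3.1]{o3} is common to both.

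Your ingredient~(i) is in fact \emph{true}, so your route does work; but your justification is not the right one. The argument of \cite[Corollary~3.4]{fo} over $F_\cyc$ uses the specific Galois action on $\mathcal A^-$, since over $F_\cyc$ the quotient $G_{v_c}/I_{v_c}\cong\hat{\Z}$ still has a pro-$p$ part (the primes above $p$ are totally ramified there). Over $K_\infty$ the mechanism is different and simpler: because $p$ splits in $K$, the inertia subgroup $I_{\mathfrak p}\subset\Gamma_K$ at each $\mathfrak p\mid p$ maps isomorphically onto $\Gamma=\mathrm{Gal}(K_\cyc/K)$, while the decomposition group has $\Z_p$-rank $2$; hence $K_{\infty,v_\infty}$ contains the unramified $\Z_p$-extension of $K_{\mathfrak p}\cong\Q_p$, and $G_{v_\infty}/I_{v_\infty}\cong\prod_{\ell\neq p}\Z_\ell$ has trivial pro-$p$ part. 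This forces $H^1(G_{v_\infty}/I_{v_\infty},(\mathcal A^-)^{I_{v_\infty}})=0$ and hence $U_p(\mathcal A)=0$. If you spell this out, your proof is valid and indeed slightly more direct than the paper's; the paper's approach has the advantage of not needing this local analysis and would survive in situations where the unramified $\Z_p$-extension at $p$ is not swallowed by the base.
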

\proof We proceed as in the proof of Proposition \ref{bigp}. Here the proof  is different  from Proposition \ref{bigp} as the strict Selmer group and the usual Selmer group for $\mathcal A$ over $K_\infty$ may not coincide.  As before, we will prove for  $\mathcal X(\T_\mathcal R/K_\infty)$ only.

As in Proposition \ref{bigp}, it suffices to show $\mathcal X(\T_\mathcal R/F_\infty) [P_\xi] =0$ for every $\xi$. for $B = \mathcal A  ~ \text{or} ~ A_{f_\xi}$, recall from section \ref{sec2}, the strict Selmer group 
  $S'(B/K_\infty) \hookrightarrow S(B/K_\infty)$. Then  by  corollary \ref{seltm} and remark \ref{splzddm}, we deduce that all four groups $X(T_{f_\xi}/K_\infty)$,  $X'(T_{f_\xi}/K_\infty)$, $ \mathcal X(\mathcal T_\mathcal R/K_\infty)$ and $\mathcal X'(\mathcal T_\mathcal R/K_\infty)$ are finitely generated  torsion modules over their respective Iwasawa algebras $O_{f_\xi}[[\Gamma_K]]$ and $\mathcal R[[\Gamma_K]]$.
It follows that  the maps defining the Greenberg Selmer groups and strict Selmer groups over $K_\infty$ are surjective  (\cite[Theorem 4.10]{o}, \cite[Theorem 7.12]{hv}). In other words,  $\phi'^B_{K_\infty}$ is surjective for $B = \A $ or $B =A_{f_\xi}$ and we have the exact sequence
$$ 0 \lra S(B/K_\infty ) \lra H^1(K_S/K_\infty, B) \lra \underset{v \in S, v\nmid p}\prod J_v(B/K_\infty) \underset{ v\mid p}\prod J^p_v(B^-/K_\infty) ) \lra 0$$
Here $J_v(B/K_\infty) = (\text{Ind}_{G_v}^G H^1(K_{\infty, v_\infty}, B)^\vee)^\vee$ and $J^p_v(B^-/K_\infty) = (\text{Ind}_{G_v}^G {H^1(I_{ v_\infty}, B)^{G_{v_\infty}}}^\vee)^\vee.$
 Hence we get another exact sequence $$ 0 \lra S'(B/K_\infty) \lra S(B/K_\infty) \lra U_p(B) \lra 0$$ 
with $U_p(B): = (\text{Ind}^{\Gamma_K}_{{\Gamma_K}_v} H^1(G_{v_\infty}/I_{v_\infty}, (B^-)^{I_{v_\infty}})^\vee)^\vee $. Therefore, it suffices to show that $\mathcal X'(\T_\mathcal R/F_\infty)[P_\xi] =0$ and $U_p(\A)^\vee[P_\xi] =0$. Now, 
\begin{equation}\label{csts}
H_1(G_{v_\infty}/I_{v_\infty}, {\A^-}^\vee)[P_\xi] = H_1(G_{v_\infty}/I_{v_\infty}, {\T_\mathcal R^\dagger}^-)[P_\xi],
\end{equation} where ${\T_\mathcal R^\dagger}^-$ is a free $\mathcal R$ module of rank 1. As before, write $<\text{Fr}_{v_\infty}> : = G_{v_\infty}/I_{v_\infty}$. Then the module in \eqref{csts} is isomorphic to $({\T_\mathcal R^\dagger}^-[\text{Fr}_{v_\infty}])[P_\xi]$. But $({\T_\mathcal R^\dagger}^-[\text{Fr}_{v_\infty}])[P_\xi] \subset ({\T_\mathcal R^\dagger}^-)[P_\xi] =0$ as ${\T_\mathcal R^\dagger}^-$ is a free $\mathcal R$ module. Using this, it is plain from the definition of  $U_p(\A)$ that $U_p(\A)^\vee[P_\xi] =0$. 

\n We will show $\mathcal X'(\T_\mathcal R/K_\infty)[P_\xi] =0$ to complete the proof of the proposition.  Set
 $$W'_\xi : = \underset{v \in S, v\nmid p}{\prod} (\text{Ind}^{\Gamma_K}_{{\Gamma_K}_v} H^1(K_{\infty, v_\infty}, A_{f_\xi})^\vee)^\vee  \underset{ v\mid p}\prod (\text{Ind}^{\Gamma_K}_{{\Gamma_K}_v} H^1(K_{\infty, v_\infty}, A_{f_\xi}^-)^\vee)^\vee \quad \text{and}$$ 
 $$\mathcal W' : = \underset{v \in S, v\nmid p}{\prod} (\text{Ind}^{\Gamma_K}_{{\Gamma_K}_v} H^1(K_{\infty, v_\infty}, \A)^\vee)^\vee  \underset{ v\mid p}\prod (\text{Ind}^{\Gamma_K}_{{\Gamma_K}_v} H^1(K_{\infty, v_\infty}, \A^-)^\vee)^\vee \quad \text{and}$$ 
 These are the `local factors'  used in the definition of  $X'(T_{f_\xi}/K_\infty)$ and  $\mathcal X'(\T_\mathcal R/K_\infty)$ respectively.  Once again, as in proposition \ref{bigp}, this follows if we can show the map $W'_\xi \lra \mathcal W'[P_\xi]$ is surjective. Recall  as $A[P_\xi] \cong A_{f_\xi}$. But for every prime $v_\infty$ of $K_\infty$ lying above a prime $S$ in $K$ not above $p$,  the map $H^1(K_{\infty, v_\infty}, \A[P_\xi]) \lra H^1(K_{\infty, v_\infty}, \A)[P_\xi]$  is surjective by Kummer theory. Similarly, for every $v_\infty \mid p$, $H^1(K_{\infty, v_\infty}, A^-_{f_\xi}) \lra H^1(K_{\infty, v_\infty}, \A^-)[P_\xi]$ is surjective. From these, it follows that $W'_\xi \lra \mathcal W'[P_\xi]$ is indeed surjective. This completes the proof of the proposition. \qed

\bigskip

\n We now state our main  theorem in  the  $\Z_p^2$ extension case. 
\begin{theorem}\label{mnmm}
Let the notation be as before.  Let $K$ be an imaginary quadratic field  and  $K \subset K_\cyc \subset K_\infty$ be the unique $\Z_p^{\oplus 2 }$ extension of $K$. Assume 
\begin{enumerate}
\item $(N_\mathcal R, D_K) = (p, D_K) =1$ i.e. $p$ splits in $K$ and the tame conductor $N_\RR$ associated to the branch of the Hida family is coprime to the discriminant of $K$.
\item {\bf (Irr)}  : The residual representation $\bar{\rho}_\mathcal R$ is absolutely irreducible as $G_\Q$-module.
\item {\bf (Dist)} : The characters appearing on the diagonal of the local residual representation $\bar{\rho}_\mathcal R \mid_{G_p}$ are distinct $(\text{mod } \mathfrak m)$.
\item  $\mathcal R$ is a power series ring.
\end{enumerate}
Then the functional equation holds for $\x(\mathcal T_\RR/K_\infty)$ i.e. as an ideal in $\mathcal R[[\Gamma_K]]$,
$$Ch_{\RR[[\Gamma_K]]}({\x(\T_\mathcal R/K_\infty)} )=Ch_{\RR[[\Gamma_K]]}({ \x(\T_\mathcal R^*/K_\infty)^\iota})$$ 

\end{theorem}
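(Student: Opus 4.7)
The plan is to follow the three-step strategy used in the proof of Theorem \ref{mnm}, adapted to the $\Z_p^2$-extension setting. First I would invoke Remark \ref{splzddm} to record that both $\x(\T_\mathcal R/K_\infty)$ and $\x(\T^*_\mathcal R/K_\infty)$ are finitely generated torsion $\mathcal R[[\Gamma_K]]$-modules, so that their characteristic ideals are well defined. I would then select an infinite subset $\mathfrak X(\mathcal R)' \subset \mathfrak X(\mathcal R)$ of arithmetic points $\xi$ such that (a) $f_\xi$ is not exceptional in the sense preceding Theorem \ref{fefibrem}, and (b) the kernels and cokernels of the specialization maps $\beta_\xi^\vee$ of Proposition \ref{splm} and $\beta^{*\vee}_\xi$ of Remark \ref{splzddm} are pseudonull $O_{f_\xi}[[\Gamma_K]]$-modules. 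Each condition excludes only finitely many arithmetic points, so $\mathfrak X(\mathcal R)'$ is infinite, and Lemma \ref{IJ} lets me arrange the $\xi \in \mathfrak X(\mathcal R)'$ compatibly with the inductive step of the lifting argument.

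Next, for each $\xi \in \mathfrak X(\mathcal R)'$, I would combine the pseudo-isomorphisms of Proposition \ref{splm} and Remark \ref{splzddm} (which preserve characteristic ideals since kernel and cokernel are pseudonull) with Proposition \ref{bigpm} (which ensures that the maximal pseudonull submodules behave well under specialization at $P_\xi$) and with the fiber-wise functional equation of Theorem \ref{fefibrem} to obtain the chain of equalities
$$
Ch\bigl(\x(\T_\mathcal R/K_\infty)/P_\xi\bigr) = Ch(X(T_{f_\xi}/K_\infty)) = Ch(X(T^*_{f_\xi}/K_\infty)^\iota) = Ch\bigl(\x(\T^*_\mathcal R/K_\infty)^\iota/P_\xi\bigr)
$$
as ideals in $O_{f_\xi}[[\Gamma_K]]$, where $P_\xi$ acts through $\mathcal R \hookrightarrow \mathcal R[[\Gamma_K]]$.

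Finally, I would lift these fiber-wise equalities to an equality of ideals in $\mathcal R[[\Gamma_K]]$ via a multi-variable variant of Proposition \ref{lift}. Since $\mathcal R \cong O[[W]]$ and $\Gamma_K \cong \Z_p^{\oplus 2}$, one has $\mathcal R[[\Gamma_K]] \cong O[[W, T_1, T_2]]$, so the one-$T$-variable Proposition \ref{lift} must be extended to the two-$T$-variable setting $R_O = O[[W, T_1, T_2]]$. The inductive scheme carries over: the primes $l_i = P_{\xi_i}$ involve only the $W$-variables, and the Weierstrass-type reduction of the characteristic element can be applied successively in $T_1$ and then $T_2$, viewing $O[[W, T_1, T_2]] = O[[W, T_1]][[T_2]]$. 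The main obstacle will be precisely this multi-variable extension of the lifting lemma --- although it is expected to be a routine bookkeeping generalization of the one-variable Weierstrass preparation argument of \cite{o3}, each reduction step must be checked carefully to confirm that the reduced characteristic elements still match modulo each $l_i$. A secondary technical point is to verify cleanly that Proposition \ref{bigpm} together with the pseudonull specialization from Proposition \ref{splm} really identifies $Ch(\x(\T_\mathcal R/K_\infty)/P_\xi)$ with $Ch(X(T_{f_\xi}/K_\infty))$, rather than giving merely an inclusion.
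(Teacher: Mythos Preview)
Your proposal is essentially the paper's proof, with one over-caution and one unnecessary embellishment. The paper simply invokes Proposition \ref{lift} as-is: it does not treat the two-variable $\Gamma_K$ as an obstacle, because the proof of Proposition \ref{lift} (in particular the Weierstrass-preparation and flatness step borrowed from \cite{o3}) goes through verbatim when $T$ is replaced by $(T_1,T_2)$ --- after a linear change of coordinates one still obtains $R_O/h$ flat over $O[[W]]$, and the inverse-limit argument is unchanged. So what you flag as ``the main obstacle'' is not treated as one in the paper. Separately, you invoke Lemma \ref{IJ} to organize the $l_i$'s, but here $\mathcal R\cong O[[W]]$ is a one-variable power series ring, so each $P_\xi$ is principal and the inductive reduction on the generators of $l_i$ is vacuous; the paper accordingly omits Lemma \ref{IJ} in this proof. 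Otherwise your chain of equalities via Proposition \ref{splm}, Remark \ref{splzddm}, Proposition \ref{bigpm}, and Theorem \ref{fefibrem} is exactly the paper's argument.
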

\proof 
By remark \ref{splzddm},  we see that $ \x(\T_\mathcal R/K_\infty)$ and $\x(\T_\mathcal R^*/K_\infty)$ are finitely generated torsion $\RR[[\Gamma_K]]$ module. Let $S_1$ be the finite subset of $\mathfrak X(\RR)$ consisting of those $\xi$ for which  $f_\xi$ is exceptional.  Let $S_2$ be the finite subset of $\mathfrak X(\RR)$ for which the map $\beta_\xi $ in Proposition \ref{splm} is not a pseudo-isomorphism. Similarly $S_3$ be the finite subset of $\mathfrak X(\RR)$ for which the map $\beta^*_\xi $ in the remark  \ref{splzddm} is not a pseudo-isomorphism. Let $\mathfrak S = S_1 \cup S_2 \cup S_3 $ be the finite  subset  of $\mathfrak X(\RR)$. Then by Proposition \ref{lift},  to prove the theorem, it suffices to show that for all $\xi \in  \mathfrak X(\RR) \setminus \mathfrak S$, 
\begin{equation}\label{last1}
Ch_{O_{f_\xi}[[\Gamma_K]]}({\x(\T_\mathcal R/K_\infty)}/{P_\xi {\x(\T_\mathcal R/K_\infty)}} )=Ch_{O_{f_\xi}[[\Gamma_K]]}({ \x(\T_\mathcal R^*/K_\infty)^\iota}/{P_\xi {\x(\T_\mathcal R/K_\infty)^\iota}}).
\end{equation}
 Using  Proposition \ref{splm} and  remark  \ref{splzddm}, \eqref{last1} is in  turn equivalent to showing  for all $\xi \in  \mathfrak X(\RR) \setminus \mathfrak S$, 
\begin{equation}\label{last}
Ch_{O_{f_\xi}[[\Gamma_K]]}(X(\T_{f_\xi}/K_\infty))=Ch_{O_{f_\xi}[[\Gamma_K]]}(X(\T_{f_\xi}^*/K_\infty)^\iota),
\end{equation}
which follows from Theorem \ref{fefibrem}. This completes the proof of the proposition. \qed

\begin{rem}

Our proof of    Theorem \ref{mnm} (respectively Theorem \ref{mnmm})  covers the case when $\x(\T_\mathcal R/F_\cyc)$ (respectively $\x(\T_\mathcal R/K_\infty)$) have non zero $\mathcal R[[\Gamma]]$  (respectively $\mathcal R[[\Gamma_K]]$) pseudonull submodules.  We also allow the Selmer groups $X(T_{f_{\xi}}/F_\cyc)$, $X(T_{f_{\xi}}/K_\cyc)$, $\xi \in \mathfrak X(\mathcal R)$ to have positive $\mu$-invariant. Our proof of Theorem \ref{mnmm} works for both CM or non-CM Hida family.
\end{rem}
\begin{rem}
 Following Mazur's conjecture, the crucial assumption {\bf (Tor)} for Hilbert modular forms in Theorem \ref{mnm}, is expected to be always true but has not been proven except in a  few special cases.  All the other conditions in Theorem \ref{mnm} (respectively all the conditions in Theorem \ref{mnmm}) are   satisfied in many cases.
\end{rem}
\begin{rem}
Keeping Leopoldt conjecture for a totally real field $F$ in mind, we do not pursue the $\Z_p^2$ extension case in case of Hilbert modular forms. 
\end{rem}

\end{document}